\newlength{\hatchspread}
\newlength{\hatchthickness}
\newlength{\hatchshift}
\newcommand{\hatchcolor}{}
\tikzset{hatchspread/.code={\setlength{\hatchspread}{#1}},
         hatchthickness/.code={\setlength{\hatchthickness}{#1}},
         hatchshift/.code={\setlength{\hatchshift}{#1}},
         hatchcolor/.code={\renewcommand{\hatchcolor}{#1}}}
\tikzset{hatchspread=2pt,
         hatchthickness=0.2pt,
         hatchshift=0pt,
         hatchcolor=black}
\theoremstyle{plain}
\newtheorem{thm}{Theorem}[section]
\newtheorem{cor}[thm]{Corollary}
\newtheorem{lem}[thm]{Lemma} 
\newtheorem{prop}[thm]{Proposition}
\theoremstyle{definition}
\newtheorem{defi}[thm]{Definition}
\theoremstyle{remark}
\newtheorem{rem}[thm]{Remark}
\numberwithin{equation}{section}
\newtheorem{ex}[thm]{Example}
\definecolor{aquamarine}{rgb}{0.5, 1.0, 0.83}
\definecolor{bananamania}{rgb}{0.98, 0.91, 0.71}
\definecolor{blizzardblue}{rgb}{0.67, 0.9, 0.93}
\definecolor{corn}{rgb}{0.98, 0.93, 0.36}
\definecolor{lightgreen}{rgb}{0.76, 0.98, 0.76}
\definecolor{lightcoral}{rgb}{0.97, 0.75, 0.75}
\definecolor{lightblue}{rgb}{0.68, 0.85, 0.9}
\newcommand{\haut}{\operatorname{ht}}
\newcommand{\lgw}{\longrightarrow}
\newcommand{\lgm}{\longmapsto}
\newcommand{\ovl}{\overline}
\newcommand{\F}{\mathbb F}
\newcommand{\Ra}{\operatorname{Raf}^-}
\newcommand{\Raf}{\operatorname{Raf}^+}
\newcommand{\T}{\operatorname{\mathsf T}}
\newcommand{\Ker}{\operatorname{Ker}}
\newcommand{\Inn}{\operatorname{Inn}}
\newcommand{\Out}{\operatorname{Out}}
\newcommand{\Gal}{\operatorname{Gal}}
\newcommand{\Vect}{\operatorname{Vect}}
\newcommand{\G}{\Gamma}
\newcommand{\la}{\lambda}
\renewcommand{\O}{\mathcal{O}}
\renewcommand{\L}{\mathbb{L}}
\newcommand{\Aut}{\operatorname{Aut}}
\newcommand{\Z}{\mathbb{Z}}
\newcommand{\rr}{\operatorname{rat.rank}}
\newcommand{\tdr}{\operatorname{tr.deg}_\k}
\renewcommand{\k}{\Bbbk}
\newcommand{\Mat}{\operatorname{Mat}}
\newcommand{\R}{\mathbb{R}}
\newcommand{\U}{\mathcal U}
\newcommand{\V}{\mathcal V}
\newcommand{\LEQ}{\,\boldsymbol{\leq}\,}
\newcommand{\m}{\mathfrak m}
\newcommand{\SEQ}{\,\boldsymbol{<}\,}
\newcommand{\K}{\mathbb{K}}
\newcommand{\Tor}{\operatorname{Tor}}
\renewcommand{\SS}{\mathbb{S}}
\newcommand{\N}{\mathbb{N}}
\newcommand{\he}{\operatorname{ht}}
\newcommand{\Q}{\mathbb{Q}}
\newcommand{\p}{\mathfrak{p}}
\renewcommand{\lg}{\langle}
\newcommand{\s}{\sigma}
\newcommand{\rg}{\rangle}
\newcommand{\q}{\mathfrak{q}}
\renewcommand{\a}{\alpha}
\newcommand{\rk}{\operatorname{rank}}
\renewcommand{\phi}{\varphi}
\renewcommand{\d}{\delta}
\newcommand{\ZR}{\operatorname{ZR}}
\newcommand{\Ord}{\operatorname{Ord}}
\newcommand{\e}{\varepsilon}
\begin{document}
\baselineskip=13pt
%
\title{Preordered groups  and valued fields}

\author{Julie Decaup}
\email{julie.decaup@im.unam.mx}
\address{Instituto de Matem\'aticas, Universidad Nacional Aut\'onoma de M\'exico (UNAM), Mexico}

\author{Guillaume Rond}
\email{guillaume.rond@univ-amu.fr}
\address{Aix-Marseille Universit\'e, CNRS, Centrale Marseille, I2M, UMR 7373, 13453 Marseille, France}

\subjclass[2010]{ 06A12, 06F15, 12J20, 13A18, 20F60, 20M10, 22A26,  54E45 }

\keywords{preorder on a group, valuation, tree}
\thanks{
The second author is deeply grateful to the UMI LASOL of  CNRS where this project has been carried out.}

\begin{abstract}
We study  algebraic, combinatorial and topological properties of the set of preorders on a group, and the set of valuations on a field. We show strong analogies between these two kinds of sets and develop a dictionary for these ones. Among the results  we make a detailed study of the set of preorders on $\Z^n$. We also prove that the set of valuations on a countable field of transcendence degree at least 2 is an ultrametric Cantor set.
\end{abstract}
\maketitle
\tableofcontents
\section{Introduction}
The purpose of this paper is to investigate some algebraic, combinatorial and topological properties of spaces of preorders on a given group, and spaces of valuations on a given field. In particular we show that these spaces shares very strong similarities, and we develop a dictionary between preorders on groups and valuations on fields.\\
\\
Historically, the study of orderable groups has been developed since the  end of the nineteen century for their importance in algebraic topology. But the first study of the topological properties of the set of orders on a group is due to Kuroda in the case $G=\Z^n$ \cite{K}, and to Sikora in the general case \cite{S}. Here, an order means a total order that is left-invariant. In his paper, Sikora introduced a topology on the set of orders on a group, and showed that this topology is a metric topology in the case of countable groups. For a countable group $G$, Sikora proved that the space of left-invariant orders (denoted by $\Ord_l(G)$) on $G$ is a compact metric space, and shows that this is even a Cantor set when $G=\Z^n$. Subsequently, several authors proved that $\Ord_l(G)$ is a Cantor set for several examples of groups $G$.  \\
The first study (to our knowledge) of the space of preorders on a group $G$ is due to Ewald and Ishida \cite{EI} for $G=\Z^n$. Let us mention that a preorder satisfies all the properties of an order except that it may not be antisymmetric. In their paper, they introduce a topology of the set of preorders on $\Z^n$ (extending the one of Kuroda), and show the compacity of this set. \\
\\
On the other hand, Zariski introduced a topology on the set of valuations of a field (called the Zariski-Riemann space), proved its compacity and used this in order to deduce the  resolution of singularities in dimension two from the local uniformization theorem (see \cite{Za1} and \cite{Za}). The study of valuation theory has been revived in the last twenty years for its applications in commutative algebra and algebraic geometry (see \cite{Va} or \cite{HS} for example).\\
\\
In this paper we begin by studying preorders on groups. First, we show that the set of left-invariant preorders on a group $G$ (denoted by $\ZR_l(G)$) is equipped with a natural order that makes $\ZR_l(G)$ a join-semilattice (see Theorem \ref{cor_raf_toset}) and even a rooted graph under some assumptions on $G$ (see Proposition \ref{tree} and Corollary \ref{tree2}). 
Then we introduce and investigate three topologies on $\ZR_l(G)$: the Zariski topology, the Inverse topology and the Patch topology. These correspond to the topologies having the same name on the set of valuations on a given field and introduced by Zariski (see \cite{SZ} or \cite{H}). Moreover the Patch topology coincides with the Chabauty topology on the sets of submonoids of $G$, where $G$ is endowed with the cofinite topology. We prove that $\ZR_l(G)$ is compact for these three topologies (using the same argument as Zariski for the case of spaces of valuations), see Theorem \ref{compacity_preorders}. The first two topologies are not metric, but we show, following Sikora, that  the last one is ultrametric when $G$ is countable (see Proposition \ref{prop_metric}). Let us mention that these three topologies coincide on the subset of orders and, therefore, correspond to the topology introduced by Sikora.  Then we study in more details the case of abelian groups, and we make a detailed study of $\ZR(\Z^n)$: we show that this is not a Cantor set in general, but that it contains infinitely many explicit Cantor subsets when $n\geq 2$, generalizing the result of Sikora (see Theorem \ref{main1}). In fact the set $\ZR(\Z^n)$ can be seen as a rooted graph on which acts $\Aut(\Q^n)$. More precisely $\ZR(\Z^n)$ can be seen as follows: we consider the rooted graph $T_0$ that has one root $\leq_\emptyset$ and a set of vertices in bijection with the sphere of dimension $n-1$, and the edges are the pairs $(\leq_\emptyset,\preceq)$ where $\preceq$ runs over the other vertices. Then $\ZR(\Z^n)$
is obtained by gluing $T_0$ with infinitely many copies of the $\ZR(\Z^d)$ for $d<n$. From this we deduce an effective version of Hausdorff-Alexandroff Theorem for the spheres of any dimension (Proposition \ref{HA}). The case of non-commutative groups is much more difficult in general. We provide two examples: the description of $\ZR(G)$ when $G$ is the fundamental group of the Klein bottle, and we give an example of a torsion free group $G$ for which $\ZR(G)$ is trivial.\\
Then we develop the analogy with the set of valuations on a field $\K$. We denote by $\ZR(\K/\k)$ the set of valuations on $\K$ that are trivial on the subfield $\k$. Here again, $\ZR(\K/\k)$ is a join-semilattice (Proposition \ref{jsl_val}).  When $\K$ is a countable field, we show in an explicit way that the Patch topology is an ultrametric topology (see Theorem \ref{patch_metric}). The main difference with $\ZR_l(G)$ for a group $G$, is that the subfield $\k$ plays the role of the trivial subgroup $\{1\}$, but $\k$ is not finite in general. Therefore several difficulties appear. For example, the Zariski, Inverse and Patch topologies do not coincide in general on the set of rational valuations, but they do when $\k$ is a finite field (see Proposition \ref{refi_top2}). Then, by analogy with the case of orders and preorders on $\Z^n$, we investigate when $\ZR(\K/\k)$ (or some subsets of it) are Cantor sets. First we prove an analogue of the result of Sikora: the set of rational valuations on $\k(x_1,\ldots, x_n)$ vanishing on $\k$ (when $\k$ is a finite field), is a Cantor set for the Zariski topology (see Theorem \ref{cantorset1}). When $\k$ is not finite, this set is unfortunately not closed, therefore not compact. But we prove  that $\ZR(\K/\k)$ is a Cantor set for the Patch topology when $\k\lgw \K$ is a finitely generated field extension of transcendence degree at least 2 and $\k$ is at most countable (Theorem \ref{cantorset2}).\\
The dictionary between preorders on a group $G$ and valuations on a field $\K$ can be summarized in the following table (the corresponding  objects will be introduced all along the paper):


\begin{table}[h]
   \centering
\begin{tabular}{|c||c|}
  \hline

Group $G$ & Field $\K$\\
  \hline
monoid $S$ & Ring $R$\\  
  \hline
  Preorder $\preceq$ & Valuation $\nu$ \\
  \hline
 Preorder monoid $V_\preceq$ & $\quad$ Valuation ring $V_\nu\quad$\\
  \hline
 maximal ideal $\m_\preceq$ & maximal ideal $\m_\nu$\\
  \hline
  $\rk(\preceq)$ & $\rk(\nu)$\\
  \hline
  Residue group  $G_\preceq$ & Residue field $\k_\nu$\\
  \hline
  $\deg(\preceq)$ & $\tdr(\k_\nu)$ \\
  \hline
  $\O_u$  & $\O(x)$\\
  \hline
  $\U_u$ & $\U(x)$\\
  \hline

\end{tabular}
\end{table}

Let us mention that this analogy  has been emphasized in the case  of preorders on $\Z^n$  and valuations on $\k(x_1,\ldots, x_n)$ in \cite{EI}, where the authors extend the Zariski topology to the set of preorders on $\Z^n$ and show its compacity, and in \cite{T} where the author provides a new proof of the fact that the set of orders on $\Z^n$ is a Cantor set. This work has been motivated by our previous work where we used in an essential way the compacity of $\ZR(\Z^n)$ \cite{ADR}.

\section{The Zariski-Riemann space of preorders}
\subsection{Generalities}
\begin{defi}\label{preorder}
Let $G$ be a group. We denote by $\ZR_l(G)$ the set of left-invariant preorders on $G$, i.e. the set of binary relations $\preceq$ on $G$ such that
\begin{itemize}
\item[i)] $\forall u, v\in G$, $u\preceq v$ or $v\preceq u$,
\item[ii)] $\forall u,v,w\in G$, $u\preceq v$ and $v\preceq w$ implies $u\preceq w$,
\item[iii)] (\emph{left invariance}) $\forall u,v,w\in G$, $u\preceq v$ implies $wu\preceq wv$.
\end{itemize}
In the same way, we define right-invariant preorders whose set is denoted by $\ZR_r(G)$. The set of preorders that are bi-invariant, that is left and right-invariant, is denoted by $\ZR(G)$.
The subset of orders of $\ZR_*(G)$  is denoted by $\Ord_*(G)$ for $*=l$, $r$ or $ \emptyset$.\\
The trivial preorder, i.e. the unique preorder $\preceq$ such that $u\preceq v$ for every $u$, $v\in G$ is denoted by $\leq_\emptyset$.
\end{defi}

\begin{rem}
If $G$ is an abelian group then $\ZR(G)=\ZR_l(G)=\ZR_r(G)$.
\end{rem}

\begin{rem}
There is a bijection between $\ZR_l(G)$ and $\ZR_r(G)$ defined as follows:\\
For $\preceq\in\ZR_l(G)$ we define the right-invariant preorder $\preceq'$ by
$$\forall u,v\in G,\ u\preceq' v\Longleftrightarrow v^{-1}\preceq u^{-1}.$$
So from now on, we will no longer consider right-invariant preorders.
\end{rem}

\begin{defi}
Let $G$ be a group.
For $\preceq\in\ZR_*(G)$ and $u$, $v\in G$, we write $u\prec v$ if 
$$u\preceq v\text{ and }\neg(v\preceq u).$$
\end{defi}

\begin{defi}\label{def_equiv}
Let $G$ be a group.
Let $\preceq\in \ZR_*(G)$. We define a congruence relation $\sim_\preceq$ as follows: 
$$u\sim_\preceq v \text{ if } u\preceq v \text{ and } v\preceq u.$$
This congruence relation is compatible with the group law if $\preceq$ is bi-invariant. In this case the quotient $G/\sim_\preceq$ is a group denoted by $G^\preceq$ and $\preceq$ induces in an obvious way an order on $G^\preceq$ still denoted by $\preceq$.

\end{defi}

\begin{lem}\label{lem1}
Let $G$ be a group.
Let $a$, $b$, $c$, $d\in G$ with $a\preceq b$ and $c\preceq d$.
\begin{enumerate}
\item If $\preceq\in\ZR(G)$ then $ac\preceq bd$.
\item If $\preceq\in\ZR_l(S)$, then $b^{-1}ad^{-1}c\preceq 1.$
\end{enumerate}
If we assume moreover that $a\prec b$ or $c\prec d$, then we obtain strict inequalities in both cases.
\end{lem}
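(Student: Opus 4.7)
The proof is a direct manipulation of the axioms (i), (ii), (iii) of Definition \ref{preorder}. The plan is to split into the two statements and, in each case, begin by normalising one side of the inequalities to $1$ (for (2)) or chaining two one-sided multiplications (for (1)), then close with transitivity. The strict versions will follow from a small auxiliary observation recorded below.

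For (1), assuming $\preceq$ is bi-invariant, I would first apply right-invariance to $a\preceq b$ with the element $c$ to obtain $ac\preceq bc$, and then apply left-invariance to $c\preceq d$ with the element $b$ to obtain $bc\preceq bd$. Transitivity (axiom ii) then gives $ac\preceq bd$.

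For (2), assuming only left-invariance, I would left-multiply $a\preceq b$ by $b^{-1}$ to get $b^{-1}a\preceq 1$, and left-multiply $c\preceq d$ by $d^{-1}$ to get $d^{-1}c\preceq 1$. Then left-multiplying the latter by $b^{-1}a$ gives $b^{-1}ad^{-1}c\preceq b^{-1}a$, and transitivity with $b^{-1}a\preceq 1$ yields $b^{-1}ad^{-1}c\preceq 1$.

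The only slightly non-routine point is the strict case, and this will be the main (minor) obstacle to spell out cleanly. The auxiliary fact I would use is: if $\preceq$ is left-invariant and $u\prec v$ then $wu\prec wv$ for every $w\in G$. Indeed $wu\preceq wv$ is just left-invariance; and if we also had $wv\preceq wu$, then left-multiplying by $w^{-1}$ would give $v\preceq u$, contradicting $u\prec v$. The analogous statement holds for right-invariance when $\preceq$ is bi-invariant. With this observation in hand, in (1) the strict hypothesis $a\prec b$ upgrades $ac\preceq bc$ to $ac\prec bc$, and the chain with $bc\preceq bd$ gives $ac\prec bd$ (and symmetrically if $c\prec d$); in (2), the strict hypothesis $a\prec b$ (resp. $c\prec d$) upgrades $b^{-1}a\preceq 1$ (resp. $d^{-1}c\preceq 1$) to a strict inequality, and one checks as above that neither link of the chain $b^{-1}ad^{-1}c\preceq b^{-1}a\preceq 1$ can be an equality without contradicting the strict step, so $b^{-1}ad^{-1}c\prec 1$. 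This finishes the proof.
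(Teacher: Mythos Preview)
Your proof is correct and follows essentially the same approach as the paper: the chain $ac\preceq bc\preceq bd$ for (1), the normalisation $b^{-1}a\preceq 1$, $d^{-1}c\preceq 1$ and then $b^{-1}ad^{-1}c\preceq b^{-1}a\preceq 1$ for (2), and the auxiliary fact that left-multiplication preserves strict inequality for the strict case. Your write-up is in fact slightly more detailed than the paper's, since you justify the auxiliary fact explicitly.
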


\begin{proof}
For the first inequality just remark that $ac\preceq bc\preceq bd$.\\
Now if  $\preceq\in\ZR_l(G)$ we have $b^{-1}a\preceq 1$ and $d^{-1}c\preceq 1$.
Therefore we have $b^{-1}ad^{-1}c\preceq b^{-1}a\preceq 1$.
\\
Let us remark that if $a\prec b$, $\preceq\in \ZR_l(G)$, then $ca\prec cb$ for every $c\in G$. Thus the cases of strict inequalities are proved as the previous cases.
\end{proof}

\begin{rem}
Let $G$ be a group.
If $\Tor(G)\neq\{1\}$ then $\Ord_*(G)=\emptyset$. Indeed let $u\in\Tor(G)$  be of order $n\geq 1$ and let $\preceq\in\Ord_*(G)$. Assume $1\preceq u$. Thus
$$1\preceq u\preceq u^2\preceq \cdots\preceq u^n=1.$$
Since $\preceq$ is an order, we have $u=1$.  The same is true if we assume $u\preceq 1$. Therefore $\Tor(G)=\{1\}$.
\end{rem}

\subsection{Preorder monoid}
Let $G$ be a group and $\preceq\in\ZR_l(G)$. We set
$$V_\preceq=\{u\in G\mid u\succeq 1\},$$
$$\m_\preceq:=\{u\in G\mid u\succ 1\}.$$
It is straightforward to check that $V_\preceq$ is a monoid, and $\m_\preceq$ an ideal of $V_\preceq$ (by Lemma \ref{lem1}). Moreover $V_\preceq$ is a preorder monoid:
\begin{defi}\label{semigroup_valuation}
Let $G$ be a group and $V$ be a sub monoid of $G$. We say that $V$ is a \emph{preorder monoid} if
\begin{equation}\label{semigp_preorder}\forall u\in G,\ \ u\in V \text{ or } u^{-1}\in V.\end{equation}
\end{defi}

On the other hand, if $V$ is a preorder monoid, then $V=V_\preceq$ where $\preceq\in \ZR_l(G)$ is defined as follows: for every $u$, $v\in G$, we set $u\succeq v$ if and only if $v^{-1}u\in V$. Since $V$ satisfies \eqref{semigp_preorder}, Definition \ref{preorder} i) is satisfied. Since $V$ is a monoid, Definition \ref{preorder} ii) is satisfied, and Definition \ref{preorder} iii) is automatically satisfied.\\
\\
Moreover $\preceq$ is bi-invariant if and only if $V_\preceq$ is a normal sub monoid of $G$, that is
$$\forall u\in V_\preceq, v\in G,\ \ v^{-1}uv\in V_\preceq.$$
\\
We remark that $\m_\preceq$ is the unique maximal ideal of $V_\preceq$ since  the inverse of every element of $ V_{\preceq}\setminus \m_\preceq$ is  in $V_{\preceq}$.

\begin{defi}
Let $G$ be a group and $\preceq\in \ZR_l(G)$. The monoid $V_\preceq$ is called the \emph{preorder monoid associated to $\preceq$}, and $\m_\preceq$ is its \emph{maximal ideal}.
\end{defi}


\subsection{Ordering of the set of orders}
\begin{defi}
Given two preorders $\preceq_1$, $\preceq_2\in\ZR_l(G)$ where $G$ is a group, we say that $\preceq_2$ refines $\preceq_1$ if
$$\forall u,v\in G, u\preceq_2 v\Longrightarrow u\preceq_1 v.$$
\end{defi}

\begin{rem}\label{rem1}
Let $\preceq_1$, $\preceq_2\in\ZR_l(G)$. If $\preceq_1$ refines $\preceq_2$ and $\preceq_2$ refines $\preceq_1$ then $\preceq_1=\preceq_2$.
\end{rem}

\begin{rem}\label{contr}
By contraposition, $\preceq_2$ refines $\preceq_1$ if and only if 
$$\forall u,v\in G, u\prec_1 v\Longrightarrow u\prec_2 v.$$
\end{rem}


\begin{defi} Let $G$ be a group.
We define an order  $\boldsymbol{\leq}$ on $\ZR_l(G)$ as follows: for every preorders $\preceq_1$, $\preceq_2\in\ZR_l(G)$ we have
$$\preceq_1\ \boldsymbol{\leq} \ \preceq_2$$
if $\preceq_2$ is a refinement of $\preceq_1$. By Remark \ref{rem1} it is straightforward to check that $\boldsymbol{\leq}$ is an order.
\end{defi}

\begin{lem}
Let $G$ be a group. Given $\preceq$, $\preceq'\in\ZR_l(G)$, the following properties are equivalent:
\begin{itemize}
\item[i)] $\preceq\LEQ \preceq'$
\item[ii)] $V_{\preceq'}\subset V_\preceq$
\item[iii)] $\m_{\preceq}\subset \m_{\preceq'}$
\end{itemize}
\end{lem}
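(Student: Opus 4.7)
The plan is to prove the three implications (i)$\Rightarrow$(ii)$\Rightarrow$(iii)$\Rightarrow$(ii), using the correspondence between a left-invariant preorder and its preorder monoid established earlier in the subsection, namely that $u\preceq v$ if and only if $u^{-1}v\in V_\preceq$.

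For (i)$\Rightarrow$(ii): I take $u\in V_{\preceq'}$, which by definition means $1\preceq' u$. Since $\preceq'$ refines $\preceq$, this gives $1\preceq u$, i.e.\ $u\in V_\preceq$. Conversely, for (ii)$\Rightarrow$(i), I use the above characterization: if $u\preceq' v$, then $u^{-1}v\in V_{\preceq'}\subset V_\preceq$, hence $u\preceq v$, which is exactly the refinement property (i).

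For (ii)$\Rightarrow$(iii), I take $u\in\m_\preceq$, so $u\in V_\preceq$ and $u^{-1}\notin V_\preceq$ (since otherwise $u\sim_\preceq 1$, contradicting $u\succ 1$). By the contrapositive of the inclusion $V_{\preceq'}\subset V_\preceq$, I get $u^{-1}\notin V_{\preceq'}$. Now using the defining property of a preorder monoid, at least one of $u, u^{-1}$ lies in $V_{\preceq'}$, so necessarily $u\in V_{\preceq'}$. Combining $u\in V_{\preceq'}$ with $u^{-1}\notin V_{\preceq'}$ gives $u\in\m_{\preceq'}$.

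For (iii)$\Rightarrow$(ii), I argue by contradiction: suppose $u\in V_{\preceq'}$ but $u\notin V_\preceq$. Then $u^{-1}\succ 1$ (the preorder monoid property forces $u^{-1}\in V_\preceq$, and $u\notin V_\preceq$ prevents $u\sim_\preceq 1$), so $u^{-1}\in\m_\preceq\subset\m_{\preceq'}$. Thus $u^{-1}\succ' 1$, which together with $u\succeq' 1$ is a contradiction. The argument is essentially a book-keeping of the definitions; the only subtle point, and the one I would take care to spell out, is the use of the dichotomy $u\in V_\preceq$ or $u^{-1}\in V_\preceq$ from Definition~\ref{semigroup_valuation} to pass from containment of $V$'s to containment of $\m$'s and back.
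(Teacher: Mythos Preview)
Your proof is correct and essentially the same as the paper's: both unwind the definitions of $V_\preceq$, $\m_\preceq$, and refinement. The only differences are organizational. The paper proves the clean cycle (i)$\Rightarrow$(ii)$\Rightarrow$(iii)$\Rightarrow$(i), using the contrapositive form of refinement (Remark~\ref{contr}) for the last step; you instead prove (i)$\Leftrightarrow$(ii) and (ii)$\Leftrightarrow$(iii). Note that your stated plan ``(i)$\Rightarrow$(ii)$\Rightarrow$(iii)$\Rightarrow$(ii)'' contains a typo and does not close the loop, but the body of your argument does, since you also supply (ii)$\Rightarrow$(i).
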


\begin{proof}
Assume i) holds, and let $u\in V_{\preceq'}$, that is $u\succeq' 1$. Then $u\succeq 1$ and $u\in V_\preceq$. Thus ii) holds.\\
Now assume that ii) holds, and let $u\in\m_\preceq$, that is $u\succ 1$. Therefore $u^{-1}\prec 1$, that is $u^{-1}\notin V_{\preceq}$. Thus $u^{-1}\notin V_{\preceq'}$ and $u\in \m_{\preceq'}$. Thus iii) holds. \\
Finally, assume iii) holds. Let $u$, $v\in G$ with $u\prec v$, that is $u^{-1}v\succ 1$. Thus, $u^{-1}v\in \m_\preceq\subset\m_{\preceq'}$. Therefore $u\prec' v$, and $\preceq\LEQ\preceq'$.
\end{proof}
\begin{lem}
Let $G$ be a group and let $E\subset \ZR_l(G)$ be non empty. The set
$$A_E:=\left\{S\text{ sub monoid of } G\mid \bigcup_{\preceq\in E} V_\preceq\subset S\right\}$$
is non empty and contains a minimal element. This minimal element is a preorder monoid, and its associated preorder is denoted by $\preceq_{\inf E}$.\\
Moreover, if $E\subset \ZR(G)$, then $\preceq_{\inf E}$ is bi-invariant.
\end{lem}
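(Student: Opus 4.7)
The plan is to identify the minimal element as the submonoid generated by $\bigcup_{\preceq\in E} V_\preceq$, which, because each $V_\preceq$ is itself a preorder monoid, will automatically satisfy the dichotomy required of a preorder monoid.

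First I would observe that $G$ itself belongs to $A_E$, so $A_E$ is non-empty. Since the intersection of any family of submonoids of $G$ is again a submonoid, the set
$$S^*:=\bigcap_{S\in A_E} S$$
is a submonoid of $G$ containing $\bigcup_{\preceq\in E} V_\preceq$, and therefore $S^*\in A_E$. By construction $S^*$ is contained in every element of $A_E$, so it is the (unique) minimum element of $A_E$.

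The key step is to verify that $S^*$ is a preorder monoid, i.e.\ that for every $u\in G$, either $u\in S^*$ or $u^{-1}\in S^*$. This is where I would use the hypothesis that $E$ is non-empty. Let $u\in G$. If there exists $\preceq\in E$ such that $u\in V_\preceq$, then $u\in S^*$ and we are done. Otherwise, for every $\preceq\in E$ we have $u\notin V_\preceq$, i.e.\ $\neg(u\succeq 1)$; by totality (Definition \ref{preorder} i)) this forces $1\succ u$, hence $u^{-1}\succ 1$ and $u^{-1}\in V_\preceq$. Picking any $\preceq\in E$ (which exists since $E\neq\emptyset$) we conclude $u^{-1}\in V_\preceq\subset S^*$. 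Thus $S^*$ is a preorder monoid, and by the correspondence recalled in Section 2.2 it defines a left-invariant preorder $\preceq_{\inf E}$ on $G$.

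Finally, assume $E\subset \ZR(G)$, so that each $V_\preceq$ is a normal submonoid of $G$. I would then show directly that $S^*$ is normal. Every element of $S^*$ is a product $v_1 v_2\cdots v_k$ with $v_i\in V_{\preceq_i}$ for some $\preceq_i\in E$, and for any $w\in G$ one has
$$w^{-1}(v_1v_2\cdots v_k)w=(w^{-1}v_1w)(w^{-1}v_2w)\cdots(w^{-1}v_kw),$$
with each factor $w^{-1}v_iw$ lying in $V_{\preceq_i}\subset S^*$ by normality of $V_{\preceq_i}$. Hence $w^{-1}S^*w\subset S^*$, so $S^*$ is normal and $\preceq_{\inf E}$ is bi-invariant.

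The only potentially subtle point is the preorder-monoid dichotomy, which might look like it requires compatibility among the various $V_\preceq$; the trick is that the alternative $u\notin S^*$ in particular rules out membership in every individual $V_\preceq$, and one then only needs a single $\preceq\in E$ to deduce $u^{-1}\in S^*$. The non-emptiness of $E$ is used in exactly this place.
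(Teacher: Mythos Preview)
Your proof is correct and follows essentially the same approach as the paper: take $S^*=\bigcap_{S\in A_E}S$, observe it is a submonoid containing $\bigcup V_\preceq$, and use that each $V_\preceq$ is a preorder monoid to get the dichotomy $u\in S^*$ or $u^{-1}\in S^*$. Your treatment of the bi-invariant case, writing elements of $S^*$ explicitly as products of elements from the $V_{\preceq_i}$ and conjugating factorwise, is a welcome elaboration of what the paper states in one line.
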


\begin{proof}
We have that $G\in A_E$, therefore this set is not empty. We set $V:=\bigcap\limits_{ S\in A_E}S$. We have $\bigcup\limits_{\preceq\in E} V_\preceq\subset V$, and for every $u\in G$, $u\in V$ or $u^{-1}\in V$ since the $V_\preceq$ are preorder monoids. Moreover $V$ is a monoid since the $S$ are monoids. This proves the existence of $\preceq_{\inf E}$.\\
Now, if all the $V_\preceq$ are normal monoids, then $V$ is a normal monoid, and $\preceq_{\inf E}$ is bi-invariant.
\end{proof}

Let $G$ be a group and let $\preceq \in \ZR_*(G)$. We  set 
$$\Ra_*(\preceq):=\{\preceq'\in\ZR_*(G) \text{ such that } \preceq' \LEQ \preceq\},$$
$$\Raf_*(\preceq):=\{\preceq'\in\ZR_*(G) \text{ such that } \preceq \LEQ \preceq'\}.$$

\begin{lem} \label{min et intersection}
Let $G$ be a group and $E\subset \ZR_*(G)$. We have
$$\bigcap_{\preceq\in E}\Ra_*(\preceq)  = \Ra_*\left(\preceq_{\inf E}\right).$$
\end{lem}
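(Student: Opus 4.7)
The plan is to translate everything to statements about preorder monoids via the characterization from the preceding lemma: $\preceq_1 \LEQ \preceq_2$ if and only if $V_{\preceq_2}\subset V_{\preceq_1}$. Under this dictionary, being refined reverses inclusion of monoids, so $\Ra_*(\preceq)$ becomes the set of $\preceq'\in\ZR_*(G)$ such that $V_\preceq\subset V_{\preceq'}$.

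With this reformulation, the right-hand side $\Ra_*(\preceq_{\inf E})$ is the set of $\preceq'\in\ZR_*(G)$ with $V_{\preceq_{\inf E}}\subset V_{\preceq'}$, while the left-hand side $\bigcap_{\preceq\in E}\Ra_*(\preceq)$ consists of those $\preceq'\in \ZR_*(G)$ satisfying $V_\preceq\subset V_{\preceq'}$ for every $\preceq\in E$, equivalently $\bigcup_{\preceq\in E}V_\preceq\subset V_{\preceq'}$. So the lemma reduces to the equivalence
$$V_{\preceq_{\inf E}}\subset V_{\preceq'}\quad\Longleftrightarrow\quad\bigcup_{\preceq\in E} V_\preceq\subset V_{\preceq'}.$$

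The forward direction is immediate, because $V_\preceq\subset V_{\preceq_{\inf E}}$ for each $\preceq\in E$ by construction. For the reverse direction I would use the defining property of $V_{\preceq_{\inf E}}$: it is the intersection of all submonoids $S$ of $G$ containing $\bigcup_{\preceq\in E}V_\preceq$. Since $V_{\preceq'}$ is itself a submonoid of $G$ (being a preorder monoid) and by hypothesis contains $\bigcup_{\preceq\in E}V_\preceq$, it belongs to the family $A_E$, hence the intersection $V_{\preceq_{\inf E}}$ is contained in $V_{\preceq'}$. In the bi-invariant case ($*=\emptyset$) nothing extra is needed: $V_{\preceq'}$ is a normal submonoid, and in particular still a submonoid lying in $A_E$.

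The proof is essentially a routine unpacking of definitions; no real obstacle arises, so this is the kind of lemma that fits in a couple of lines once the monoid characterization is available.
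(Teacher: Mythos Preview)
Your proposal is correct and follows essentially the same approach as the paper: both arguments translate the refinement relation into inclusions of preorder monoids and then use the defining minimality of $V_{\preceq_{\inf E}}$ among submonoids containing $\bigcup_{\preceq\in E}V_\preceq$. The paper compresses this into a single chain of equivalences, while you spell out the two directions separately, but the content is identical.
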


\begin{proof}
Indeed, we have
$$\preceq'\LEQ \preceq_{\inf E} \Longleftrightarrow V_{\preceq_{\inf E}}\subset V_\preceq'\Longleftrightarrow [\forall \preceq\in E,\ V_{\preceq}\subset V_{\preceq'}]\Longleftrightarrow [\forall \preceq\in E,\ \preceq'\LEQ\preceq].
$$
\end{proof}

\begin{rem}
If $E:=\{\preceq_1,\preceq_2\}$, then $\preceq_{\inf E}$ is denoted by $\preceq_1 \wedge \preceq_2$.
\end{rem}

\begin{lem}\label{lem2}
Let $G$ be a group. 
Let $\preceq_1$, $\preceq_2\in\ZR_l(G)$, none of them refining the other one. Then there is $u\in G$ such that
$$u\prec_1 1\text{ and } 1\prec_2 u.$$
\end{lem}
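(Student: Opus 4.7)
The plan is to convert the two non-refinement hypotheses into concrete witnesses, and then multiply them to build $u$. Since $\preceq_2$ does not refine $\preceq_1$, Remark \ref{contr} gives $a,b\in G$ with $a\prec_1 b$ but not $a\prec_2 b$; totality of $\preceq_2$ then forces $b\preceq_2 a$. Setting $x:=a^{-1}b$ and left-multiplying by $a^{-1}$ yields $1\prec_1 x$ and $x\preceq_2 1$. Symmetrically, since $\preceq_1$ does not refine $\preceq_2$, I obtain $y\in G$ with $1\prec_2 y$ and $y\preceq_1 1$.

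From $1\prec_1 x$, left-multiplication by $x^{-1}$ gives $x^{-1}\prec_1 1$; from $x\preceq_2 1$, the same left-multiplication gives $1\preceq_2 x^{-1}$. So I have the four inequalities
\[
x^{-1}\prec_1 1,\qquad y\preceq_1 1,\qquad 1\preceq_2 x^{-1},\qquad 1\prec_2 y,
\]
which is all I will need.

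I now claim that $u:=x^{-1}y$ satisfies both $u\prec_1 1$ and $1\prec_2 u$. Indeed, applying left-invariance (with multiplier $x^{-1}$) to $y\preceq_1 1$ gives $x^{-1}y\preceq_1 x^{-1}$, and combining with $x^{-1}\prec_1 1$ yields $u\prec_1 1$. Likewise, applying left-invariance to $1\prec_2 y$ with multiplier $x^{-1}$ gives $x^{-1}\prec_2 x^{-1}y$, and combining with $1\preceq_2 x^{-1}$ yields $1\prec_2 u$. The strict inequalities propagate through transitivity exactly as in the last line of Lemma \ref{lem1}.

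The only real subtlety is that, because only left-invariance is assumed, one cannot manipulate the witnesses on the right; this is why the product must be taken in the specific order $u=x^{-1}y$ (so that both inequalities can be hit by left-multiplication by the same element $x^{-1}$). Other than that, the argument is a straightforward combination of the contrapositive reformulation of ``refine'' and the translation trick $u\preceq v\Leftrightarrow 1\preceq u^{-1}v$ built into left-invariance.
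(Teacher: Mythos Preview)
Your proof is correct and follows essentially the same approach as the paper: both extract witnesses from the two non-refinement hypotheses and multiply them together, using left-invariance to push the resulting product below $1$ for $\preceq_1$ and above $1$ for $\preceq_2$. In fact your element $u=x^{-1}y$ coincides with the paper's $u=a^{-1}bd^{-1}c$ once the labels are matched; you simply normalise the witnesses to comparisons with $1$ before combining them, whereas the paper invokes Lemma~\ref{lem1} directly.
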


\begin{proof}
Because $\preceq_2$ is not refining $\preceq_1$ there are $a$, $b\in G$ such that
$a\preceq_2 b$  and $b\prec_1 a$.
By symmetry there are $c$, $d\in G$ such that
$c\preceq_1 d$ and $ d\prec_2 c$. 
Set $u=a^{-1}bd^{-1}c$. Then $u\prec_1 1$  by Lemma \ref{lem1}.  By symmetry we have $1\prec_2 u$.
\end{proof}

Therefore we have:

\begin{thm}\label{cor_raf_toset}
Let $G$ be a group. Then $\ZR_*(G)$ is a join-semilattice, that is a partially ordered set in which all subsets have  an infimum.\\
Moreover, for every  $\preceq\in\ZR_*(G)$,
 $(\Ra_*(\preceq), \,\boldsymbol{\leq}) $ is a totally ordered set.
\end{thm}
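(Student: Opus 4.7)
The plan is to package the material developed in the preceding three lemmas. The only genuinely new piece of reasoning is in the totality claim, and it occupies just a few lines.

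For the first assertion, let $E\subseteq\ZR_*(G)$ be non-empty. The lemma immediately preceding Lemma \ref{min et intersection} produces a preorder $\preceq_{\inf E}$ whose preorder monoid is the smallest element of $A_E$. Since $V_\preceq\subseteq V_{\preceq_{\inf E}}$ for every $\preceq\in E$, one has $\preceq_{\inf E}\LEQ\preceq$ for every $\preceq\in E$, so $\preceq_{\inf E}$ is a lower bound of $E$. Any other lower bound $\preceq'$ satisfies $V_\preceq\subseteq V_{\preceq'}$ for all $\preceq\in E$, which puts $V_{\preceq'}$ into $A_E$; by the minimality of $V_{\preceq_{\inf E}}$ one gets $V_{\preceq_{\inf E}}\subseteq V_{\preceq'}$, i.e.\ $\preceq'\LEQ\preceq_{\inf E}$. (Lemma \ref{min et intersection} is precisely this statement reformulated in terms of $\Ra_*$.) Hence $\preceq_{\inf E}$ is the greatest lower bound, and every non-empty subset of $\ZR_*(G)$ has an infimum.

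For the totality of $\Ra_*(\preceq)$, I would argue by contradiction. Fix $\preceq\in\ZR_*(G)$ and pick $\preceq_1,\preceq_2\in\Ra_*(\preceq)$, and suppose neither refines the other. Lemma \ref{lem2} then supplies $u\in G$ with $u\prec_1 1$ and $1\prec_2 u$. Since $\preceq$ refines both $\preceq_1$ and $\preceq_2$, Remark \ref{contr} applied to each gives $u\prec 1$ and $1\prec u$ simultaneously, an impossibility. Hence the two preorders are $\LEQ$-comparable and $\Ra_*(\preceq)$ is totally ordered. The only substantive step in the whole theorem is this final contradiction, and Lemma \ref{lem2} was tailored precisely for it, so I anticipate no obstacle.
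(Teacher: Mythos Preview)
Your proof is correct and follows essentially the same path as the paper. The paper dispatches the join-semilattice claim by a bare citation of Lemma \ref{min et intersection}, whereas you unpack that citation into the explicit greatest-lower-bound verification; for the totality of $\Ra_*(\preceq)$ your contradiction argument via Lemma \ref{lem2} matches the paper's proof line for line.
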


\begin{proof}
We have that $\ZR_*(G)$ is a join-semilattice by Lemma \ref{min et intersection}.\\
Now let $\preceq \in \ZR_*(G)$.
Let $\preceq_1$, $\preceq_2\in\Ra_*(\preceq)$, $\preceq_1\neq \preceq_2$. Assume, aiming for contradiction, that $\preceq_1\notin\Ra_*(\preceq_2)$ and $\preceq_2\notin \Ra_*(\preceq_1)$. Then by Lemma \ref{lem2} there exists $u\in G$ such that 
$u\prec_1 1$ and $u\succ_2 1.$
Since $\preceq$ refines $\preceq_1$ and $\preceq_2$ then
$u\prec 1$ and $u\succ 1$
which is a contradiction.
\end{proof}

\begin{prop}\label{tree}
Assume the following: 
\begin{itemize}
\item[i)] for every $\preceq\in\ZR_*(G)$, there is a maximal element $\preceq' \SEQ \preceq$.
\item[ii)] for every $\preceq_1$, $\preceq_2\in \ZR_*(G)$ with $\preceq_1\SEQ \preceq_2$, there is a minimal $\preceq'\in\ZR_*(G)$ such that
$$\preceq_1\SEQ\preceq'\LEQ \preceq_2.$$
\end{itemize}
In this case, $\ZR_*(G)$ has a rooted tree structure: the vertices of $\ZR_*(G)$ are the elements of $\ZR_*(G)$, and there is an edge between two vertices $\preceq_1$, $\preceq_2$ if $\preceq_1\SEQ\preceq_2$ or $\preceq_2\SEQ\preceq_1$ and there is no preorder between $\preceq_1$ and $\preceq_2$. The root is the trivial preorder.
\end{prop}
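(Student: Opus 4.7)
The plan is to verify that $(\ZR_*(G),\LEQ)$ carries the structure of a rooted tree by checking in turn: that the trivial preorder $\leq_\emptyset$ is the unique minimum of the poset, that every other vertex has exactly one immediate predecessor, and that the Hasse diagram of cover relations contains no cycle.

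First, since $V_{\leq_\emptyset}=G$, the inclusion $V_\preceq\subset V_{\leq_\emptyset}$ holds for every $\preceq\in\ZR_*(G)$, so the monoid characterization of $\LEQ$ gives $\leq_\emptyset\LEQ\preceq$, identifying $\leq_\emptyset$ as the root. For the uniqueness of the immediate predecessor of a non-root vertex $\preceq$, I would combine assumption (i) with Theorem \ref{cor_raf_toset}: the latter tells us that $\Ra_*(\preceq)$ is totally ordered under $\LEQ$, while (i) provides a maximal element $\preceq'$ of $\Ra_*(\preceq)\setminus\{\preceq\}$; uniqueness is then automatic, since any two maximal elements of a chain must coincide. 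Acyclicity would be proved by contradiction: in any putative cycle in the Hasse diagram, take a vertex $\preceq_j$ that is $\LEQ$-maximal within the cycle; its two neighbors then both lie in the totally ordered set $\Ra_*(\preceq_j)$ and are therefore comparable, so one of them lies strictly between the other and $\preceq_j$, contradicting the fact that the corresponding edge is a covering relation.

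The step I expect to be the main obstacle is connectedness of the Hasse diagram, that is, producing a finite path of covers joining $\leq_\emptyset$ to any given $\preceq$. Assumption (ii) lets one build such a path upward from the root one cover at a time: applied to $\leq_\emptyset\SEQ\preceq$ it yields a cover $\preceq_1$ of $\leq_\emptyset$ with $\preceq_1\LEQ\preceq$, then a cover $\preceq_2$ of $\preceq_1$ below $\preceq$, and so on. However, (i) and (ii) alone do not visibly force this ascending chain to terminate at $\preceq$ after finitely many steps, so I would need either to extract an additional chain condition from the hypotheses or to read ``rooted tree structure'' in the weaker combinatorial sense provided by the root and unique-parent data already established, with the finite-depth requirement being an extra ingredient present in the applications of interest such as $G=\Z^n$.
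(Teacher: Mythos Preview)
Your treatment of the root, the unique immediate predecessor, and acyclicity is correct and matches what the paper uses implicitly. The gap you flag---connectedness of the Hasse diagram---is indeed the heart of the matter, but both of your proposed ways out miss the target: no extra chain condition is needed, and the paper does mean a tree with finite paths between any two vertices.

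What you are missing is the use of the join-semilattice structure (Lemma~\ref{min et intersection}) in place of your non-terminating upward induction. After reducing via $\preceq_1\wedge\preceq_2$ to the case $\preceq_2\LEQ\preceq_1$, the paper sets
\[
E:=\{\preceq\in\Ra_*(\preceq_1)\cap\Raf_*(\preceq_2)\mid \preceq\text{ is joined to }\preceq_1\text{ by a finite chain of covers}\}.
\]
This set contains $\preceq_1$, and its infimum $\preceq_{\inf E}$ exists in $\ZR_*(G)$. If $\preceq_{\inf E}\notin E$, every element of $E$ lies strictly above $\preceq_{\inf E}$; hypothesis (ii) then yields a cover $\preceq$ of $\preceq_{\inf E}$ with $\preceq\LEQ\preceq_1$, and since $\Ra_*(\preceq_1)$ is a chain every $e\in E$ must satisfy $\preceq\LEQ e$. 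Thus $\preceq$ is a lower bound of $E$ strictly above $\preceq_{\inf E}$, which is impossible. Hence $\preceq_{\inf E}\in E$. Finally, if $\preceq_2\SEQ\preceq_{\inf E}$, hypothesis (i) produces a maximal $\preceq''\SEQ\preceq_{\inf E}$; comparability inside $\Ra_*(\preceq_1)$ forces $\preceq_2\LEQ\preceq''$, and the edge from $\preceq''$ to $\preceq_{\inf E}$ shows $\preceq''\in E$, contradicting minimality of $\preceq_{\inf E}$. Therefore $\preceq_2=\preceq_{\inf E}\in E$, and the finite path exists. This is precisely the paper's argument; it replaces your step-by-step ascent with a single infimum that is automatically attained.
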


\begin{proof}
We only have to prove that, for every $\preceq_1$, $\preceq_2\in\ZR_*(G)$, there is a unique path connecting $\preceq_1$ to $\preceq_2$. By replacing $\preceq_2$ by $\preceq_1\wedge\preceq_2$, we may assume that $\preceq_2\LEQ \preceq_1$. Since $\Ra_*(\preceq_1)$ is totally ordered, we only need to prove that there is a path between $\preceq_2$ and $\preceq_1$. We consider the set
$$E:=\{\preceq\in\Ra_*(\preceq_1)\cap\Raf_*(\preceq_2)\mid \preceq\text{ is connected to }\preceq_1\}.$$
We claim that $\preceq_{\inf E}\in E$. Indeed, by ii), if $\preceq_{\inf E}\neq \preceq_1$, there is a minimal $\preceq\in \ZR_*(G)$ such that
$\preceq_{\inf E}\SEQ\preceq\LEQ\preceq_1.$
Therefore $\preceq\in E$, and $\preceq_{\inf E}$ is connected by an edge to $\preceq$.\\
Now, if $\preceq_2\neq \preceq_{\inf E}$, by i) there is $\preceq\in\ZR_*(G)$ such that
$\preceq_2\LEQ \preceq\SEQ\preceq_{\inf E},$
and $\preceq$ and $\preceq_{\inf E}$ are connected by an edge. This contradicts the definition of $E$. Therefore $\preceq_{\inf E}=\preceq_2$. This proves the result.
\end{proof}

\subsection{Topologies}

\subsubsection{The Zariski topology}

\begin{defi}
Let $G$ be a group. The Zariski  topology on $\ZR_*(G)$ (or Z-topology for short) is the topology
 for  which the sets
$$\mathcal O_{u}:=\{\preceq\in\ZR_*(G)\mid u\succeq 1\},$$
where $u$   runs over the elements of $G$, form a basis of open sets.  
\end{defi}

\begin{prop}\label{spe_ord}
Let $G$ be a group. The order  $\boldsymbol{\leq}$ is the specialization order of the topological set $\ZR_*(G)$, that is
$$\forall \preceq_1, \preceq_2\in \ZR_*(G),\  \preceq_1\, \boldsymbol{\leq}\,\preceq_2\  \Longleftrightarrow \ \ovl{\{\preceq_2\}}^{Z}\subset\ovl{\{\preceq_1\}}^{Z}$$
where $\ovl{E}^Z$ is the closure of $E\subset \ZR_*(G)$ for the Zariski topology.
\end{prop}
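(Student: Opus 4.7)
The plan is to reduce the specialization-order condition directly to the monoid-inclusion characterization of $\LEQ$ proved earlier in the paper.

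First, I would unpack the closure inclusion. Since $\overline{\{\preceq_2\}}^Z$ is the smallest closed set containing $\preceq_2$, the inclusion $\overline{\{\preceq_2\}}^Z\subset\overline{\{\preceq_1\}}^Z$ is equivalent to the single membership $\preceq_2\in\overline{\{\preceq_1\}}^Z$. So it suffices to prove
$$\preceq_1\LEQ\preceq_2\ \Longleftrightarrow\ \preceq_2\in\overline{\{\preceq_1\}}^Z.$$

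Next, I would translate the right-hand side via the basis of the Zariski topology. By definition of the closure, $\preceq_2\in\overline{\{\preceq_1\}}^Z$ holds iff every open neighborhood of $\preceq_2$ contains $\preceq_1$. Since the sets $\O_u=\{\preceq\mid u\succeq 1\}$ form a basis of open sets, and since an arbitrary neighborhood of $\preceq_2$ contains some basic open containing $\preceq_2$, this is in turn equivalent to: for every $u\in G$ with $u\succeq_2 1$, one has $u\succeq_1 1$. That condition is precisely the inclusion $V_{\preceq_2}\subset V_{\preceq_1}$.

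Finally, I would invoke the lemma already proved in the preceding subsection, which states that $\preceq_1\LEQ\preceq_2$ is equivalent to $V_{\preceq_2}\subset V_{\preceq_1}$. Combining the two equivalences gives the statement. There is essentially no obstacle here: the only point requiring a moment of care is the passage from a general open neighborhood to a basic one, which is harmless since the $\O_u$ form a basis. Everything else is an unfolding of the definitions.
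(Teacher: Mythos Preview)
Your proof is correct and follows essentially the same approach as the paper's. The only cosmetic difference is that the paper re-derives the refinement condition directly (checking that $u\succeq_2 v\Rightarrow u\succeq_1 v$ via the basic open $\O_{v^{-1}u}$), whereas you package the last step as an appeal to the earlier lemma giving $\preceq_1\LEQ\preceq_2\Leftrightarrow V_{\preceq_2}\subset V_{\preceq_1}$; this is a minor modularization, not a different argument.
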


\begin{proof}
Let $\preceq_1\,\boldsymbol{\leq}\,\preceq_2$. If $\preceq_2\in\mathcal O_{u}$ then $u\succeq_2 1$ and $u\succeq_1 1$ since $\preceq_2$ refines $\preceq_1$. Thus $\preceq_1\in\mathcal O_{u}$. Hence every open set containing $\preceq_2$ contains $\preceq_1$. Hence $\preceq_2$ belongs to the Z-closure of $\{\preceq_1\}$.\\
On the other hand assume $\ovl{\{\preceq_2\}}^{Z}\subset\ovl{\{\preceq_1\}}^{Z}$. Let $u$, $v\in G$ such that $u\succeq_2 v$, that is $\preceq_2\in \mathcal O_{v^{-1}u}$. Since $ \mathcal O_{v^{-1}u}$ is open we have $\preceq_1\in \mathcal O_{v^{-1}u}$. Therefore $u\succeq_1v$ and $\preceq_1\, \boldsymbol{\leq}\,\preceq_2$.
\end{proof}

In particular this implies that for a given preorder $\preceq\in \ZR_*(G)$ we have
$\Raf_*(\preceq)=\ovl{\{\preceq\}}^Z$.

\subsubsection{The Inverse topology}

\begin{defi}
Let $G$ be a group.
The set $\ZR_*(G)$ is endowed with a topology for  which the sets
$$\mathcal U_{u}:=\{\preceq\in\ZR_*(G)\mid u\succ 1\},$$
where $u$  runs over the elements of $G$, form a basis of open sets. This topology is called the Inverse topology or I-topology. 
\end{defi}

\begin{rem}
For every $u\in G$,  $\mathcal O_{u}$ is the complement of  $\mathcal U_{u^{-1}}$.
\end{rem}

\begin{prop}\label{spe_ord2}
Let $G$ be a group. The order  $\boldsymbol{\leq}$ is the specialization inverse order of the topological set $\ZR_*(G)$, that is
$$\forall \preceq_1, \preceq_2\in \ZR_*(G),\  \preceq_1\, \boldsymbol{\leq}\,\preceq_2\  \Longleftrightarrow \ \ovl{\{\preceq_1\}}^{I}\subset\ovl{\{\preceq_2\}}^{I}.$$
\end{prop}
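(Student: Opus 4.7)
The plan is to mirror the proof of Proposition \ref{spe_ord} almost verbatim, with two small adjustments dictated by the fact that the Inverse topology is generated by the strict-inequality sets $\U_u=\{\preceq:u\succ 1\}$ rather than by the non-strict $\O_u$. First, the appropriate reformulation of refinement is the one from Remark \ref{contr}: $\preceq_1\LEQ\preceq_2$ is equivalent to $u\prec_1 v\Rightarrow u\prec_2 v$ for all $u,v\in G$. Second, one should recall that in any topological space, $\ovl{\{x\}}\subset\ovl{\{y\}}$ is equivalent to the condition that every open neighborhood of $x$ also contains $y$; this dual phrasing (compared with the Zariski case) accounts for why the inclusion of closures here goes in the reverse direction.

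For the forward implication, assume $\preceq_1\LEQ\preceq_2$ and let $\U_u$ be a basic $I$-open containing $\preceq_1$. Then $1\prec_1 u$, so by Remark \ref{contr} we get $1\prec_2 u$, i.e. $\preceq_2\in\U_u$. Since the sets $\U_u$ form a basis, every $I$-open neighborhood of $\preceq_1$ contains $\preceq_2$, whence $\preceq_1\in\ovl{\{\preceq_2\}}^I$ and therefore $\ovl{\{\preceq_1\}}^I\subset\ovl{\{\preceq_2\}}^I$.

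For the converse, assume the inclusion of closures and take $u,v\in G$ with $u\prec_1 v$. By left-invariance this reads $1\prec_1 u^{-1}v$, i.e. $\preceq_1\in\U_{u^{-1}v}$. The hypothesis then forces $\preceq_2\in\U_{u^{-1}v}$, so $u\prec_2 v$, and Remark \ref{contr} gives $\preceq_1\LEQ\preceq_2$. No real obstacle arises: the argument is a direct transcription of the one for Proposition \ref{spe_ord}, reflecting the duality $\O_u=\ZR_*(G)\setminus\U_{u^{-1}}$ noted just before the statement, which reverses the specialization order between the Zariski and Inverse topologies.
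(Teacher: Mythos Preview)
Your proof is correct and follows essentially the same route as the paper's: both directions are argued by testing membership in the basic open sets $\U_u$ and using the characterization of refinement via strict inequalities (Remark \ref{contr}). The only cosmetic difference is that you make the underlying topological fact about closures and the appeal to Remark \ref{contr} explicit, whereas the paper leaves these implicit.
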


\begin{proof}
Let $\preceq_1\,\boldsymbol{\leq}\,\preceq_2$. If $\preceq_1\in\mathcal U_{u}$ then $u\succ_1 1$. Therefore $u\succ_2 1$ since $\preceq_2$ refines $\preceq_1$. Thus $\preceq_2\in\mathcal U_{u}$. Hence every open set containing $\preceq_1$ contains $\preceq_2$. Hence $\preceq_1$ belongs to the I-closure of $\{\preceq_2\}$.\\
On the other hand assume $\ovl{\{\preceq_1\}}^{I}\subset\ovl{\{\preceq_2\}}^{I}$. Let $u$, $v\in G$ such that $u\succ_1 v$, that is $\preceq_1\in \mathcal U_{v^{-1}u}$. Since $ \mathcal U_{v^{-1}u}$ is open we have $\preceq_2\in \mathcal U_{v^{-1}u}$. Therefore $u\succ_2v$ and $\preceq_1\, \boldsymbol{\leq}\,\preceq_2$.
\end{proof}

Therefore for a given preorder $\preceq\in \ZR_*(G)$ we have 
$\Ra_*(\preceq)=\ovl{\{\preceq\}}^I$.

%

\subsubsection{The Patch topology}
\begin{defi}
The Patch topology on $\ZR_*(G)$ (or P-topology for short) is the topology for which the sets $\U_{u}$ and $\O_{u}$, where $u$  runs over $G$, form a basis of open sets. This is the coarsest topology finer than the Zariski and the Inverse topologies.
\end{defi}



\subsubsection{Remarks about these topologies}
From now on, for a set $E\subset \ZR_*(G)$ where $G$ is a group, we say that that $E$ is $\star$-open if $E$ is open in the $\star$-topology for $\star=$ Z, I, or P. In the same way we define $\star$-continuous maps and $\star$-homeomorphisms.

\begin{prop}
Let $G$ be a group.
The  space $\ZR_*(G)$ endowed with the Zariski or the Inverse topology is $\T_0$, but it is not $\T_1$ when $\ZR_*(G)\neq \{\leq_\emptyset\}$. In particular $\ZR_*(G)$ is not metrizable for these two topologies.
\end{prop}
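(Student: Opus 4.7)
The plan is to derive both statements directly from Propositions \ref{spe_ord} and \ref{spe_ord2}, which identify $\LEQ$ with the specialization order of the Zariski topology and with the reverse specialization order of the Inverse topology. Recall that a topological space is $\T_0$ exactly when its specialization quasi-order is antisymmetric, and $\T_1$ exactly when this quasi-order is equality. Thus the whole statement will follow from purely order-theoretic properties of $\LEQ$ together with these two propositions.

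For the $\T_0$ assertion, I would simply observe that $\LEQ$ is an actual partial order on $\ZR_*(G)$, antisymmetry being Remark \ref{rem1}. Consequently the specialization quasi-orders of the Z- and I-topologies, which coincide with $\LEQ$ and its opposite respectively, are both antisymmetric, so $\ZR_*(G)$ is $\T_0$ for each of these two topologies.

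For the failure of $\T_1$, I would use that the trivial preorder $\leq_\emptyset$ is the minimum of $(\ZR_*(G),\LEQ)$, since every preorder tautologically refines $\leq_\emptyset$. Assuming $\ZR_*(G)\neq\{\leq_\emptyset\}$, fix any $\preceq_0\neq \leq_\emptyset$. By Proposition \ref{spe_ord},
$$\ovl{\{\leq_\emptyset\}}^Z=\Raf_*(\leq_\emptyset)=\ZR_*(G)\supsetneq\{\leq_\emptyset\},$$
so $\{\leq_\emptyset\}$ is not Z-closed. By Proposition \ref{spe_ord2},
$$\ovl{\{\preceq_0\}}^I=\Ra_*(\preceq_0)\supsetneq\{\preceq_0\},$$
the strict inclusion being witnessed by $\leq_\emptyset$. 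In each topology a singleton fails to be closed, so neither topology is $\T_1$. Non-metrizability is then immediate, since every metric space is Hausdorff, and \emph{a fortiori} $\T_1$.

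I do not anticipate any real obstacle: the whole statement is essentially a formal consequence of the identifications of $\LEQ$ with the specialization order carried out in the previous two propositions; the only point to check is that $\leq_\emptyset$ is indeed refined by every element of $\ZR_*(G)$, which is immediate from the definition of refinement.
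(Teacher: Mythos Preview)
Your proof is correct and follows essentially the same route as the paper: both derive the failure of $\T_1$ from Propositions \ref{spe_ord} and \ref{spe_ord2} by exhibiting a non-closed singleton (the paper takes an arbitrary pair $\preceq_1\SEQ\preceq_2$, you specialize to $\preceq_1=\leq_\emptyset$). The only difference is stylistic: for $\T_0$, the paper explicitly produces a basic open set $\O_{u^{-1}v}$ separating two distinct preorders, whereas you invoke the general equivalence between $\T_0$ and antisymmetry of the specialization preorder, which is a legitimate shortcut once Propositions \ref{spe_ord} and \ref{spe_ord2} are in hand.
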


\begin{proof}
Let us prove the statement for the Zariski topology. \\
Let $\preceq_1\boldsymbol{\leq}\preceq_2$. Then $\preceq_2$ belongs to the closure of $\{\preceq_1\}$ by Proposition \ref{spe_ord} and $\ZR_*(G)$ is not $\T_1$.\\
Now let $\preceq_1$ and $\preceq_2$ two distinct preorders on $G$. In particular one of them does not refine the other. Assume for instance that $\preceq_2$ does not refine $\preceq_1$. Thus there exist $u$, $v\in\Z^n$ such that $u\preceq_2 v$ and $v\prec_1 u$. Thus $\preceq_2\in\mathcal O_{u^{-1}v}$ but $\preceq_1\notin\mathcal O_{u^{-1}v}$. Hence $\ZR_*(G)$ is $\T_0$.\\
The proof is similar for the Inverse topology.
\end{proof}

\begin{lem}
The I-topology and the Z-topology agree on $\Ord_*(G)$.
\end{lem}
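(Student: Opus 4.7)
The plan is to compare the basic open sets of the two topologies restricted to $\Ord_*(G)$ and observe that they essentially coincide, thanks to antisymmetry.

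First, I would fix $u\in G$ and examine $\O_u\cap\Ord_*(G)$ and $\U_u\cap\Ord_*(G)$. If $u=1$, then $\O_u=\ZR_*(G)$ (because $1\succeq 1$ for any preorder) while $\U_u=\emptyset$ (because $1\not\succ 1$), so both intersections are trivially open in either subspace topology. So the interesting case is $u\neq 1$.

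For $u\neq 1$, I would use the key observation: if $\preceq$ is an order (hence antisymmetric), then $u\succeq 1$ is equivalent to $u\succ 1$. Indeed, antisymmetry forces $u\succeq 1$ and $1\succeq u$ to imply $u=1$; since $u\neq 1$, the condition $u\succeq 1$ already implies $\neg(1\succeq u)$, i.e.\ $u\succ 1$. The converse inclusion $u\succ 1\Rightarrow u\succeq 1$ is trivial. Hence
$$\O_u\cap \Ord_*(G)=\U_u\cap \Ord_*(G)\quad\text{for every }u\in G\setminus\{1\}.$$

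Finally, I would conclude: since the basic open sets of the Z-topology and of the I-topology cut out the same collection of subsets of $\Ord_*(G)$, the two subspace topologies on $\Ord_*(G)$ have the same basis, hence they coincide. There is no serious obstacle here; the only point to be careful about is the degenerate case $u=1$, which is handled separately as above.
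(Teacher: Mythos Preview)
Your proof is correct and follows exactly the same approach as the paper: both arguments rest on the observation that $\O_u\cap\Ord_*(G)=\U_u\cap\Ord_*(G)$ for $u\neq 1$, which immediately forces the two subspace topologies to coincide. You simply spell out the antisymmetry argument and the degenerate case $u=1$ in more detail than the paper does.
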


\begin{proof}
Indeed, for $u\in G$, $u\neq 1$ we have $\mathcal U_{u}\cap\Ord_*(G)=\mathcal O_{u}\cap\Ord_*(G).$
\end{proof}

\begin{rem}
Let $G$ be a group. We can equip $G$ with the cofinite topology: the closed sets are $G$ along with the finite subsets of $G$. Now, for $F\subset G$ finite and $U\subset G$ cofinite, we set
$$\mathcal C(F,U):=\{V \text{ preorder monoid  } \mid F\cap V=\emptyset \text{ and } U\cap V\neq \emptyset\}.$$
The \emph{Chabauty topology} on the set of preorder monoids  is the topology generated by the  $\mathcal C(F,U)$, when $F$ (resp. $U$) runs over the finite (resp. cofinite) subsets of $G$ (cf. \cite{Ch} or \cite[Definition I.10]{Wa}). Therefore, by identifying a preorder with its preorder monoid, we have
$$\mathcal C(F,U)=\bigcup_{u\in U}\O_u\cap\bigcap_{u\in F}\U_{u^{-1}}.$$
This shows that the Patch topology is the Chabauty topology on the set of preorder monoids (see also \cite{Na}).
\end{rem}

\subsection{Compactness of the space of preorders}

\begin{thm}\label{compacity_preorders}
Let $G$ be a group. Then $\ZR_\ast(G)$ is compact for the P-topology. Therefore it is compact for the Z-topology and the I-topology.
\end{thm}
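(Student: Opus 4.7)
The plan is to deduce compactness from Tychonoff's theorem by embedding $\ZR_\ast(G)$ as a closed subspace of the compact product $X := \{0,1\}^G$ (with $\{0,1\}$ discrete, $X$ with the product topology). Concretely, I would identify a preorder $\preceq$ with the characteristic function $\chi_{V_\preceq} \in X$ of its preorder monoid, yielding an injection $\iota \colon \ZR_\ast(G) \hookrightarrow X$. Since the P-topology is finer than the Z-topology and the I-topology, P-compactness will immediately imply the other two.

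After invoking Tychonoff to see that $X$ is compact, the first real step is to show that $\iota(\ZR_\ast(G))$ is closed in $X$. A function $f \in X$ lies in the image iff $V := f^{-1}(1)$ is a preorder monoid, which translates into three families of conditions on $f$: (a) $f(1) = 1$; (b) for each pair $(u,v) \in G \times G$, it is not the case that $f(u) = f(v) = 1$ and $f(uv) = 0$; (c) for each $u \in G$, $f(u) = 1$ or $f(u^{-1}) = 1$. Each of these is a condition on finitely many coordinates, so it defines a closed subset of $X$, and their intersection is closed. In the bi-invariant case one adds the normality condition: for each $(u,v) \in G \times G$, it is not the case that $f(u) = 1$ and $f(v^{-1}uv) = 0$; this too is a closed cylinder condition, and the same conclusion holds.

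The second step is to verify that $\iota$ is a homeomorphism onto its image when the source carries the P-topology. A subbasis of the product topology on $X$ is given by the cylinders $\{f : f(u) = 1\}$ and $\{f : f(u) = 0\}$ for $u \in G$. Under $\iota$, these pull back to
\[
\{\preceq : u \in V_\preceq\} = \O_u \quad \text{and} \quad \{\preceq : u \notin V_\preceq\} = \{\preceq : u^{-1} \succ 1\} = \U_{u^{-1}},
\]
where the middle equality uses totality (Definition \ref{preorder} i)). Since the $\O_u$ and $\U_u$ form, by definition, a subbasis of the P-topology on $\ZR_\ast(G)$, the two topologies match, $\iota(\ZR_\ast(G))$ is closed in a compact space, and $\ZR_\ast(G)$ is P-compact.

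I do not expect a serious obstacle: this is the standard Tychonoff/Stone-space proof that Zariski used for Zariski--Riemann spaces of valuations. The only point requiring a little care is the bookkeeping translating the preorder-monoid axioms---including normality in the bi-invariant case---into closed cylinder conditions on finitely many coordinates, and recognising $\U_{u^{-1}}$ rather than $\U_u$ as the complementary open set to $\O_u$.
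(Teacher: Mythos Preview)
Your argument is correct and follows essentially the same Tychonoff strategy as the paper; the only cosmetic difference is that the paper embeds $\ZR_\ast(G)$ into $\{-1,0,1\}^G$ via the sign map $\nu_\preceq(u)\in\{-1,0,1\}$ recording whether $u\prec 1$, $u\sim_\preceq 1$, or $u\succ 1$, whereas you use the more economical embedding into $\{0,1\}^G$ via the characteristic function of $V_\preceq$. Both encodings yield the same closed cylinder conditions and the same identification of the induced topology with the P-topology, so the two proofs are interchangeable.
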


\begin{proof}
We follow the method of Samuel and Zariski \cite[Theorem 40]{SZ}.\\
We do the proof for the space of left-invariant preorders. The case of bi-invariant preorders is similar.\\
For every $\preceq \in \ZR_l(G)$, we define the map $\nu_\preceq \colon G \to \{-1,0,1\}$ as follows:
$$\nu_\preceq(u):=\left\{
    \begin{array}{lll}
        -1 & \mbox{if } u \prec 1 \\
        \mbox{ }0  &\mbox{if } u \sim_\preceq 1 \\
        \mbox{ }1  &\mbox{if } u\succ 1.
    \end{array}
\right.$$
This defines an inclusion
$$\ZR_l(G) \subset \{-1,0,1\}^{G }.$$
We consider the discrete topology on $\{-1,0,1\}$, and we consider the product topology on $\{-1,0,1\}^{G }$. The induced topology on $\ZR_l(G)$ is the P-topology.\\
We have that $\{-1,0,1\}$ is compact, and the product $\{-1,0,1\}^{G}$ is compact by Tychonoff's Theorem. In the corresponding product topology, we claim that $\ZR_l(G)$ is a closed set, so  compact. That is, $\ZR_l(G)$ is compact in the P-topology.\\\\
Thus let us prove that $\ZR_l(G)$ is closed in $\{-1,0,1\}^{G}$.
For any map $\nu\in \{-1,0,1\}^{G}$, we have that $\nu \in \ZR_l(G)$ if and only if:
\begin{enumerate}
\item[i)] For all $u, v \in G$, either $\nu(u)=-1$ or $\nu(v)=-1$ or $\nu(uv)\in \{0,1\}$
\item[ii)] For all $u \in G$, $\nu(u^{-1})=-\nu(u)$.
\end{enumerate}
Clearly, if $\nu=\nu_\preceq$ for some $\preceq\in\ZR_*(G)$, these properties are satisfied. On the other if $\nu$ satisfies these properties, let us show that $\nu=\nu_\preceq$ for some $\preceq$. In this case, necessarily $\preceq$ is defined  as follows: $\forall u, v\in G$, 
$u \prec v$ if $\nu(u^{-1}v)=1$; $u \succ v$ if $\nu(u^{-1}v)=-1$;  $u\sim_\preceq v$ if $\nu(v^{-1}u)=0$. We only need to prove that $\preceq$ is a preorder in the sense of Definition \ref{preorder}. \\
Clearly, for every $u$, $v\in G$ we have $u\preceq v$ or $u\succeq v$. By ii), we have $u\sim_\preceq u$ for every $u\in G$. Then let $u$, $v$, $w$ be elements of $G$, and assume $u\preceq v$ and $v \preceq w$. It means that $\nu(u^{-1}v)$ and $\nu(v^{-1}w)$ are in $\{0,1\}$. By i), we have $\nu(u^{-1}w)\in \{0,1\}$, hence $u \preceq w$.\\
Now let $u$, $v$, $w\in G$ with  $v\preceq w$, that is, $\nu(v^{-1}u^{-1}uw)=\nu(v^{-1}w)\in \{0,1\}$. Thus, $uv \preceq uw$.\\
For every $u \in G$, we denote by $\Phi_{u} \colon \{-1,0,1\}^{G} \to \{-1,0,1\}$ the map sending $\nu$ onto $\nu(u)$. This map is continuous for the discrete topology.\\
For every $u$, $v \in G$, we set 
 $$F_{u,v}:=\Phi_{u}^{-1}\left(\{-1\}\right)\cup \Phi_{v}^{-1}\left(\{-1\}\right)\cup \Phi_{uv}^{-1}\left(\{0,1\}\right)$$
 and
 $$F'_{u}:=\Phi_{u}^{-1}\left(\{0\}\right)\cup \left( \Phi_{u}^{-1}\left(\{1\}\right)\cap \Phi_{u^{-1}}^{-1}\left(\{-1\}\right)\right)$$
  Since $\Phi_{u}$ is continuous for the discrete topology, the sets $F_{u,v}$ and $F'_{u}$ are closed sets for all $u$, $v \in G$.\\
Moreover, we have that $\ZR_l(G)= \bigcap\limits_{u,v \in G} F_{u,v}\cap \bigcap\limits_{u\in G} F'_{u}.$
Therefore $\ZR_l(G)$ is a closed set. This completes the proof.
\end{proof}

\begin{lem}
Let $G$ be a group. Then $\Ord_*(G)$ is a closed set of $\ZR_*(G)$ in the Z-topology and the P-topology.
\end{lem}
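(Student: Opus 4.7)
My plan is to exhibit the complement of $\Ord_*(G)$ in $\ZR_*(G)$ as a Z-open set, which is enough for both claims since the Patch topology refines the Zariski topology.

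First I would give a pointwise characterization of when $\preceq \in \ZR_*(G)$ fails to be an order. By definition, $\preceq$ is not antisymmetric exactly when there exist distinct $a, b \in G$ with $a \sim_\preceq b$, that is, $a \preceq b$ and $b \preceq a$. Setting $w := a^{-1}b$ and using left-invariance, this is equivalent to the existence of some $w \in G \setminus \{1\}$ with $w \sim_\preceq 1$, i.e., $w \succeq 1$ and $w^{-1} \succeq 1$. In terms of the sets defining the Zariski basis, this says
$$\preceq\ \in\ \O_w \cap \O_{w^{-1}}.$$

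Consequently,
$$\ZR_*(G) \setminus \Ord_*(G) \ =\ \bigcup_{w \in G \setminus \{1\}} \bigl(\O_w \cap \O_{w^{-1}}\bigr).$$
Each $\O_w$ is a basic Z-open set, so each finite intersection $\O_w \cap \O_{w^{-1}}$ is Z-open, and hence so is the union. Therefore $\Ord_*(G)$ is Z-closed.

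Finally, since the Patch topology is by definition finer than the Zariski topology, every Z-closed subset of $\ZR_*(G)$ is also P-closed, and the lemma follows. There is no real obstacle here; the only point to be slightly careful about is translating antisymmetry failure into a condition involving a single element $w \neq 1$ via left-invariance, so that the bad set becomes visibly a union of basic Zariski opens.
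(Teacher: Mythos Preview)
Your proof is correct and takes essentially the same approach as the paper: you write the complement of $\Ord_*(G)$ as $\bigcup_{w\neq 1}(\O_w\cap\O_{w^{-1}})$, while the paper writes $\Ord_*(G)=\bigcap_{u\neq 1}(\O_u\cap\O_{u^{-1}})^c$, which is the same identity. Your version is slightly more detailed in justifying the reduction to a single element $w\neq 1$ via left-invariance, but the argument is identical.
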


\begin{proof}
We have $\Ord_*(G)=\bigcap\limits_{u \in G, u\neq 1}\left(\mathcal{O}_{u}\cap \mathcal{O}_{u^{-1}} \right)^c.$
\end{proof}

\begin{rem}
As a closed subset of a compact set, $\Ord_*(G)$ is compact for the Z-topology. Since the Z-topology and the I-topology coincide on the set of orders,  $\Ord_*(G)$ is compact for the I-topology.
\end{rem}

\begin{prop}
Let $G$ be a group. Then $\ZR(G)$ is a closed subset of $\ZR_l(G)$ for the I-topology and the P-topology.
\end{prop}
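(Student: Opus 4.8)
The statement to prove is that $\ZR(G)$ is a closed subset of $\ZR_l(G)$ for both the I-topology and the P-topology. Since the P-topology is finer than the I-topology, it suffices to prove the stronger claim that $\ZR(G)$ is I-closed; being closed for a coarser topology implies being closed for a finer one. So the plan is: exhibit $\ZR(G)$ as an intersection of I-closed sets inside $\ZR_l(G)$.

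The key idea is to translate the condition ``$\preceq$ is bi-invariant'' into a statement about the sub-basic I-open sets $\mathcal U_u$. Recall from the discussion after Definition \ref{semigroup_valuation} that $\preceq \in \ZR_l(G)$ is bi-invariant if and only if $V_\preceq$ is a normal submonoid, i.e. $v^{-1}uv \in V_\preceq$ for all $u \in V_\preceq$, $v \in G$. Using the $\mathcal U$-description it is cleaner to phrase this in terms of strict inequalities: $\preceq$ fails to be bi-invariant precisely when there exist $u, v \in G$ with $u \succ 1$ but $v^{-1}uv \preceq 1$, equivalently $1 \succeq v^{-1}uv$. For fixed $u, v$, the set of $\preceq$ with $u \succ 1$ is $\mathcal U_u$, which is I-open, and the set of $\preceq$ with $v^{-1}uv \prec 1$ is $\mathcal U_{(v^{-1}uv)^{-1}} = \mathcal U_{v^{-1}u^{-1}v}$, also I-open. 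So the ``bad'' set $\mathcal U_u \cap \mathcal U_{v^{-1}u^{-1}v}$ is I-open; one must check this captures exactly the failure, i.e. that $\preceq$ is bi-invariant iff for all $u,v$ one does \emph{not} have simultaneously $u \succ 1$ and $v^{-1}uv \prec 1$ — the case $v^{-1}uv \sim_\preceq 1$ with $u \succ 1$ cannot occur for a left-invariant preorder, since $u \sim_\preceq v(v^{-1}uv)v^{-1} \sim_\preceq 1$ would follow by left-invariance applied twice (conjugation is a composite of left translations up to the $\sim$-class), giving $u \sim_\preceq 1$, a contradiction. This little verification is the one spot requiring care.

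Concretely, I would write
$$\ZR(G) = \ZR_l(G) \setminus \bigcup_{u,v \in G} \bigl(\mathcal U_u \cap \mathcal U_{v^{-1}u^{-1}v}\bigr),$$
and since each $\mathcal U_u \cap \mathcal U_{v^{-1}u^{-1}v}$ is I-open in $\ZR_l(G)$, their union is I-open, so $\ZR(G)$ is I-closed. Then invoke that the P-topology refines the I-topology (Definition of the Patch topology) to conclude $\ZR(G)$ is also P-closed.

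**Main obstacle.** The only genuine subtlety is the equivalence between bi-invariance and the nonexistence of the bad pairs $(u,v)$ — in particular ruling out the ``$\sim_\preceq$'' borderline case as sketched above, and making sure the left-invariance hypothesis (which is all we have on elements of $\ZR_l(G)$) really does force the conjugate $v^{-1}uv$ into the same $\sim_\preceq$-class behaviour one expects. Everything else — continuity of $\Phi_u$, the $\mathcal U_u$ forming an I-sub-basis, closedness under arbitrary intersections/complements — is immediate from the definitions already in place. One could alternatively run the argument through the embedding $\ZR_l(G) \hookrightarrow \{-1,0,1\}^G$ used in the proof of Theorem \ref{compacity_preorders}, cutting out $\ZR(G)$ by the closed conditions $\nu(v^{-1}uv) = \nu(u)$ for all $u,v$; but that proves P-closedness directly and still needs the same bi-invariance reformulation, so I prefer the sub-basic formulation above, which yields the sharper I-closed statement at once.
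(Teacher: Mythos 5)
Your reduction of the Patch case to the Inverse case is logically fine, but the identity on which everything rests is false, and the spot you yourself flag as ``the one requiring care'' is exactly where it breaks. The verification you sketch is circular: from $v^{-1}uv\sim_\preceq 1$ left-invariance gives $uv\sim_\preceq v$, but passing from $uv\sim_\preceq v$ to $u\sim_\preceq 1$ is right-invariance, i.e.\ the very property being characterized --- conjugation is \emph{not} a composite of left translations, and in fact $u\succ 1$ with $v^{-1}uv\sim_\preceq 1$ does occur for left-invariant preorders. More seriously, the implication your displayed equality needs --- if no $u\succ 1$ has a conjugate $\prec 1$, then $\preceq$ is bi-invariant --- is false: the failure of bi-invariance can be carried entirely by elements of $G_\preceq$, which your bad sets $\U_u\cap\U_{v^{-1}u^{-1}v}$ never see.

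Concretely, take $G=\Z\wr\Z=\left(\bigoplus_{i\in\Z}\Z\right)\rtimes_\tau\Z$ with $\tau$ the shift $e_i\mapsto e_{i+1}$, and let $\preceq'$ be the bi-invariant order with positive cone $P=\{(a,k)\mid k>0\}\cup\{(a,0)\mid a>0\}$, where $a>0$ means that the nonzero coordinate of $a$ of highest index is positive. The subgroup $H=\bigoplus_{i\leq 0}\Z$ is $\preceq'$-isolated but not normal, and one checks directly (or via Proposition \ref{lex_order2}) that $V:=P\cup H$ is a preorder monoid; let $\preceq$ be the associated left-invariant preorder, so $G_\preceq=H$ and $\m_\preceq=P\setminus H$. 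Every $g\succ 1$ lies in $P$, hence all its conjugates lie in $P\subset V$, so $\preceq$ avoids every $\U_u\cap\U_{v^{-1}u^{-1}v}$; yet $(-e_0,0)\in G_\preceq$ has conjugate $(-e_1,0)\notin V$, so $\preceq$ is not bi-invariant. Thus your right-hand side strictly contains $\ZR(G)$. Worse, since $\m_\preceq\subset\m_{\preceq'}=P$, every I-open set containing $\preceq$ contains $\preceq'\in\ZR(G)$, i.e.\ $\preceq\in\ovl{\{\preceq'\}}^I$ (Proposition \ref{spe_ord2}); so for this $G$ the set $\ZR(G)$ is not I-closed at all, and the sharper I-statement you aim at cannot be repaired (the same preorder also lies in the set displayed in the paper's proof, so only the Patch half of the Proposition is accessible by this kind of formula). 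What your method does prove, after one correction, is the Patch statement: $\preceq$ fails to be bi-invariant iff there exist $u\succ 1$ and $v$ with $v^{-1}uv\preceq 1$ (if some $u\sim_\preceq 1$ has a conjugate $v^{-1}uv\prec 1$, apply this to $u'=v^{-1}u^{-1}v\succ 1$, whose conjugate by $v^{-1}$ is $u^{-1}\sim_\preceq 1$); hence $\ZR_l(G)\setminus\ZR(G)=\bigcup_{u,v}\left(\U_u\cap\O_{v^{-1}u^{-1}v}\right)$, a union of P-open (but not I-open) sets, which yields P-closedness.
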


\begin{proof}
Let $\preceq\in \ZR_l(G)$. We have that $\preceq$ is bi-invariant if and only if it is right invariant, that is
$$\forall u,v,w \in G,\ \  u\preceq v\Longleftrightarrow uw\preceq vw.$$
Therefore $\ZR(G)=\bigcap_{u\in G}\left(\bigcap_{w\in G} \O_{w^{-1}uw}\cup\bigcap_{w\in G} \O_{w^{-1}u^{-1}w}\right)$
is closed for these two topologies.
\end{proof}
\subsection{Residue group of a preorder}

\begin{defi}
Let $G$ be a group and $\preceq\in\ZR_l(G)$. Let $H$ be a subset of $G$. We say that $H$ is \emph{$\preceq$-isolated} (or \emph{$\preceq$-convex}) if, for every $u_1$, $u_2\in H$, $v\in G$,
$$u_1\preceq v\preceq u_2\Longrightarrow v\in H.$$
\end{defi}

\begin{lem}\label{subgroup}
Let $G$ be a  group and let $\preceq \in \ZR_l(G)$.
The set 
$$G_\preceq:=\{u\in G\mid u\sim_\preceq 1\}$$
is a $\preceq$-isolated  subgroup of $G$ called the \emph{residue group} of $\preceq$.\\
Moreover, if $\preceq$ is bi-invariant, then $G_\preceq$ is normal.
\end{lem}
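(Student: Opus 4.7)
The plan is to verify the three assertions in order: subgroup, $\preceq$-isolation, and normality under bi-invariance. All three should follow by routine manipulations using only left-invariance (plus right-invariance for the last part) and transitivity, together with Lemma \ref{lem1}.

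First I would check the subgroup axioms. The identity $1 \in G_\preceq$ is immediate from reflexivity of $\preceq$. For closure under inverses, if $u \sim_\preceq 1$ then both $u \preceq 1$ and $1 \preceq u$ hold; multiplying each inequality on the left by $u^{-1}$ (allowed by left-invariance, Definition \ref{preorder} iii)) gives $1 \preceq u^{-1}$ and $u^{-1} \preceq 1$, so $u^{-1} \in G_\preceq$. For closure under products, given $u, v \in G_\preceq$, left-multiplying $v \preceq 1$ by $u$ yields $uv \preceq u$, and combining with $u \preceq 1$ by transitivity gives $uv \preceq 1$; the symmetric argument using $1 \preceq v$ and $1 \preceq u$ gives $1 \preceq uv$.

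Next I would verify the $\preceq$-isolation property. Suppose $u_1, u_2 \in G_\preceq$ and $u_1 \preceq v \preceq u_2$. From $u_1 \sim_\preceq 1$ we have $1 \preceq u_1$, so by transitivity $1 \preceq v$. From $u_2 \sim_\preceq 1$ we have $u_2 \preceq 1$, so by transitivity $v \preceq 1$. Hence $v \sim_\preceq 1$, i.e.\ $v \in G_\preceq$.

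Finally, assume $\preceq$ is bi-invariant and let $u \in G_\preceq$, $w \in G$. Starting from $u \preceq 1$, left-multiplication by $w$ gives $wu \preceq w$, and then right-multiplication by $w^{-1}$ (using right-invariance) yields $wuw^{-1} \preceq 1$. The symmetric chain applied to $1 \preceq u$ produces $1 \preceq wuw^{-1}$. Therefore $wuw^{-1} \in G_\preceq$, establishing normality. The only genuinely delicate point—and even this is minor—is making sure that left-invariance alone suffices for the subgroup and isolation parts (it does), so that bi-invariance is genuinely needed only for normality; no serious obstacle is anticipated.
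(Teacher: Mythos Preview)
Your proof is correct and follows essentially the same approach as the paper's: the paper simply declares the subgroup property ``straightforward'' while you spell it out, and the isolation and normality arguments are identical in substance (the paper writes $uv \sim_\preceq u$ then right-multiplies by $u^{-1}$, which is your two-step left-then-right multiplication compressed into one line).
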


\begin{proof}
It is straightforward to check that $G_\preceq$ is a subgroup. Let us prove that $G_\preceq$ is $\preceq$-isolated. Let $u$, $v\in G_\preceq$ and $w\in G$ such that
$u\preceq w\preceq v$.
Then 
$1\preceq u \preceq w\preceq v\preceq 1$,
hence $w\in G_\preceq$. Thus $G_\preceq$ is $\preceq$-isolated.\\
Let us prove that $G_\preceq$ is normal when $\preceq$ is bi-invariant.
Let $u\in G$ and $v\in G_\preceq$. Then
$uv \sim_\preceq u$, thus $uvu^{-1}\sim_\preceq 1$. Thus $uG_\preceq u^{-1}\subset G_\preceq$ for every $u\in G$. Hence $G_\preceq$ is normal.
\end{proof}

\begin{rem}
Equivalently, we have $G_\preceq=V_\preceq\setminus \m_\preceq$.
\end{rem}

\begin{lem}\label{inc_subgroups}
Let $\preceq  \,\boldsymbol{\leq}\, \preceq'$ be two elements of $\ZR_*(G)$ where $G$ is a group. Then 
$$G_{\preceq'}\subset G_\preceq$$
with equality if and only if $\preceq=\preceq'$.
\end{lem}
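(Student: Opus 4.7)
The plan is to use the two equivalent characterizations of the refinement order already established: $\preceq \LEQ \preceq'$ iff $V_{\preceq'} \subset V_\preceq$ iff $\m_\preceq \subset \m_{\preceq'}$, together with the decomposition $G_\preceq = V_\preceq \setminus \m_\preceq$.

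For the inclusion $G_{\preceq'} \subset G_\preceq$, I would argue straight from Definition \ref{def_equiv}. If $u \in G_{\preceq'}$, then $u \preceq' 1$ and $1 \preceq' u$; since $\preceq'$ refines $\preceq$, both relations transfer to $\preceq$, giving $u \sim_\preceq 1$, i.e., $u \in G_\preceq$. (Alternatively, one can observe that $u \in G_{\preceq'} \subset V_{\preceq'} \subset V_\preceq$, and then rule out $u \in \m_\preceq$ below.)

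For the "if and only if", one direction is trivial: $\preceq \,=\, \preceq'$ obviously implies $G_\preceq = G_{\preceq'}$. For the converse, I would argue by contrapositive: assume $\preceq \LEQ \preceq'$ with $\preceq \,\neq\, \preceq'$ and exhibit an element of $G_\preceq \setminus G_{\preceq'}$. By the previous lemma, $\preceq \,\neq\, \preceq'$ forces the strict inclusion $V_{\preceq'} \subsetneq V_\preceq$, so I pick $u \in V_\preceq \setminus V_{\preceq'}$. Then $u \succeq 1$ while $u \prec' 1$. The key point is to rule out $u \succ 1$: if it held, then $u \in \m_\preceq \subset \m_{\preceq'}$, whence $u \succ' 1$, contradicting $u \prec' 1$. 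Therefore $u \sim_\preceq 1$, so $u \in G_\preceq$; but $u \notin V_{\preceq'} \supset G_{\preceq'}$, so $u \notin G_{\preceq'}$. This yields the strict inclusion $G_{\preceq'} \subsetneq G_\preceq$, completing the argument.

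I do not expect a serious obstacle here: once one has the dictionary between $\preceq$, $V_\preceq$, and $\m_\preceq$, everything reduces to a short manipulation. The one subtlety to keep in mind is that the strict inclusion used at the crucial step is the strict inclusion of the preorder monoids (not of the maximal ideals, which would also be true but requires an extra step to justify). Using $V_{\preceq'} \subsetneq V_\preceq$ directly gives the witness $u$ in a single line.
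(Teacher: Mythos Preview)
Your proof of the inclusion $G_{\preceq'}\subset G_\preceq$ is verbatim the paper's argument: take $u\in G_{\preceq'}$, i.e.\ $u\preceq'1$ and $1\preceq' u$, and use that $\preceq'$ refines $\preceq$ to transfer both relations.

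For the equality clause, your argument is correct and in fact more complete than the paper's: the paper's proof stops after the inclusion and does not address the ``if and only if'' at all. Your contrapositive works cleanly. The only point worth making explicit is why $\preceq\neq\preceq'$ forces $V_{\preceq'}\subsetneq V_\preceq$: this is because the correspondence $\preceq\mapsto V_\preceq$ between left-invariant preorders and preorder monoids is a bijection (established in the ``Preorder monoid'' subsection), so equal monoids give equal preorders. Once that is granted, your witness $u\in V_\preceq\setminus V_{\preceq'}$ satisfies $u\succeq 1$ and $u\prec'1$; the case $u\succ 1$ is excluded via $\m_\preceq\subset\m_{\preceq'}$, so $u\in G_\preceq\setminus G_{\preceq'}$ as you say.
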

\begin{proof}
Let $u\in G_{\preceq'}$, that is, $u\preceq'1$ and $1\preceq' u$. Since $\preceq'$ refines $\preceq$ we have $u\preceq 1$ and $1\preceq u$, that is, $u\in G_\preceq$.
\end{proof}

\begin{cor}\label{tree2}
If $G$ is a Noetherian and Artinian group, then $\ZR_l(G)$ is a rooted tree.\\
If $G$ is a group  satisfying the ascending and descending chain conditions on normal subgroups, then $\ZR(G)$ is a rooted tree.
\end{cor}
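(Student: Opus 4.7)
The plan is to invoke Proposition \ref{tree}, so it suffices to verify its two conditions i) and ii). The key tool is Lemma \ref{inc_subgroups}: the assignment $\preceq' \mapsto G_{\preceq'}$ is an order-reversing injection from $\Ra_*(\preceq)$ (equipped with $\LEQ$) into the set of subgroups of $G$ (equipped with $\subseteq$), with strict inequality $\preceq_1 \SEQ \preceq_2$ translating to strict containment $G_{\preceq_2} \subsetneq G_{\preceq_1}$. When $*=\emptyset$, Lemma \ref{subgroup} further guarantees that each $G_{\preceq'}$ is a normal subgroup of $G$.

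For condition i), fix $\preceq \neq \leq_\emptyset$ in $\ZR_*(G)$. The family $\mathcal{F}:=\{G_{\preceq'} : \preceq' \SEQ \preceq\}$ is non-empty, since $G = G_{\leq_\emptyset}\in \mathcal{F}$. The descending chain condition on (normal) subgroups of $G$ then produces a minimal element $H_0 = G_{\preceq^*} \in \mathcal{F}$; any intermediate $\preceq^* \SEQ \preceq'' \SEQ \preceq$ would yield $G_{\preceq''} \in \mathcal{F}$ strictly below $H_0$, contradicting minimality. Hence $\preceq^*$ is maximal with $\preceq^* \SEQ \preceq$.

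Condition ii) is handled symmetrically. Given $\preceq_1 \SEQ \preceq_2$, the family $\mathcal{F}':=\{G_{\preceq'} : \preceq_1 \SEQ \preceq' \LEQ \preceq_2\}$ is non-empty (it contains $G_{\preceq_2}$) and each of its members is strictly contained in $G_{\preceq_1}$. The ascending chain condition then produces a maximal element $H_1 = G_{\preceq^*} \in \mathcal{F}'$, and the analogous contradiction argument shows $\preceq^*$ is minimal with $\preceq_1 \SEQ \preceq^* \LEQ \preceq_2$. No substantive obstacle arises once the order-reversing correspondence is in hand: the two halves of the hypothesis (DCC for i), ACC for ii)) match cleanly the two requirements of Proposition \ref{tree}, with Lemma \ref{subgroup} simply replacing arbitrary subgroups by normal ones in the bi-invariant case $*=\emptyset$.
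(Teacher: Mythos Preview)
Your proof is correct and follows exactly the approach the paper sketches: the paper's own proof is the single line ``This is a direct consequence of Proposition \ref{tree}, and Lemmas \ref{subgroup} and \ref{inc_subgroups},'' and you have simply unpacked that sentence by making explicit how the order-reversing injection $\preceq' \mapsto G_{\preceq'}$ converts the DCC/ACC hypotheses into conditions i) and ii) of Proposition \ref{tree}. One small remark: since $\Ra_*(\preceq)$ is totally ordered (Theorem \ref{cor_raf_toset}), your families $\mathcal{F}$ and $\mathcal{F}'$ are in fact chains, so the chain conditions apply directly without needing the stronger ``every non-empty family has a minimal/maximal element'' formulation.
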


\begin{proof}
This is a direct consequence of Proposition \ref{tree}, and Lemmas \ref{subgroup} and \ref{inc_subgroups}.
\end{proof}

\begin{prop}\label{quotient_subgroup}

Let $G$ be a group and $H$ be a normal subgroup of $G$. Then
 $$\Ord_*(G/H)\neq \emptyset\Longleftrightarrow \exists\preceq\in\ZR_*(G)\text{ such that }H=G_\preceq.$$
\end{prop}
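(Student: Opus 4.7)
The plan is to prove the two implications separately, with most of the work going into the backward direction.

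For the $(\Rightarrow)$ direction, I would start from an order $\ovl\preceq \in \Ord_\ast(G/H)$ and pull it back to $G$ by defining
\[ u \preceq v \ \Longleftrightarrow\ \ovl u\, \ovl\preceq\, \ovl v. \]
Totality and transitivity of $\preceq$ are immediate. Left-invariance (and bi-invariance when $\ast=\emptyset$) transfers from $\ovl\preceq$ through the canonical projection, so $\preceq \in \ZR_\ast(G)$. For the identification $G_\preceq=H$: if $u \in G_\preceq$ then $\ovl u\, \ovl\sim\, \ovl 1$, and since $\ovl\preceq$ is antisymmetric this forces $\ovl u = \ovl 1$, i.e.\ $u \in H$; conversely, any $u \in H$ maps to $\ovl 1$ and so satisfies $u \sim_\preceq 1$.

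For the $(\Leftarrow)$ direction, suppose $\preceq \in \ZR_\ast(G)$ with $G_\preceq = H$. I would push $\preceq$ forward to $G/H$ by declaring
\[ \ovl u\, \ovl\preceq\, \ovl v \ \Longleftrightarrow\ u \preceq v. \]
The key step, and the one I expect to be the main obstacle, is well-definedness: if $u' = uh_1$ and $v' = vh_2$ with $h_1,h_2 \in H$, one must show $u\preceq v \Leftrightarrow u' \preceq v'$. By left-invariance applied to $u^{-1}$, this reduces to checking $1 \preceq w \Leftrightarrow h_1 \preceq w h_2$ where $w = u^{-1}v$. Using $h_1 \sim_\preceq 1$ and left-invariance by $w$, we can already conclude $h_1 \preceq wh_2 \Leftrightarrow 1 \preceq wh_2$. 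To finish, I would use that $H$ is normal to rewrite $wh_2 = h_2'w$ with $h_2' \in H$, and then conclude $1 \preceq h_2' w \Leftrightarrow 1 \preceq w$ by left-multiplying by $h_2'^{-1} \sim_\preceq 1$. This is exactly where the hypothesis that $H$ is normal is needed; left-invariance alone only lets us shift elements of $H$ past factors on the left.

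Once well-definedness is established, the remaining verifications are straightforward: totality and transitivity of $\ovl\preceq$ come from the same properties of $\preceq$, and left-invariance (resp.\ bi-invariance when $\ast = \emptyset$) descends from $\preceq$ to the quotient. Finally, antisymmetry of $\ovl\preceq$ is exactly the statement $G_\preceq \subseteq H$: if $\ovl u \, \ovl\sim\, \ovl v$ then $u \sim_\preceq v$, i.e.\ $u^{-1}v \in G_\preceq = H$, hence $\ovl u = \ovl v$. This produces an element of $\Ord_\ast(G/H)$ and completes the proof.
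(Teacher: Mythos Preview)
Your argument is correct and in fact more complete than the paper's own proof. The paper only writes out the $(\Rightarrow)$ direction, pulling back an order on $G/H$ to a preorder on $G$ exactly as you do; for the converse it says nothing, presumably regarding it as implicit in the earlier observation (Definition~\ref{def_equiv}) that a bi-invariant preorder descends to an order on $G/\!\sim_\preceq$, and leaving the left-invariant case unaddressed. Your pushforward argument for $(\Leftarrow)$, with the explicit well-definedness check using normality of $H$ to commute $h_2$ past $w$, fills precisely this gap and works uniformly for $*=l$ and $*=\emptyset$.

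One small wording issue: in the step ``$h_1 \preceq wh_2 \Leftrightarrow 1 \preceq wh_2$'' you invoke ``left-invariance by $w$,'' but what is actually used is just $h_1\sim_\preceq 1$ and transitivity. This does not affect the validity of the argument.
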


\begin{proof}
Let $\preceq' \in \Ord_*(G/H)$.
This induces a preorder on $G$ by defining for every $u$, $v \in G$:
$$ u\preceq v \Longleftrightarrow \overline{u}\preceq' \overline{v}.$$
It is straightforward to check that $H=G_{\preceq}$.
\end{proof}

\begin{ex}\label{ex_ab_ord}
If $G$ is a torsion-free abelian group then $\Ord(G)\neq \emptyset$ (see Example 1.3.8 \cite{G}).
\end{ex}

\begin{prop}\label{exact_seq}
Let $G$ be a group.
Let $\preceq\in \ZR_*(G)$. 
Then we have:
\begin{itemize}
\item[i)] There is a increasing bijection between $\ZR_*(G_\preceq)$ and  $\Raf_*(\preceq)$. This bijection is a Z-homeomorphism and an I-homeomorphism.
\item[ii)] If $\preceq$ is bi-invariant, there is an injective increasing Z-continuous and I-continuous map 
$$\psi_\preceq:\Ra_*(\preceq)\lgw \ZR_*(G/G_\preceq).$$
Its image is $\Ra_*(\psi_\preceq(\preceq))$. Moreover the inverse
$$\psi_\preceq^{-1}:\Ra_*(\psi_\preceq(\preceq))\lgw \Ra_*(\preceq)$$
is increasing, Z-continuous, and I-continuous.
\end{itemize}
\end{prop}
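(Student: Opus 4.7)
The approach is to translate both statements into the preorder-monoid language of Section 2.2: part i) records that a refinement of $\preceq$ is determined by how it splits $G_\preceq$, while part ii) is dual in that a coarsening of $\preceq$ is determined by what it does on the quotient $G/G_\preceq$.

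For part i), the bijection sends $\preceq' \in \Raf_*(\preceq)$ to the restriction $\preceq'|_{G_\preceq}$, and sends $\preceq_0 \in \ZR_*(G_\preceq)$ to the preorder $\preceq'$ on $G$ characterized by
\[V_{\preceq'} := V_{\preceq_0} \cup \m_\preceq.\]
To verify that $V_{\preceq'}$ is a preorder monoid of $G$, totality follows at once from totality of $\preceq_0$ on $G_\preceq$ combined with totality of $\preceq$ outside $G_\preceq$. Multiplicative closure reduces, via the partition $V_{\preceq'} = V_{\preceq_0} \sqcup \m_\preceq$, to the cases where at least one factor lies in $\m_\preceq$; for those, left-invariance plus transitivity give $ab \succ 1$ (if $a \succeq 1$ and $b \succ 1$ then $ab \succ a \succeq 1$, and symmetrically for reversed roles). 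When $\preceq$ is bi-invariant, bi-invariance of $\preceq'$ follows from conjugation-invariance of both $\m_\preceq$ and $V_{\preceq_0}$ in $G$. The two assignments are mutually inverse because $V_{\preceq'} \subset V_\preceq = G_\preceq \sqcup \m_\preceq$, and monotonicity is immediate from the criterion $\preceq_1 \LEQ \preceq_2 \Leftrightarrow V_{\preceq_2} \subset V_{\preceq_1}$.

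For part ii), bi-invariance of $\preceq$ makes $G_\preceq$ normal (Lemma \ref{subgroup}), so $G/G_\preceq$ is a group. Given $\preceq' \LEQ \preceq$, Remark \ref{contr} forces $\sim_\preceq\,\subset\,\sim_{\preceq'}$, so $G_\preceq \subset G_{\preceq'}$ and $\preceq'$ is constant on cosets of $G_\preceq$; the induced preorder on $G/G_\preceq$ is $\psi_\preceq(\preceq')$. Injectivity and monotonicity of $\psi_\preceq$ follow by unwinding definitions. The image equals $\Ra_*(\psi_\preceq(\preceq))$ because any $\preceq_0 \LEQ \psi_\preceq(\preceq)$ pulls back along the quotient map to a preorder on $G$ coarser than $\preceq$, providing the inverse, which is manifestly monotone as well.

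Continuity for the Z- and I-topologies, in both parts, follows a single uniform pattern: the preimage of every basic open set is itself open. For i), the preimage of $\mathcal{O}_u^G$ (resp.\ $\mathcal{U}_u^G$) is $\mathcal{O}_u^{G_\preceq}$ (resp.\ $\mathcal{U}_u^{G_\preceq}$) when $u \in G_\preceq$, all of $\ZR_*(G_\preceq)$ when $u \in \m_\preceq$, and empty when $u^{-1} \in \m_\preceq$; the image side is symmetric. For ii), the tautological identities
\[\psi_\preceq^{-1}\!\left(\mathcal{O}_{\bar u}^{G/G_\preceq}\right) = \mathcal{O}_u^G \cap \Ra_*(\preceq), \qquad \psi_\preceq\!\left(\mathcal{O}_u^G \cap \Ra_*(\preceq)\right) = \mathcal{O}_{\bar u}^{G/G_\preceq} \cap \Ra_*(\psi_\preceq(\preceq)),\]
together with the analogous identities for $\mathcal{U}_u$, handle continuity of both $\psi_\preceq$ and $\psi_\preceq^{-1}$ at once. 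The main technical obstacle is the multiplicative closure of $V_{\preceq_0} \cup \m_\preceq$ in part i), where one must rely on left-invariance alone (since $\preceq$ need not be right-invariant in the $\ZR_l$ case) and carefully mix strict with non-strict inequalities to conclude $ab \succ 1$ in the cases involving $\m_\preceq$.
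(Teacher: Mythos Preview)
Your proof is correct and follows essentially the same route as the paper's: the map in i) is exactly the paper's $\phi_\preceq$ (you have just rewritten the lexicographic definition $u\prec''v\Leftrightarrow u\prec v$ or $(u\sim_\preceq v$ and $v^{-1}u\preceq_0 1)$ as the monoid identity $V_{\preceq'}=V_{\preceq_0}\cup\m_\preceq$), and part ii) together with the continuity computations match the paper line for line. One caution: your sentence ``bi-invariance of $\preceq'$ follows from conjugation-invariance of $V_{\preceq_0}$ in $G$'' asserts more than is available---$\preceq_0\in\ZR(G_\preceq)$ only gives $V_{\preceq_0}$ normal in $G_\preceq$, not in $G$---but the paper's ``it is straightforward to check that $\preceq''\in\ZR_*(G)$'' glosses over the very same point, so you are no worse off than the original.
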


\begin{proof}
Let us prove i). First we show that the inclusion $G_\preceq \subset G$ induces a bijection $\phi_\preceq$ between $\ZR_*(G_\preceq)$ and  $\Raf_*(\preceq)$. Let $\preceq'\in\ZR_*(G_\preceq)$. This preorder defines in a unique way a preorder $$\phi_\preceq(\preceq'):=\preceq''\in  \Raf_*(\preceq)$$ as follows:\\  
Let $u$, $v\in G$. 
If $ u\prec v$ then we set $u\prec'' v$.\\
 If $u\sim_\preceq v$  then $v^{-1}u\in G_\preceq$ and we set $u\preceq''v$ (resp. $u\succeq'' v$) if  $v^{-1}u\preceq' 1$ (resp. $v^{-1}u\succeq'1$).\\
It is straightforward to check that $\preceq''\in \ZR_*(G)$ refines $\preceq$ (that is $\preceq''\in  \Raf_*(\preceq)$), and that the restriction of  $\preceq''$ to $G_\preceq$ is $\preceq'$. Thus $\phi_\preceq$ is a bijection and its inverse is the restriction map:
$$\preceq''\in  \Raf_*(\preceq)\longmapsto \preceq''_{|G_\preceq}\in\ZR_*(G_\preceq).$$
For $u\in G_\preceq$, we have 
$$(\phi_\preceq^{-1})^{-1}(\mathcal O_{u}\cap\ZR_*(G_\preceq))=\mathcal O_{u}\cap   \Raf(\preceq).$$
Hence $\phi_\preceq^{-1}(\mathcal O_{u}\cap   \Raf(\preceq))=\mathcal O_{u}\cap\ZR_*(G_\preceq)$,
Therefore $\phi_\preceq$ and $\phi_\preceq^{-1}$ are Z-continuous and I-continuous. Moreover these two maps are increasing maps from their construction.\\
\\
Now let us prove ii). Let $\preceq'\in\ZR_*(G)$ such that $ \preceq'\ \boldsymbol{\leq} \ \preceq$. Then $G_\preceq\subset G_{\preceq'}$. Therefore $\preceq'$ induces a preorder $\preceq''$ on $G/G_\preceq$ by defining
$$\ovl u\preceq'' 1\Longleftrightarrow u\preceq' 1$$
for every $u\in G$, where $\ovl u$ denotes the image of $u$  in $G/G_\preceq$. Then $\preceq''$ is well defined because, if $v\in G$ is such that $\ovl u=\ovl v$, we have $v^{-1}u\in G_\preceq\subset G_{\preceq'}$ and $v\preceq' 1$ when $u\preceq' 1$.
Thus we can define a map 
$$\psi_\preceq: \Ra_*(\preceq)\lgw \ZR_*(G/G_\preceq)$$
such that $\psi_\preceq (\preceq')=\preceq''$, and this map is clearly injective and increasing. The image of $\preceq$ by $\psi_\preceq$ is an order on $G/G_\preceq$ and the image of $\psi_\preceq$ is included in $\Ra(\psi_\preceq(\preceq))$.\\
The inverse of $\psi_\preceq$ is defined by
$$u\ \psi_\preceq^{-1}(\preceq'')\ v\Longleftrightarrow\ovl u\preceq''\ovl v$$
for every $u$, $v\in G$.\\
Now let $u\in G$. We have
$$\psi^{-1}_\preceq(\mathcal O_{\ovl u}\cap\Ra_*(\psi_\preceq(\preceq)))=\mathcal O_{u}\cap \Ra_*(\preceq)$$
and
$$(\psi^{-1}_\preceq)^{-1}\left(\mathcal O_{u}\cap \Ra_*(\preceq)\right)=\mathcal O_{\ovl u}\cap \Ra_*(\psi_\preceq(\preceq)).$$
Therefore $\psi_\preceq$ is  Z-continuous and I-continuous and $\psi_\preceq^{-1}$ also.
\end{proof}

\begin{prop}\label{quotient_map}
Let $G$ be a group and $H$ be a normal subgroup of $G$. Then there is a bijection, which is an increasing   Z-homeomorphism and a I-homeomorphism:
$$\ZR_*(G/H)\simeq \{\preceq\in\ZR_*(G)\mid H\subset G_\preceq\}.$$
\end{prop}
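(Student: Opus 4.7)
The plan is to show that the canonical projection $\pi : G \lgw G/H$ induces, by pullback and pushforward, two mutually inverse maps between the two sets in the statement, and then to verify that these maps respect both the order and the basic open sets of the Zariski and Inverse topologies. Conceptually this is the unrestricted version of the map $\psi_\preceq$ built in part ii) of Proposition \ref{exact_seq}, and indeed the latter should essentially drop out as the restriction of my map to $\Ra_*(\preceq)$.

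First I would define $\Phi : \ZR_*(G/H) \lgw \{\preceq \in \ZR_*(G) \mid H \subset G_\preceq\}$ by setting $u\,\Phi(\preceq')\,v \iff \pi(u) \preceq' \pi(v)$. Since $\pi$ is a surjective group homomorphism, the preorder axioms and (bi-)invariance transfer directly from $\preceq'$ to $\Phi(\preceq')$, and every $h \in H$ projects to $\ovl 1$ and is therefore $\sim_{\Phi(\preceq')} 1$, so $H \subset G_{\Phi(\preceq')}$. In the other direction, given $\preceq \in \ZR_*(G)$ with $H \subset G_\preceq$, I would define $\Psi(\preceq) \in \ZR_*(G/H)$ by $\ovl u\,\Psi(\preceq)\,\ovl v \iff u \preceq v$. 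The one delicate point is well-definedness: if $\ovl u = \ovl{u_1}$ then $u_1 u^{-1} \in H \subset G_\preceq$, so $u_1 u^{-1} \sim_\preceq 1$, and left-invariance gives $u_1 \sim_\preceq u$; combined with the same observation for $v$, the truth of $u \preceq v$ is independent of the chosen representatives. Once $\Psi$ is well-defined, $\Phi \circ \Psi$ and $\Psi \circ \Phi$ are the identity by direct inspection, and both maps are manifestly order-preserving since $\preceq_1\LEQ\preceq_2$ is equivalent to the inclusion $V_{\preceq_2}\subset V_{\preceq_1}$, which is preserved and reflected by $\pi$ exactly under the normal subgroup condition.

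For the topological assertion I would verify the identities
$$\Phi^{-1}\bigl(\O_u \cap \{\preceq : H \subset G_\preceq\}\bigr) = \O_{\pi(u)} \cap \ZR_*(G/H),$$
$$\Psi^{-1}\bigl(\O_{\ovl u} \cap \ZR_*(G/H)\bigr) = \O_u \cap \{\preceq : H \subset G_\preceq\},$$
valid for every $u \in G$ (in the second identity for any lift $u$ of $\ovl u$), together with the analogous identities obtained by replacing $\O$ with $\U$ throughout. These say that both $\Phi$ and $\Psi$ send basic open sets to basic open sets, so $\Phi$ is simultaneously a Z-homeomorphism and an I-homeomorphism. I do not expect any genuine obstacle here; the main content is recognizing that the condition $H \subset G_\preceq$ is precisely what is needed to make the push-down $\Psi$ well-defined, after which the argument is a routine unwinding of definitions, with the only care required being to keep track of the subspace topology inherited from $\ZR_*(G)$ on the right-hand side.
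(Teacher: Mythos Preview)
Your proposal is correct and follows essentially the same approach as the paper: the paper constructs the map $\psi_H$ (your $\Psi$) from $\{\preceq\in\ZR_*(G)\mid H\subset G_\preceq\}$ to $\ZR_*(G/H)$ and its inverse (your $\Phi$), then appeals to the computation in Proposition \ref{exact_seq} ii) for the continuity statements. You have in fact supplied more detail than the paper does, particularly on well-definedness and on the explicit preimage identities for the basic open sets; one tiny cosmetic point is that for the left-invariance step it is cleaner to use $u^{-1}u_1\in H$ (rather than $u_1u^{-1}$) so that left-multiplying by $u$ directly gives $u_1\sim_\preceq u$, but since $H$ is normal this is immaterial.
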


\begin{proof}
Let $\preceq\in\ZR_*(G)$ such that $ H \subset G_{\preceq}$. Therefore $\preceq$ induces a preorder $\preceq'$ on $G/H$ by defining
$$\ovl u\preceq' 1\Longleftrightarrow u\preceq 1$$
for every $u\in G$, where $\ovl u$ denotes the image of $a$  in $G/H$. Then $\preceq'$ is clearly well defined.
Thus we can define a map 
$$\psi_H: \{\preceq\in\ZR_*(G)\mid H\subset G_\preceq\}\lgw \ZR_*(G/H)$$
such that $\psi_H (\preceq)=\preceq'$, and this map is clearly injective and increasing.\\
The inverse of $\psi_H$ is defined by
$$u\ \psi_H^{-1}(\preceq')\ v\Longleftrightarrow\ovl u\preceq'\ovl v$$
for every $u$, $v\in G$. As in the proof of Theorem \ref{exact_seq} ii), it is straightforward to check that $\psi_H$ is  Z-continuous and I-continuous and $\psi_H^{-1}$ also.
\end{proof}

\begin{defi}\label{ZR_rel}
Let $G$ be a group, and $H$ be a subset of $G$. The relative Zariski-Riemann space $\ZR_*(G/H)$ is defined to be the set of $\preceq\in\ZR_*(G)$ such that $u\sim_\preceq 1$ for every $u\in H$. Equivalently,
$$\ZR_*(G/H)= \{\preceq\in\ZR_*(G)\mid H\subset G_\preceq\}.$$
\end{defi}

\begin{rem}
If $H$ is a normal subgroup of $G$, Proposition \ref{quotient_map} allows us this abuse of notation.
\end{rem}

\begin{lem}
Let $G$ be a group and $H$ be a subset of $G$. Then
\begin{enumerate}
\item[i)] $\ZR_l(G/H)=\ZR_l(G/\langle H\rangle)$ where $\langle H\rangle$ is the subgroup of $G$ generated by $H$.
\item[ii] $\ZR(G/H)=\ZR(G/\langle H\rangle^N)$ where $\langle H\rangle^N$ is the normal subgroup of $G$ generated by $H$.
\end{enumerate}
\end{lem}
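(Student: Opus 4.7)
The plan is to observe that in both parts the inclusion from right to left is immediate (since $H \subset \langle H\rangle \subset \langle H\rangle^N$, any preorder whose residue group contains the larger set automatically contains $H$), so all the work is in the opposite direction, and this direction is essentially a minimality argument for the subgroup generated by $H$.

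For (i), I would take $\preceq \in \ZR_l(G)$ with $H \subset G_\preceq$ and invoke Lemma \ref{subgroup}, which tells us that $G_\preceq$ is a subgroup of $G$. Since $\langle H\rangle$ is by definition the smallest subgroup of $G$ containing $H$, the inclusion $H \subset G_\preceq$ forces $\langle H\rangle \subset G_\preceq$. Hence $\preceq \in \ZR_l(G/\langle H\rangle)$, which gives the non-trivial inclusion.

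For (ii), the argument is the same but one step stronger: taking $\preceq \in \ZR(G)$ with $H \subset G_\preceq$, Lemma \ref{subgroup} now says that $G_\preceq$ is a \emph{normal} subgroup of $G$ (because $\preceq$ is bi-invariant). Since $\langle H\rangle^N$ is by definition the smallest normal subgroup of $G$ containing $H$, we again get $\langle H\rangle^N \subset G_\preceq$, whence $\preceq \in \ZR(G/\langle H\rangle^N)$.

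There is no real obstacle: the statement is essentially a tautology once the two facts from Lemma \ref{subgroup} (that $G_\preceq$ is a subgroup, and that it is normal when $\preceq$ is bi-invariant) have been put on the table, combined with the universal property of the (normal) subgroup generated by a set. The only small thing to double-check is that the right-hand sides are well defined in the sense of Definition \ref{ZR_rel} applied to the subset $\langle H\rangle$ (resp.\ $\langle H\rangle^N$) of $G$, which is immediate.
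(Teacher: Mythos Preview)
Your proposal is correct and follows exactly the same approach as the paper: the paper's proof is the single line ``This is a direct consequence of Lemma \ref{subgroup} and Definition \ref{ZR_rel},'' and your argument is precisely the unpacking of that sentence. You have simply made explicit the minimality argument (that $G_\preceq$ being a subgroup, resp.\ normal subgroup, forces it to contain $\langle H\rangle$, resp.\ $\langle H\rangle^N$) that the paper leaves to the reader.
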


\begin{proof}
This is a direct consequence of Lemma \ref{subgroup} and Definition \ref{ZR_rel}.
\end{proof}

\begin{ex}
Let $G$ be a group. The abelianization $G^{ab}$ of $G$ is the quotient of $G$ by its commutator subgroup: $G^{ab}=G/[G,G]$. By the previous proposition $\ZR(G^{ab})$ embeds in $\ZR(G)$. For instance if $G=F_n$, the free group generated by $n$ elements, we have that $\ZR(\Z^n)$ embeds in $\ZR(F_n)$.
\end{ex}

\begin{defi}\label{lex_order}
Let $G$ be group and $H$ a subgroup of $G$. We denote by $G/H$ the set of left cosets $\{uH\}_{u\in G}$. We consider the set of left invariants preorders on $G/H$:
$$X:=\{\preceq \text{ preorder on } G/H\mid \forall u,v,w\in G,\ uH\preceq vH\Longrightarrow wuH\preceq wvH\}.$$
For $\preceq\in X$, we can define $\preceq'\in\ZR_l(G)$ by
$$\forall u,v\in G,\ u\preceq'v \text{ if } uH\preceq vH.$$
Clearly, for every $w\in H$, $w\sim_{\preceq'} 1$, thus $\preceq'\in\ZR_l(G/H)$.\\
On the other hand, if $\preceq'\in \ZR_l(G/H)$, we define $\preceq\in X$ as
$$\forall u,v\in G,\ uH\preceq vH\text{ if } u\preceq' v.$$
Then $\preceq$ is well defined, since if $u$, $v\in G$ satisfy $u\preceq' v$, we have, for $w_1$, $w_2\in H$:
$$w_1\preceq'1, w_2\succeq' 1\Longrightarrow uw_1\preceq' u\preceq' v\preceq' vw_2.$$
Therefore we can identify $X$ with $\ZR_l(G/H)$, and we denote $X$ by $\ZR_l(G/H)$. The set of orders of $\ZR_l(G/H)$ is denoted by $\Ord_l(G/H)$.
\end{defi}

\begin{defi}\label{def_isolated}
Let $G$ be a group and $H$ a subgroup of $G$. Let $\preceq_1\in \Ord_l(G/H)$ and $\preceq_2\in \ZR_l(H)$. We define $\preceq\in\ZR_l(G)$ as follows:
$$\forall u,v\in G,\ u\preceq v\text{ if }\left\{\begin{array}{c} uH\prec_1 vH\\
\text{ or } uH\sim_{\preceq_1} vH\  (\text{i.e. }v^{-1}u\in H)\text{ and } v^{-1}u\preceq_2 1\end{array}\right.$$
We denote $\preceq$ by $\preceq_1\circ\preceq_2$ and it is called the \emph{composition of $\preceq_1$ and $\preceq_2$}. It is straightforward to see that $H$ is $\preceq$-isolated and $\preceq_1\LEQ\preceq$.

\end{defi}

\begin{lem}\label{isolated}
Let $\preceq$, $\preceq'\in\ZR_l(G)$, $\preceq'\LEQ\preceq$. Then $G_{\preceq'}$ is $\preceq$-isolated.
\end{lem}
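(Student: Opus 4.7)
The plan is to unfold the definitions directly: recall that $\preceq' \LEQ \preceq$ means $\preceq$ refines $\preceq'$, so every inequality in $\preceq$ transfers to $\preceq'$. In particular, $u \preceq v$ implies $u \preceq' v$ for all $u,v \in G$. The residue subgroup is $G_{\preceq'} = \{u \in G \mid u \sim_{\preceq'} 1\}$, i.e., elements satisfying both $u \preceq' 1$ and $1 \preceq' u$.

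To prove $\preceq$-isolation, I would take arbitrary $u_1, u_2 \in G_{\preceq'}$ and $v \in G$ with $u_1 \preceq v \preceq u_2$, and show $v \in G_{\preceq'}$. Since $\preceq$ refines $\preceq'$, these two $\preceq$-inequalities give $u_1 \preceq' v$ and $v \preceq' u_2$. Combined with $1 \preceq' u_1$ and $u_2 \preceq' 1$ (from $u_1, u_2 \in G_{\preceq'}$), transitivity of $\preceq'$ yields the chain
\[
1 \preceq' u_1 \preceq' v \preceq' u_2 \preceq' 1,
\]
hence $v \sim_{\preceq'} 1$, i.e., $v \in G_{\preceq'}$.

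There is no real obstacle here; the only subtle point is that the definition of refinement (\emph{every} $\preceq$-comparison passes to $\preceq'$) must be applied to the \emph{non-strict} relations $u_1 \preceq v$ and $v \preceq u_2$, which is exactly the definition given just before Remark \ref{contr}. So the proof is one short chain of inequalities.
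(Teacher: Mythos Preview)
Your proof is correct and follows essentially the same idea as the paper's: transfer the chain of $\preceq$-inequalities to $\preceq'$ using refinement, then squeeze $v$ between two elements $\sim_{\preceq'} 1$. The paper's version is slightly terser---it only checks the special case $1 \preceq u \preceq v$ with $v \in G_{\preceq'}$ (implicitly using left-invariance and the subgroup property to reduce to this case)---whereas you verify the full isolation condition directly, which is arguably cleaner.
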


\begin{proof}
Let $u\in G$, $v\in G_{\preceq'}$ satisfy $v\succeq u\succeq 1$. Then $v\succeq' u\succeq'1$. So $u\sim_{\preceq'}1$ and $u\in G_{\preceq'}$.
\end{proof}

\begin{prop}\label{lex_order2}
Let $G$ be a group and $H$ a subgroup of $G$. Let $\preceq\in\ZR_l(G)$ such that $H$ is $\preceq$-isolated. We define $\preceq_1$ by
$$\forall u,v\in G, \ uH\preceq_1 vH \text{ if }\left\{\begin{array}{c} v^{-1}u\in H\\
\text{ or } v^{-1}u\preceq 1\end{array}\right.$$
Then $\preceq_1$ is well defined and belongs to $\Ord_l(G/H)$. If we set $\preceq_2=\preceq_{|_H}$, we have
$$\preceq=\preceq_1\circ\preceq_2.$$
\end{prop}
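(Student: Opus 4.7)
The plan is to prove the statement in four steps: a structural observation, well-definedness of $\preceq_1$, the order axioms, and the composition identity.

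The preliminary observation is that $G_\preceq\subset H$. Indeed, for any $u\in G_\preceq$ one has $1\preceq u\preceq 1$, and applying $\preceq$-isolatedness with $h_1=h_2=1\in H$ forces $u\in H$. This fact will be used below for antisymmetry of $\preceq_1$ and to rule out residue equivalences between distinct cosets of $H$.

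The core of the argument is well-definedness of the relation $\preceq_1$ on $G/H$. Given $u'=uh_1$ and $v'=vh_2$ with $h_1,h_2\in H$, one computes $(v')^{-1}u'=h_2^{-1}(v^{-1}u)h_1$, and membership in $H$ is preserved since $H$ is a subgroup. The delicate case is $v^{-1}u\notin H$ with $v^{-1}u\preceq 1$, where one must show $h_2^{-1}(v^{-1}u)h_1\preceq 1$. My strategy is to establish the sign dichotomy that follows from isolatedness and totality: any $z\notin H$ satisfying $z\preceq 1$ must actually satisfy $z\prec h$ for every $h\in H$, for otherwise a sandwich $h\preceq z\preceq 1$ with $h,1\in H$ would place $z\in H$. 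Applying this dichotomy both to $z=v^{-1}u$ and to $w=h_2^{-1}zh_1$ (which is also outside $H$ since $H$ is a subgroup), and using left-invariance combined with $h_2Hh_1^{-1}=H$, transfers the sign from $z$ to $w$. This step is the main obstacle I anticipate, because only left-invariance is available while the transformation $z\mapsto h_2^{-1}zh_1$ involves right multiplication; isolatedness of $H$ plays the role of the missing right-invariance.

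Once $\preceq_1$ is well-defined, the remaining order axioms on $G/H$ are essentially formal: totality, reflexivity, and transitivity descend immediately from $\preceq$, and left-invariance of $\preceq_1$ under left multiplication by a coset $wH$ follows from the identity $(wv)^{-1}(wu)=v^{-1}u$. Antisymmetry is where the preliminary observation reenters: if both $uH\preceq_1 vH$ and $vH\preceq_1 uH$ hold with $v^{-1}u\notin H$, then $v^{-1}u\preceq 1$ and $u^{-1}v\preceq 1$ give $v^{-1}u\in G_\preceq\subset H$, a contradiction.

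Finally, setting $\preceq_2:=\preceq_{|_H}$, the identity $\preceq=\preceq_1\circ\preceq_2$ is verified by splitting on whether $v^{-1}u\in H$. If $v^{-1}u\in H$, then $uH\sim_{\preceq_1} vH$ and Definition \ref{def_isolated} reduces the composition to $v^{-1}u\preceq_2 1$, equivalent to $u\preceq v$ by left-invariance. If $v^{-1}u\notin H$, the dichotomy identifies $u\preceq v$ with $v^{-1}u\prec H$, and hence with $uH\prec_1 vH$, which is precisely what $\preceq_1\circ\preceq_2$ records.
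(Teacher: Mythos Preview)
Your proposal is correct and follows essentially the same route as the paper. Both arguments hinge on the same sign dichotomy: for $z\notin H$, isolatedness forces either $z\prec h$ for all $h\in H$ or $h\prec z$ for all $h\in H$. You state this dichotomy once and invoke it to transfer the sign from $z=v^{-1}u$ to $w=h_2^{-1}zh_1$; the paper instead applies isolatedness twice inline, first to obtain $v^{-1}u\preceq w_2$ (hence $u^{-1}vw_2\succeq 1$ by left-invariance), and then again to the element $u^{-1}vw_2\notin H$ to obtain $u^{-1}vw_2\succeq w_1$, whence $vw_2\succeq uw_1$. Your remark that ``isolatedness of $H$ plays the role of the missing right-invariance'' is exactly the point: the right multiplication by $h_1$ cannot be handled by left-invariance alone, and in both proofs a second appeal to isolatedness (or equivalently a second use of the dichotomy, applied now to $u^{-1}vw_2$ or to $w$ itself) is what closes this gap. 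Your antisymmetry argument via $G_\preceq\subset H$ is a clean repackaging of the paper's direct use of isolatedness, and your treatment of the composition identity matches the paper's.
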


\begin{proof}
If $v^{-1}u\in H$, then $w_2^{-1}v^{-1}uw_1\in H$,  for every $w_1$, $w_2\in H$. Therefore $uH\preceq_1 vH$ is well defined in this case.\\
Assume now that $v^{-1}u\notin H$ and $v^{-1}u\preceq 1$. Since  $H$ is $\preceq$-isolated,  we have $v^{-1}u\preceq w_2$, thus
$u^{-1}vw_2\succeq 1$. Again, because $H$ is $\preceq$-isolated, we have $u^{-1}vw_2\succeq w_1$, thus $vw_2\succeq uw_1$.  This proves that $\preceq_1$ is well defined.\\
It is straightforward to see that $\preceq_1$ is a preorder and that $\preceq_{1|_{H}}$ is trivial. Moreover, by definition, $\preceq_1$ is left invariant. Finally, if $v^{-1}u\sim_\preceq 1$, then $v^{-1}u\in H$ since $H$ is $\preceq$-isolated. Therefore $\preceq_1\in\Ord_l(G/H)$.\\
Now let $u$, $v\in G$ with $u\preceq v$. In particular, $uH\preceq_1 vH$. If $uH\sim_{\preceq_1} vH$, then $v^{-1}u\in H$, and $v^{-1}u\preceq_2 1$. This shows that $\preceq=\preceq_1\circ\preceq_2$.
\end{proof}

\begin{cor}
Let $G$ be a group and let $\preceq'$, $\preceq\in\ZR_l(G)$.  Then
$$\preceq'\LEQ\preceq\  \Longleftrightarrow \ \exists \preceq_2\in \ZR_l(G_{\preceq'}),\ \ \preceq=\preceq'\circ\preceq_2$$
\end{cor}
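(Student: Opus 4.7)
The direction $(\Leftarrow)$ is essentially built into Definition~\ref{def_isolated}, which explicitly records that $\preceq_1\LEQ\preceq_1\circ\preceq_2$; applying this with $\preceq_1=\preceq'$ together with the assumption $\preceq=\preceq'\circ\preceq_2$ yields $\preceq'\LEQ\preceq$ at once.

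For $(\Rightarrow)$, I would begin with the observation that $H:=G_{\preceq'}$ is $\preceq$-isolated, which is exactly Lemma~\ref{isolated} applied to the hypothesis $\preceq'\LEQ\preceq$. This puts us in the setting of Proposition~\ref{lex_order2}, which produces an order $\preceq_1\in\Ord_l(G/H)$ (defined by $uH\preceq_1 vH$ iff $v^{-1}u\in H$ or $v^{-1}u\preceq 1$) together with $\preceq_2:=\preceq_{|_H}\in\ZR_l(G_{\preceq'})$, such that $\preceq=\preceq_1\circ\preceq_2$. So $\preceq_2$ is the desired preorder on $G_{\preceq'}$, and the whole matter reduces to checking that $\preceq_1$ coincides with $\preceq'$, viewed as an element of $\Ord_l(G/H)$ through the identification of Definition~\ref{lex_order} (noting that $u\sim_{\preceq'}1$ for every $u\in H=G_{\preceq'}$, so this identification applies, and that the induced preorder is antisymmetric on $G/G_{\preceq'}$, hence an order).

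The crux, and really the only substantive step, is this identification $\preceq_1=\preceq'$. I would verify it by unwinding definitions in both directions. If $uH\preceq_1 vH$, then either $v^{-1}u\in H=G_{\preceq'}$, giving $u\sim_{\preceq'}v$, or $v^{-1}u\preceq 1$, in which case the refinement $\preceq'\LEQ\preceq$ yields $v^{-1}u\preceq' 1$; either way $u\preceq' v$. Conversely, suppose $u\preceq' v$. If $u\sim_{\preceq'}v$ then $v^{-1}u\in H$; otherwise $u\prec' v$, and Remark~\ref{contr} applied to $\preceq'\LEQ\preceq$ gives $u\prec v$, hence $v^{-1}u\preceq 1$. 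Thus the two preorders on $G/H$ agree, and $\preceq=\preceq'\circ\preceq_2$ as required. Since the heavy lifting has been carried out in Proposition~\ref{lex_order2} and Lemma~\ref{isolated}, no genuine obstacle is anticipated; the corollary is essentially a translation of Proposition~\ref{lex_order2} into the language of refinements, with Remark~\ref{contr} playing the sole role of converting the refinement hypothesis into the form needed to handle strict inequalities.
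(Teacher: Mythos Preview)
Your proof is correct and follows essentially the same route as the paper's: both set $H=G_{\preceq'}$, apply Proposition~\ref{lex_order2} (implicitly invoking Lemma~\ref{isolated} to get $\preceq$-isolation of $H$), and identify the resulting $\preceq_1$ with $\preceq'$. The paper dismisses this last identification and the $(\Leftarrow)$ direction as straightforward, whereas you spell them out explicitly; your added detail is correct and nothing diverges from the intended argument.
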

\begin{proof}
 Let $H:=G_{\preceq'}$.  It is straightforward to check that $\preceq'$ is equal to the preorder $\preceq_1$ defined in Proposition \ref{lex_order2}. Therefore there is $\preceq_2\in \ZR_l(G_{\preceq'})$ such that $\preceq=\preceq'\circ\preceq_2$ (just take $\preceq_2:=\preceq_{|_{G_{\preceq'}}}$).
\end{proof}

\subsection{Extension and restriction of preorders}

\begin{lem}
Let $G$ be a group. Then there is a bijection between $\ZR_*(G)$ and $\ZR_*(G/\text{Tor}(G))$. This bijection is an increasing Z-homeomorphism and a I-homeomorphism. 
\end{lem}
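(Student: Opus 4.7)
The key observation is that every torsion element of $G$ must lie in the residue group of any preorder, so the statement will reduce to an identification via Definition \ref{ZR_rel}. Specifically, I would first show that for every $\preceq\in\ZR_*(G)$ and every $u\in\mathrm{Tor}(G)$, one has $u\sim_\preceq 1$. The argument is precisely the one used in the Remark following Lemma \ref{lem1}: if $u^n=1$ with $n\geq 1$, the totality of $\preceq$ gives $1\preceq u$ or $u\preceq 1$; in the first case, left-invariance together with transitivity produces the chain $1\preceq u\preceq u^2\preceq\cdots\preceq u^n=1$, forcing $u\sim_\preceq 1$, and the symmetric case is identical.

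This immediately yields the inclusion $\mathrm{Tor}(G)\subset G_\preceq$ for every $\preceq\in\ZR_*(G)$. By Definition \ref{ZR_rel}, this inclusion is exactly the statement that $\preceq$ belongs to the relative Zariski--Riemann space $\ZR_*(G/\mathrm{Tor}(G))$. Hence $\ZR_*(G)\subseteq\ZR_*(G/\mathrm{Tor}(G))$, while the reverse inclusion is trivial, so the two sets coincide. The bijection announced in the lemma is therefore the identity map on $\ZR_*(G)$, which is tautologically an increasing Z-homeomorphism and I-homeomorphism since its source and target carry the same order and the same basic open sets $\O_u$ and $\U_u$.

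The only subtle point, which I would address for completeness, is that $\mathrm{Tor}(G)$ need not be a subgroup (let alone a normal subgroup) in the non-abelian case, so the notation $G/\mathrm{Tor}(G)$ must be interpreted through Definition \ref{ZR_rel}. The preceding lemma identifies $\ZR_l(G/\mathrm{Tor}(G))$ with $\ZR_l(G/\langle\mathrm{Tor}(G)\rangle)$ and $\ZR(G/\mathrm{Tor}(G))$ with $\ZR(G/\langle\mathrm{Tor}(G)\rangle^N)$; in either case the relevant generated (normal) subgroup allows one to invoke Proposition \ref{quotient_map} if one wishes to realise the right-hand side as a space of preorders on a genuine quotient group. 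There is no real obstacle beyond this notational clarification: the whole lemma is a one-line consequence of the torsion observation.
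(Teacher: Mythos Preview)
Your proposal is correct and follows the same core argument as the paper: show $\mathrm{Tor}(G)\subset G_\preceq$ via the chain $1\preceq u\preceq\cdots\preceq u^n=1$, then conclude. The paper simply invokes Proposition \ref{quotient_map} with $H=\mathrm{Tor}(G)$, whereas you go through Definition \ref{ZR_rel} and are more careful about the fact that $\mathrm{Tor}(G)$ need not be a (normal) subgroup in the non-abelian case; this extra care is a genuine improvement over the paper's presentation, but the underlying idea is identical.
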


\begin{proof}
Let $u\in \Tor(G)$ be an element of  order $n$. Let $\preceq\in\ZR_*(G)$. Then if $1\preceq u$ we have
$$1\preceq u\preceq u^2\preceq\cdots\preceq u^n=1.$$
Thus $\Tor(G)\subset G_{\preceq}$ for every $\preceq\in \ZR_*(G)$. Therefore this lemma is a particular case of Proposition \ref{quotient_map} since where $H=\Tor(G)$.
\end{proof}

\begin{lem}\label{extension_Q_ev}
Let $G$ be an abelian   group. Then the restriction map
$$\ZR(\Q\otimes G)\lgm \ZR(G)$$
is an increasing Z-homeomorphism and I-homeomorphism.
\end{lem}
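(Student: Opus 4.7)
The plan is to first reduce to the torsion-free case via the previous lemma, so that the canonical map $G\hookrightarrow \Q\otimes G$ is injective and every element of $\Q\otimes G$ is of the form $\tfrac{1}{N}u$ with $u\in G$ and $N\in\Z_{>0}$. I would then construct an explicit inverse to the restriction map by extending any preorder on $G$ uniquely to $\Q\otimes G$.

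The crucial observation is that for any abelian group $H$ equipped with a preorder $\preceq$ and any $N\in\Z_{>0}$,
$$u\preceq v\ \Longleftrightarrow\ Nu\preceq Nv \qquad \forall u,v\in H.$$
The forward direction is immediate from left invariance. For the reverse, suppose $Nu\preceq Nv$ but $v\prec u$; then iterating the strict version of Lemma \ref{lem1} gives $Nv\prec Nu$, a contradiction. Conceptually this is precisely torsion-freeness of the ordered abelian group $H^\preceq = H/G_\preceq$.

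Using this, I would define the inverse of the restriction map as follows: for $\preceq\in\ZR(G)$ and $x,y\in\Q\otimes G$, pick any $N\in\Z_{>0}$ with $Nx, Ny\in G$ and set $x\wdt\preceq y$ iff $Nx\preceq Ny$. The observation makes this independent of $N$, and $\wdt\preceq$ is routinely checked to be a bi-invariant preorder on $\Q\otimes G$ extending $\preceq$. For uniqueness, if $\preceq'\in\ZR(\Q\otimes G)$ also extends $\preceq$, then applying the observation inside $\Q\otimes G$ yields
$$x\preceq' y\iff Nx\preceq' Ny\iff Nx\preceq Ny\iff x\wdt\preceq y.$$
Hence the restriction map is a bijection, and it clearly preserves refinement in both directions.

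For the topological statements, the observation implies $\O_x^{\Q\otimes G}=\O_{Nx}^{\Q\otimes G}$ and $\U_x^{\Q\otimes G}=\U_{Nx}^{\Q\otimes G}$ whenever $Nx\in G$. Thus the families $\{\O_u,\U_u:u\in G\}$ already form bases for the Z- and I-topologies on $\ZR(\Q\otimes G)$, and under the restriction bijection they correspond precisely to the analogous bases on $\ZR(G)$. This delivers both homeomorphism statements at once. The main obstacle in the whole argument is the key observation above; everything else is either a formal verification or follows by transporting bases through a bijection.
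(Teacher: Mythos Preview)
Your proof is correct and follows essentially the same route as the paper's: reduce to the torsion-free case, build an explicit inverse by declaring $\frac{1}{N}u\wdt\preceq\frac{1}{N}v$ iff $u\preceq v$, and then match the subbasic open sets $\O_x$ (resp.\ $\U_x$) on $\Q\otimes G$ with the $\O_{Nx}$ (resp.\ $\U_{Nx}$) on $G$. The only difference is that you make explicit the well-definedness step (the equivalence $u\preceq v\Leftrightarrow Nu\preceq Nv$) that the paper leaves implicit in its definition of the inverse map.
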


\begin{proof}
By the previous lemma we may assume that $G$ is torsion-free. Thus $G$ can be seen as a subgroup of $\Q\otimes G$ through the map $u\in G\lgm 1\otimes u\in\Q\otimes G$.\\
We define the map
$$\phi:\ZR(G)\lgw \ZR(\Q\otimes G)$$
by
$$\forall\  \left(\frac{n}{m}\otimes u,\frac{p}{m}\otimes v \right) \in \left(\Q\otimes G \right)^2, \forall \preceq \in \ZR(G), \frac{n}{m}\otimes u\ \phi(\preceq)\ \frac{p}{m}\otimes v \Longleftrightarrow nu\preceq pv.$$
It is bijective since its inverse is the restriction map
$$\preceq\in\ZR(\Q\otimes G)\lgw \preceq_{|G}\in\ZR(G).$$
We have
$\phi^{-1}(\mathcal O_{\frac{n}{m}\otimes u})=\mathcal O_{nu}$
and $(\phi^{-1})^{-1}(\mathcal O_{u})=\mathcal O_{1\otimes u}$
for every $u\in G$, $m$, $n\in\Z$.\\
It is straightforward to check that $\phi$ is increasing.
\end{proof}




\subsection{Rank and degree of a preorder}

\begin{defi}
Let $G$ be a group and $\preceq\in \ZR_*(G)$. We denote by $\#\Ra_*(\preceq)$ the cardinal of $\Ra_*(\preceq)$ (as an initial ordinal). We define the rank of $\preceq$ in $\ZR_*(G)$ to be 
$$\rk_*(\preceq):=\left\{\begin{array}{cc}\displaystyle\#\Ra_*(\preceq)-1 & \text{ if this cardinal is finite} \\
\displaystyle\#\Ra_*(\preceq) & \text{ if this cardinal is infinite}  \end{array}\right.$$
The subset of $\ZR_*(G)$ of preorders of rank equal to $r$ (resp. greater or equal to $r$) is denoted by $\ZR^r_*(G)$ (resp. $\ZR^{\geq r}_*(G)$). 

\end{defi}

\begin{defi}
Let $G$ be a group and $\preceq\in \ZR_*(G)$. The degree of $\preceq$ in $\ZR_*(G)$ is 
$$\deg_*(\preceq):=\left\{\begin{array}{cc}\displaystyle\sup_{\preceq'\in\Ord_*(G))\cap\Raf_*(\preceq)}\#\left(\Ra_*(\preceq')\setminus\Ra_*(\preceq)\right)-1 & \text{ if this supremum is finite} \\
\displaystyle\sup_{\preceq'\in\Ord_*(G))\cap\Raf_*(\preceq)}\#\left(\Ra_*(\preceq')\setminus\Ra_*(\preceq)\right) & \text{ if this supremum is infinite}  \end{array}\right.$$
The subset of $\ZR_*(G)$ of preorders of degree  equal to $d$ (resp. less or equal to $d$) is denoted by $^d\!\ZR_*(G)$ (resp. $^{\leq d}\!ZR_*(G)$).

\end{defi}

\begin{rem}
By Definition \ref{def_isolated} and Proposition \ref{lex_order2}, $\Ra_l(\preceq)$ is in bijection with the set of $\preceq$-isolated subgroups of $G$. By Lemma \ref{inc_subgroups}, this bijection is an increasing map. In particular the set
$$\{H\subset G\mid H \preceq\text{-isolated subgroup}\}$$
is totally ordered under inclusion. 
\end{rem}

\subsection{Action on the set of preorders}

\begin{defi}
Let $G$ be a group and let $\Aut(G)$ be the group of automorphisms of $G$. Then there is a left action of $\Aut(G)$  on $\ZR_*(G)$ defined as follows:
$$ (\phi,\preceq)\in \Aut(G)\times\ZR_*(G)\lgm\  \preceq_\phi=:\a_\phi(\preceq)$$
defined by
$$ \forall u,v\in G,\ u  \preceq_\phi v \text{ if } \phi(u)\preceq \phi(v).$$
\end{defi}

\begin{rem}
Let $G$ be a group. In fact the action of $\Inn(G)$ on $\ZR(G)$ is trivial. Therefore, if we consider only bi-invariants preorders, the previous action induces an action of the outer automorphisms group $\Out(G)$ on $\ZR(G)$.
\end{rem}

\begin{lem}\label{rank_deg_action}
The rank and the degree are preserved by this action.
\end{lem}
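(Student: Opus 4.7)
The plan is to show that the action $\a_\phi$ is an order-preserving bijection of $\ZR_*(G)$ with respect to the refinement order $\LEQ$, and then deduce the preservation of rank and degree directly from their definitions in terms of cardinalities of $\Ra_*(\preceq)$.

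First I would check the basic formal properties: for $\phi \in \Aut(G)$ the map $\a_\phi : \ZR_*(G) \to \ZR_*(G)$ is well-defined (since $\phi$ is a group homomorphism, left- and bi-invariance transfer), and $\a_{\phi^{-1}}$ is its two-sided inverse; in particular $\a_\phi$ is a bijection. Then I would verify that $\a_\phi$ preserves $\LEQ$: if $\preceq_1 \LEQ \preceq_2$ and $u \preceq_{2,\phi} v$, then $\phi(u) \preceq_2 \phi(v)$, hence $\phi(u) \preceq_1 \phi(v)$, so $u \preceq_{1,\phi} v$; that is $\preceq_{1,\phi} \LEQ \preceq_{2,\phi}$. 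Applying the same argument to $\phi^{-1}$ shows $\a_\phi$ is actually an order-isomorphism.

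From this order-isomorphism property, $\a_\phi$ restricts to a bijection
\[
\a_\phi : \Ra_*(\preceq) \;\longrightarrow\; \Ra_*(\a_\phi(\preceq)),
\]
and in particular $\#\Ra_*(\preceq) = \#\Ra_*(\preceq_\phi)$, proving $\rk_*(\preceq) = \rk_*(\preceq_\phi)$. For the degree, I would additionally note that $\a_\phi$ sends orders to orders (antisymmetry is preserved since $\phi$ is injective), so it restricts to a bijection between $\Ord_*(G) \cap \Raf_*(\preceq)$ and $\Ord_*(G) \cap \Raf_*(\preceq_\phi)$, and for each such $\preceq' \in \Raf_*(\preceq)$ the restriction gives a bijection
\[
\Ra_*(\preceq') \setminus \Ra_*(\preceq) \;\longrightarrow\; \Ra_*(\a_\phi(\preceq')) \setminus \Ra_*(\preceq_\phi).
\]
Taking the supremum over all such $\preceq'$ on both sides yields $\deg_*(\preceq) = \deg_*(\preceq_\phi)$.

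There is no real obstacle here; the entire content is that $\a_\phi$ is an order-automorphism of $(\ZR_*(G), \LEQ)$, after which both invariants are transported formally. The only mild points to be careful about are that the definition of $\preceq_\phi$ uses $\phi$ on both sides of the inequality (so the implication in the refinement step goes through cleanly), and that rank and degree are defined through an ``$-1$'' shift in the finite case and without the shift in the infinite case, so one should observe that bijections preserve this distinction since they preserve both finiteness and cardinality.
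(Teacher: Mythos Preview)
Your proof is correct and follows essentially the same approach as the paper: both establish that $\a_\phi$ restricts to a bijection $\Ra_*(\preceq)\to\Ra_*(\preceq_\phi)$ by checking order-preservation and using $\a_{\phi^{-1}}$ as inverse. Your treatment of the degree is in fact more explicit than the paper's own proof, which only spells out the rank case and leaves the degree implicit.
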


\begin{proof}
Let $\preceq \in \ZR_*(G)$ and $\phi\in\Aut(G)$. We consider the map :
$${\a_\phi}_{|_{\Ra_*(\preceq)}}:\preceq'\in \Ra_*(\preceq)\lgm \, \preceq'_\phi.$$
It is enough to show that the image of  ${\a_\phi}_{|_{\Ra_*(\preceq)}}$ is $\Ra_*(\preceq_\phi)$ and that ${\a_\phi}_{|_{\Ra_*(\preceq)}}$ is injective. \\
Let $\preceq' \in \Ra_*(\preceq)$, and $u,v \in G$ such that $u\preceq_\phi v$, then $\phi(u) \preceq \phi(v)$ hence $\phi(u) \preceq' \phi(v)$ since $\preceq' \LEQ \preceq$. It means that $u\preceq'_\phi v$ and the image of ${\a_\phi}_{|_{\Ra_*(\preceq)}}$ is in $\Ra_*(\preceq_\phi)$.\\
Now let $\preceq'$ be an element of $\Ra_*(\preceq_\phi)$. We can see that $\preceq'_{\phi^{-1}} \in \Ra_*(\preceq)$. Therefore the image of ${\a_\phi}_{|_{\Ra_*(\preceq)}}$ is $\Ra_*(\preceq_\phi)$.\\
Let $\preceq_1$ and $\preceq_2$ be two elements of $\Ra_*(\preceq)$. We can assume $\preceq_1 \LEQ \preceq_2$. Then we have $\preceq_{1\phi} \LEQ \preceq_{2\phi}$ and they are equal if and only if $\preceq_1=\preceq_2$, hence ${\a_\phi}_{|_{\Ra_*(\preceq)}}$ is injective.
\end{proof}


\begin{lem}
For every $\phi\in \Aut(G)$, the map 
$$\a_\phi:\ZR_*(G)\lgw \ZR_*(G)$$
$$\preceq\,\lgm \, \preceq_\phi$$
is an increasing continuous map for the Z-topology, the I-topology, and the P-topology.
\end{lem}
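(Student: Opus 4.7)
The plan is to verify the two claims (order-preserving and continuity for the three topologies) separately, each by a direct unwinding of the definitions; no deep machinery is required.

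For the order-preserving part, I would start with $\preceq_1\LEQ\preceq_2$ and show $\preceq_{1\phi}\LEQ\preceq_{2\phi}$. Given $u,v\in G$ with $u\preceq_{2\phi} v$, the definition of $\a_\phi$ gives $\phi(u)\preceq_2\phi(v)$; the hypothesis $\preceq_1\LEQ\preceq_2$ (i.e., $\preceq_2$ refines $\preceq_1$) then yields $\phi(u)\preceq_1\phi(v)$, and unwinding once more gives $u\preceq_{1\phi}v$. Hence $\preceq_{2\phi}$ refines $\preceq_{1\phi}$, which is exactly $\preceq_{1\phi}\LEQ\preceq_{2\phi}$.

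For continuity, I would reduce to checking the preimage of a basic open set. The key identity to establish is
$$\a_\phi^{-1}(\O_u)=\O_{\phi(u)}\quad\text{and}\quad \a_\phi^{-1}(\U_u)=\U_{\phi(u)}\qquad(u\in G).$$
Both follow immediately from the definitions: $\preceq\in\a_\phi^{-1}(\O_u)$ iff $u\succeq_\phi 1$ iff $\phi(u)\succeq 1$ iff $\preceq\in\O_{\phi(u)}$, and likewise with strict inequality for $\U_u$. Since $\{\O_u\}_{u\in G}$ is a basis for the Z-topology, this proves Z-continuity; since $\{\U_u\}_{u\in G}$ is a basis for the I-topology, this proves I-continuity; and since $\{\O_u,\U_u\}_{u\in G}$ is a subbasis for the P-topology, P-continuity follows as well.

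There is no real obstacle here; the lemma is essentially a bookkeeping consequence of the fact that $\a_\phi$ acts on the defining data of a preorder by a bijective relabeling via $\phi$, so every basic open set pulls back to another basic open set of the same type. The only thing to keep track of is the direction of the refinement order when verifying the increasing property, since refinement is phrased with an implication that runs opposite to the inclusion of preorder monoids.
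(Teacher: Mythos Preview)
Your proof is correct and follows essentially the same approach as the paper's: both reduce continuity to the fact that basic open sets $\O_u$ and $\U_u$ are carried to basic open sets of the same type under $\a_\phi$ (the paper states $\a_\phi(\O_u)=\O_{\phi^{-1}(u)}$, which is the image version of your preimage identity $\a_\phi^{-1}(\O_u)=\O_{\phi(u)}$), and both derive the order-preserving property from the same direct unwinding of the refinement relation (the paper phrases it via the previously established fact that $\a_\phi$ maps $\Ra_*(\preceq)$ onto $\Ra_*(\preceq_\phi)$). Your write-up is slightly more self-contained, but there is no substantive difference.
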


\begin{proof}
Let $\preceq \in \ZR_*(G)$ and let $u \in G$ such that $\preceq \in \mathcal{U}_{u}\subset \mathcal{O}_{u}$. Then $\a_\phi(\mathcal{U}_{u})= \mathcal{U}_{\phi^{-1}(u)}$ and $\a_\phi(\mathcal{O}_{u})= \mathcal{O}_{\phi^{-1}(u)}$. So the map $\a_\phi$ is continuous for the Z-topology, the I-topology and the P-topology.\\
It is an increasing map since the image of ${\a_\phi}_{|_{\Ra_*(\preceq)}}$ is $\Ra_*(\preceq_\phi)$, as shown in the proof of Lemma \ref{rank_deg_action}.
\end{proof}

\begin{ex}
The action of $\Aut(G)$ on $\ZR_*(G)$ is not faithful in general. For instance let us consider $G=\Q$ and $\phi\in\Aut(\Q)$ be defined by
$\phi(x)=2x$.
Then $\preceq_\phi=\preceq$ for every $\preceq\in \ZR(G)$.\\
This also shows that the action is not free.
\end{ex}


\begin{lem}\label{lemma_stab}
We denote the stabilizer of $\preceq\in\ZR_*(G)$ by $\Aut(G)_\preceq$. We have
\begin{itemize}
\item[i)] $\forall \phi\in\Aut(G)_\preceq$, $\phi(G_\preceq)= G_\preceq$,
\item[ii)] If $G_\preceq$ is normal then
$$\Aut(G)_\preceq=\{\phi\in\Aut(G) \mid \phi(G_\preceq)=G_\preceq \text{ and }\phi_{|_{G/G_\preceq}}\in\Aut(G/G_\preceq)_\preceq\}.$$
\end{itemize}
\end{lem}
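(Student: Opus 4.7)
My plan for (i) is to directly unpack the definition. Being in $\Aut(G)_\preceq$ means $\preceq_\phi=\preceq$, i.e.\ $u\preceq v\Longleftrightarrow\phi(u)\preceq\phi(v)$ for all $u,v\in G$. Specializing to $v=1$ and using $\phi(1)=1$, this yields the biconditional $u\sim_\preceq 1\Longleftrightarrow\phi(u)\sim_\preceq 1$, i.e.\ $u\in G_\preceq\Longleftrightarrow\phi(u)\in G_\preceq$, from which $\phi(G_\preceq)=G_\preceq$ follows immediately.

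For (ii) I will use Proposition~\ref{quotient_map} to identify $\preceq$ with an induced preorder $\preceq'$ on the quotient $G/G_\preceq$, which the reader can check is actually an order: $\ovl u\sim_{\preceq'}\ovl 1$ forces $u\in G_\preceq$, so $\ovl u=\ovl 1$. For the inclusion $\subset$, given $\phi\in\Aut(G)_\preceq$, part (i) lets me descend $\phi$ to $\ovl\phi\in\Aut(G/G_\preceq)$, and the compatibility $\ovl u\preceq'\ovl v\Longleftrightarrow u\preceq v$ directly transports the $\phi$-stability of $\preceq$ to $\ovl\phi$-stability of $\preceq'$ via a short chain of equivalences.

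The heart of the argument is the reverse inclusion $\supset$. Given $\phi$ with $\phi(G_\preceq)=G_\preceq$ and $\ovl\phi\in\Aut(G/G_\preceq)_{\preceq'}$, my plan is to verify $u\preceq v\Longleftrightarrow\phi(u)\preceq\phi(v)$ by case analysis on whether $v^{-1}u\in G_\preceq$. In the ``inside'' case, $\phi(v^{-1}u)=\phi(v)^{-1}\phi(u)\in G_\preceq$ as well, so both $u\sim_\preceq v$ and $\phi(u)\sim_\preceq\phi(v)$ hold and both sides of the biconditional are automatically true. In the ``outside'' case, $\ovl u\neq\ovl v$ in $G/G_\preceq$ and likewise $\ovl{\phi(u)}\neq\ovl{\phi(v)}$, so the biconditional reduces to a strict comparison in the quotient, which is preserved by $\ovl\phi$ by hypothesis. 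The only subtle point — not really an obstacle — is observing that $\preceq'$ is genuinely an order, which is precisely what turns non-equality in the quotient into strict comparability and lets the reduction to $\ovl\phi$-invariance go through cleanly.
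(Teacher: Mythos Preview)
Your proof is correct and follows essentially the same approach as the paper's. For (i) both arguments unwind the definition of $\preceq_\phi=\preceq$ at $v=1$; for (ii) both arguments split according to whether the element lands in $G_\preceq$ or not and, in the latter case, pass to the induced order on $G/G_\preceq$. Your write-up is in fact a bit more complete: the paper only treats the inclusion $\supset$ explicitly (leaving $\subset$ as ``straightforward''), and your explicit remark that the induced $\preceq'$ on $G/G_\preceq$ is an \emph{order} is exactly the observation that makes the ``outside'' case go through.
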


\begin{proof}
Let $\phi\in\Aut(G)_\preceq$ and $u\in G_\preceq$. We have
$$1\preceq u \text{ and } u\preceq 1.$$
Since $\preceq_\phi=\preceq$ we have
$$1\preceq \phi(u) \text{ and } \phi(u)\preceq 1$$
that is, $\phi(u)\in G_\preceq$. By replacing $\phi$ by $\phi^{-1}$ we prove i).\\
\\
Now let $\phi\in \Aut(G)$ be such that $\phi(G_\preceq)=G_\preceq$ and $\phi_{|_{G/G_\preceq}}\in\Aut(G/G_\preceq)_\preceq$. Let $u\in G$ such that $u\preceq 1$.\\
 If $1\preceq u$, $u\in G_\preceq$. Since $\phi(G_\preceq)=G_\preceq$ we have $\phi(u)\preceq 1$.\\
 If $1\prec u$, the class of $u$ in $G/G_\preceq$, denoted by $\ovl u$, is not trivial. The preorder $\preceq$ induces a preorder on $G_\preceq$ by Proposition \ref{exact_seq}, that we still denote by $\preceq$. Therefore we have $1\prec \ovl u$. Since $\phi_{|G/G_\preceq}\in\Aut(G/G_\preceq)_\preceq$, we have
 $1\prec \phi_{|G/G_\preceq}(\ovl u)=\ovl{\phi(u)}.$
 Hence $1\prec_\phi u$. Therefore $\preceq_\phi=\preceq$ and $\phi\in\Aut(G)_\preceq$.\\
 The reverse inclusion is straightforward to check.

\end{proof}


\subsection{A metric: the case of countable groups}

In the case of a countable group $G$, Sikora \cite{S} proved that the Zariski topology on $\Ord_*(G)$ is a metric topology. We extend here this result to $\ZR_*(G)$ endowed with the Patch topology.

\begin{defi}\label{height}
Let $G$ be a countable group. Let
$$\mathcal G_1\subset \mathcal G_2\subset \cdots\subset \mathcal G_n\subset\cdots $$
be a chain  of finite subsets of $G$ such that $\bigcup\limits_{n\geq 1}\mathcal G_n=G$. We denote this chain by $\mathcal G$.
For a given $u\in G$,  we define the height of $u$ with respect to $\mathcal G$ as
$$\he_{\mathcal G}(u)=\min\{n\in\N^*\mid u\in \mathcal G_n\}.$$
\end{defi}

\begin{defi}
Let $\preceq_1$, $\preceq_2\in\ZR_*(G)$ where $G$ is a countable group.  Let us fix a chain $\mathcal G$ as in Definition \ref{height}. If $\preceq_1\neq\preceq_2$ we set
$$d_{\mathcal G}(\preceq_1,\preceq_2)=\frac{1}{n}$$ if $\preceq_{1|_{\mathcal G_{n-1}}}=\preceq_{2|_{\mathcal G_{n-1}}}$ but $\preceq_{1|_{\mathcal G_{n}}}\neq\preceq_{2|_{\mathcal G_{n}}}$. And we set $d_{\mathcal G}(\preceq_1,\preceq_1)=0$.
\end{defi}

\begin{prop}\label{prop_metric}
Let $G$ be a countable group and $\mathcal G$ be a chain as in Definition \ref{height}. Then we have:
\begin{itemize}
\item[i)] The function $d_{\mathcal G}$ is an ultrametric. 
\item[ii)] The topology defined on $\ZR_*(G)$ by  $d_{\mathcal G}$ is   the  Patch topology. In particular, it does not depend on the choice of $\mathcal G$.
\end{itemize}
\end{prop}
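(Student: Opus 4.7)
The plan for part (i) is to check the three axioms of an ultrametric in turn. Symmetry is immediate from the definition since the condition ``$\preceq_{1|_{\mathcal{G}_k}} = \preceq_{2|_{\mathcal{G}_k}}$'' is symmetric in $\preceq_1$ and $\preceq_2$. Identity of indiscernibles: if $\preceq_1 \neq \preceq_2$, then there exist $u,v \in G$ on which they disagree; picking $n$ minimal with $u,v \in \mathcal{G}_n$ gives a positive value for $d_\mathcal{G}$. The strong triangle inequality is the crucial axiom: assume $d_\mathcal{G}(\preceq_1,\preceq_2)\leq 1/n$ and $d_\mathcal{G}(\preceq_2,\preceq_3)\leq 1/n$. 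By definition both pairs of preorders agree on $\mathcal{G}_{n-1}$, so their restrictions to $\mathcal{G}_{n-1}$ coincide, and consequently $\preceq_{1|_{\mathcal{G}_{n-1}}} = \preceq_{3|_{\mathcal{G}_{n-1}}}$, yielding $d_\mathcal{G}(\preceq_1,\preceq_3)\leq 1/n$. Taking the infimum over suitable $n$ gives $d_\mathcal{G}(\preceq_1,\preceq_3) \leq \max(d_\mathcal{G}(\preceq_1,\preceq_2), d_\mathcal{G}(\preceq_2,\preceq_3))$.

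For part (ii), I will show that the basis of open balls of $d_\mathcal{G}$ and the basis of $P$-open sets generate the same topology, by proving each inclusion. First, I will fix $\preceq \in \ZR_*(G)$ and describe the open ball $B(\preceq, 1/n) = \{\preceq' : d_\mathcal{G}(\preceq,\preceq')<1/n\}$ explicitly: it is exactly the set of preorders $\preceq'$ whose restriction to $\mathcal{G}_n$ equals that of $\preceq$. Translating this condition through the preorder monoid: $\preceq'_{|_{\mathcal{G}_n}} = \preceq_{|_{\mathcal{G}_n}}$ holds if and only if for every pair $(u,v) \in \mathcal{G}_n^2$, the element $u^{-1}v$ lies in the same one of the three sets $\{x \succ 1\}$, $\{x \sim 1\}$, $\{x \prec 1\}$ for both preorders. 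Since each of these three conditions is either $\U_{u^{-1}v}$, $\O_{u^{-1}v} \cap \O_{v^{-1}u}$, or $\U_{v^{-1}u}$, the ball $B(\preceq,1/n)$ is a \emph{finite} intersection of basic $P$-open sets, hence $P$-open. This proves that the metric topology is coarser than (or equal to) the Patch topology.

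Conversely, I must show each basic $P$-open $\O_u$ and $\U_u$ is open in the metric topology. Given $\preceq \in \O_u$, choose $n$ large enough so that $1, u \in \mathcal{G}_n$ (possible since $\bigcup_k \mathcal{G}_k = G$). Then any $\preceq' \in B(\preceq,1/(n+1))$ agrees with $\preceq$ on $\mathcal{G}_n$, so in particular $1 \preceq' u$, giving $\preceq' \in \O_u$. The same argument works for $\U_u$. This shows the metric topology is finer than (or equal to) the Patch topology, so the two coincide.

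The only subtle point — more a matter of care than of obstacle — is getting the index bookkeeping right between ``$d_\mathcal{G} = 1/n$'' (agreement on $\mathcal{G}_{n-1}$) and ``$d_\mathcal{G} < 1/n$'' (agreement on $\mathcal{G}_n$); everything else is a direct unpacking of definitions. Independence of the metric topology from the choice of $\mathcal{G}$ follows for free from the identification with the Patch topology.
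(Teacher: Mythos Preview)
Your proof is correct and follows essentially the same approach as the paper's: verify the ultrametric axioms directly, then show each metric ball is a finite intersection of basic $P$-open sets and conversely that each $\O_u$, $\U_u$ contains a small ball around each of its points. The only cosmetic difference is that you express $B(\preceq,1/n)$ via pairs $(u,v)\in\mathcal G_n^2$ and the elements $u^{-1}v$, whereas the paper intersects over single elements $u\in\mathcal G_n$ compared against $1$; your version is arguably a bit more careful, but the strategy is identical.
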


\begin{proof}
Clearly $d_{\mathcal G}$ is non negative, reflexive and symmetric. The ultrametric inequality is straightforward to check.
Therefore we only need to prove ii).\\
\\
Now let $n\in\N^*$ and $\preceq\in \ZR_*(G)$. We denote by $B\left(\preceq, \frac{1}{n}\right)$ the open ball centered at $\preceq$ of radius $\frac{1}{n}$ for the metric $d_\mathcal G$. Then $\preceq'\in B\left(\preceq, \frac{1}{n}\right)$ if and only if 
$$\forall u,v\in \mathcal G_n, \ \ \left\{\begin{array}{c} u\preceq v\Longrightarrow u\preceq' v\\ u\prec v \Longrightarrow u\prec' v\end{array}\right.$$
Therefore we have
$$ B\left(\preceq, \frac{1}{n}\right)=\bigcap_{u\in \mathcal G_n, u\succ 1}\mathcal U_{u} \cap \bigcap_{u\in \mathcal G_n, u\succeq 1}\mathcal O_{u} $$
is open for the topology generated by the $\U_{u}$ and the $\O_{u}$.
 Indeed this intersection is finite since the $\mathcal G_n$ are finite. \\
 On the other hand, let $u\in G$. For $\preceq\in\mathcal U_{u}$ we have
$B\left(\preceq, \frac{1}{\he_{\mathcal G}(u)}\right)\subset \mathcal U_{u}$.
For $\preceq\in\mathcal O_{u}$ we have
$B\left(\preceq, \frac{1}{\he_{\mathcal G}(u)}\right)\subset \mathcal O_{u}$.
 Thus the $\mathcal U_{u}$ and the $\O_{u}$ are open for the topology induced by $d_{\mathcal G}$.

\end{proof}

\begin{rem}\label{conv2}
Let $G$ be a countable group and $\{\mathcal G_k\}_k$ be a chain as in Definition \ref{height}. Let $(\preceq_n)_{n\in\N}$ be a sequence of preorders on  $G$ that converges to  $\preceq\in\ZR_*(G)$ for the Patch topology. Then
$$\forall k\in\N,\ \exists N\in\N,\ \forall n\geq N,\ G_{\preceq_n}\cap \mathcal G_k=G_{\preceq}\cap\mathcal G_k.$$
\end{rem}

\subsection{Cantor sets}
Assume that $G$ is a countable group. Then $\ZR_*(G)$, endowed with the Patch topology, is a metrizable compact space. Moreover it is totally disconnected:
\begin{lem}\label{tot_disc}
Any subspace $E\subset \ZR_*(G)$ is totally disconnected for the Patch topology.
\end{lem}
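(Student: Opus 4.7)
The plan is to exploit the fact that the Patch topology has, by definition, a subbasis consisting of the sets $\mathcal{O}_u$ and $\mathcal{U}_u$ for $u\in G$, and that each of these subbasic sets is actually clopen. Once we know the Patch topology is generated by a collection of clopen sets that separates points, total disconnectedness of every subspace follows at once.

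First I would observe that for every $u\in G$, the set $\mathcal{O}_u$ is both open (it belongs to the defining subbasis) and closed (its complement is $\mathcal{U}_{u^{-1}}$, which is also a subbasic open set, as noted in the remark immediately after the definition of the Inverse topology). Symmetrically, each $\mathcal{U}_u$ is clopen. Hence the Patch topology admits a subbasis of clopen sets; the same therefore holds for the subspace topology on any $E\subset\ZR_*(G)$.

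Next I would verify that these clopen subbasic sets separate any two distinct preorders. If $\preceq_1\neq \preceq_2$ in $\ZR_*(G)$, then by Remark~\ref{rem1} one of them fails to refine the other; say $\preceq_2$ does not refine $\preceq_1$. Then there exist $u,v\in G$ with $u\preceq_2 v$ and $v\prec_1 u$. Setting $w:=v^{-1}u$, we obtain $w\succ_1 1$ and $w\preceq_2 1$, so $\preceq_1\in\mathcal{O}_w$ while $\preceq_2\in\mathcal{U}_{w^{-1}}=\ZR_*(G)\setminus\mathcal{O}_w$.

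Finally, given any $E\subset\ZR_*(G)$ and any two distinct points $\preceq_1,\preceq_2\in E$, the pair $E\cap\mathcal{O}_w$ and $E\cap\mathcal{U}_{w^{-1}}$ constructed above forms a partition of $E$ into two disjoint nonempty sets, each of which is open in $E$ (and in fact clopen). Any connected subset of $E$ containing both $\preceq_1$ and $\preceq_2$ would therefore split nontrivially into two relatively open pieces, contradicting connectedness. Hence the connected component in $E$ of every point is the singleton of that point, so $E$ is totally disconnected. The only mildly delicate step is the separation argument in the second paragraph, which already appeared implicitly in the proof that $\ZR_*(G)$ is $\mathsf{T}_0$; here we simply exploit the extra fact that the Patch topology has both $\mathcal{O}_w$ and its complement $\mathcal{U}_{w^{-1}}$ available as open sets.
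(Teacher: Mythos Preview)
Your approach is essentially the same as the paper's: exhibit, for any two distinct preorders, a clopen set of the form $\mathcal{O}_w$ (with complement $\mathcal{U}_{w^{-1}}$) separating them. There is, however, a small slip in your final assignment. From $w\succ_1 1$ and $w\preceq_2 1$ you only get $w^{-1}\succeq_2 1$, not $w^{-1}\succ_2 1$; if $w\sim_{\preceq_2}1$ then $\preceq_2\notin\mathcal{U}_{w^{-1}}$. The correct separation is the mirror of what you wrote: $\preceq_1\in\mathcal{U}_w$ (strict, since $w\succ_1 1$) and $\preceq_2\in\mathcal{O}_{w^{-1}}=\ZR_*(G)\setminus\mathcal{U}_w$. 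With this swap your argument is correct and matches the paper's proof.
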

\begin{proof}
Indeed, if $\preceq_1$, $\preceq_2\in E$, $\preceq_1\neq\preceq_2$, there is $u\in G$ such that $u\succeq_1 1$ and $1\succ_2 u$ (eventually after permutation of $\preceq_1$ and $\preceq_2$). Therefore $\preceq_1\in \O_{u}$ and $\preceq_2\in \U_{u^{-1}}$. But $\O_{u}\cap\U_{u^{-1}}=\emptyset$ and $\O_{u}\cup\U_{u^{-1}}=\ZR_*(G)$.\\
\end{proof}

Moreover any closed subset of $\ZR_*(G)$ is a also a metrizable totally disconnected compact space. Therefore, a closed subset of $\ZR_*(G)$ is a Cantor set if and only if it is a perfect space, that is,  it does not have isolated points.

There are several cases for which $\Ord_*(G)$ is known to be a Cantor set. Here is a non complete list of some examples:
\begin{itemize}
\item The space $\Ord(\Q^n)$ for $n\geq 2$ is a Cantor set \cite{S}.
\item The space $\Ord_l(F_n)$ for $n\geq 2$ is a Cantor set, where $F_n$ is the free group generated by $n$ elements \cite{MC}, \cite{N}.
\item The space $\Ord_*(G)$, where $G$ is a countable, torsion-free, nilpotent group which is not rank-1 abelian, is a Cantor set \cite{MW} and \cite{DNR}.
\item The space $\Ord_l(G)$, where $G$ is a compact hyperbolic surface group, is a Cantor set \cite{ABR}.
\end{itemize}
In  \ref{Q^n} we will see that $\ZR(\Q^n)$, for $\geq 2$, contains infinitely many Cantor subsets.


\section{Examples} \label{section exemples}
\subsection{The $\Q$-vector spaces} 
By Lemma \ref{extension_Q_ev}, in order to study $\ZR(G)$ for an abelian group, we only need to study $\ZR(\Q\otimes G)$. Therefore we may assume that $G$ is  a $\Q$-vector space.
We begin with  the following lemma:

\begin{lem}\label{Q-ev}
Let $H$ be a normal subgroup of a $\Q$-vector space $G$ such that $\Tor(G/H)=\{0\}$. Then $H$ is a $\Q$-subspace of $G$.
\end{lem}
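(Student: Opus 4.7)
The plan is to show directly that $H$ is closed under scalar multiplication by arbitrary rationals. Since $H$ is already a subgroup of the abelian group $G$, it is automatically closed under scalar multiplication by integers; the only thing to check is closure under multiplication by inverses of nonzero integers, i.e., that for every $h \in H$ and every $n \in \Z \setminus \{0\}$, the element $\frac{1}{n} h \in G$ (which exists because $G$ is a $\Q$-vector space, hence in particular divisible and torsion-free) actually lies in $H$.

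To verify this, I would fix $h \in H$ and $n \in \Z \setminus \{0\}$, set $x := \frac{1}{n} h \in G$, and look at the image $\bar x \in G/H$. By construction $n x = h \in H$, so $n \bar x = 0$ in $G/H$. Since by hypothesis $\Tor(G/H) = \{0\}$, this forces $\bar x = 0$, that is, $x \in H$. Combining this with integer closure, for any rational $q = m/n$ with $m, n \in \Z$, $n \neq 0$, we get $q h = m \left(\tfrac{1}{n} h\right) \in H$, which is exactly the claim that $H$ is a $\Q$-subspace.

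There is essentially no obstacle here; the statement is a formal consequence of divisibility of $G$ together with torsion-freeness of the quotient. The only thing worth being careful about is the role of the assumption that $G$ is a $\Q$-vector space (rather than merely divisible): it guarantees both the existence and the uniqueness of the element $\frac{1}{n} h$, so that the scalar multiplication structure on $H$ inherited from $G$ is well defined.
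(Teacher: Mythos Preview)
Your proof is correct and is essentially identical to the paper's: both reduce to showing $\frac{1}{n}h\in H$ by observing that its image in $G/H$ is torsion, hence zero. Your extra remark about uniqueness of $\frac{1}{n}h$ is a welcome clarification but not needed for the argument itself.
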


\begin{proof}
Since $H$ is a subgroup, $H$ is stable by addition and by multiplication by integers. Therefore, we only have to prove that for every $h\in H$ and $n\in\N^*$, $\frac{1}{n}h\in H$.\\
Indeed, for such a $h$ and such a $n$,  the image $g$ of $\frac{1}{n}h$ in $G/H$ is a torsion element since $ng\equiv 0$ modulo $H$. Hence $\frac{1}{n}h\in H$ since $\Tor(G/H)=\{0\}$.
\end{proof}

On the other hand, every $\Q$-subspace of $G$ is a normal subgroup of $G$ with $\Tor(G/H)=\{0\}$.
Therefore, by Example \ref{ex_ab_ord}, the residue groups $G_\preceq$ of preorders $\preceq\in\ZR(G)$ are exactly the $\Q$-subspaces of $G$.

\begin{prop}\label{deg_residue}
Let $G$ be a $\Q$-vector space and $\preceq\in\ZR(G)$. Then
$$\deg(\preceq)=\dim_\Q(G_\preceq).$$
\end{prop}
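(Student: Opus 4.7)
The plan is to combine Proposition \ref{exact_seq}(i) with Lemma \ref{Q-ev} to reduce the computation of $\deg(\preceq)$ to the classical fact that a $\Q$-vector space of dimension $\kappa$ admits orders of rank $\kappa$ and no higher.

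First I would use the increasing bijection $\phi_\preceq : \ZR(G_\preceq) \to \Raf(\preceq)$ of Proposition \ref{exact_seq}(i). It restricts to a bijection between $\Ord(G_\preceq)$ and $\Ord(G) \cap \Raf(\preceq)$: by Lemma \ref{inc_subgroups}, the orders $\preceq'$ refining $\preceq$ are exactly the $\preceq'$ with $G_{\preceq'}$ trivial among the extensions of $\preceq$, which via $\phi_\preceq$ correspond to orders on $G_\preceq$. For $\preceq'' \in \Ord(G_\preceq)$ with image $\preceq' := \phi_\preceq(\preceq'')$, the same bijection carries $\Ra(\preceq'')$ in $\ZR(G_\preceq)$ onto $\Ra(\preceq') \cap \Raf(\preceq)$. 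Since $\Ra(\preceq')$ is totally ordered by Theorem \ref{cor_raf_toset} and contains $\preceq$, this intersection is $(\Ra(\preceq') \setminus \Ra(\preceq)) \sqcup \{\preceq\}$. Comparing cardinalities, the definitions of $\rk$ and $\deg$ are set up so that this shift by one cancels, and taking the supremum over $\preceq'$ yields
$$\deg(\preceq) \;=\; \sup_{\preceq'' \in \Ord(G_\preceq)} \rk(\preceq'').$$

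Next I would compute this supremum on the $\Q$-vector space $V := G_\preceq$. By Proposition \ref{lex_order2}, $\Ra(\preceq'')$ is in order-preserving bijection with the chain of $\preceq''$-convex subgroups of $V$. Since $\preceq''$ is an order, the quotient of $V$ by any such convex subgroup $H$ inherits a total order, hence is torsion-free; Lemma \ref{Q-ev} then forces $H$ to be a $\Q$-subspace of $V$. This gives $\rk(\preceq'') \leq \dim_\Q V$. The bound is realized by a lexicographic order associated to any basis $\{e_i\}_{i \in I}$ of $V$ with $I$ well-ordered of order type $\dim_\Q V$: the $\preceq''$-convex subspaces are the tails $\bigoplus_{i \geq j} \Q e_i$ for $j \in I \cup \{\infty\}$, producing a chain of cardinality $\dim_\Q V$. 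Combining both steps gives $\deg(\preceq) = \dim_\Q G_\preceq$.

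The main obstacle is not any single step but the bookkeeping needed to make the reduction uniform in cardinalities: the off-by-one shifts built into the definitions of $\rk$ and $\deg$ must be threaded correctly both in the finite- and the infinite-dimensional case, and in the latter one has to justify that a lex order from a well-ordered basis realises the dimension as a rank. The substantive algebraic content sits in Lemma \ref{Q-ev}: without it, one would only obtain an abstract chain of convex subgroups, whereas the lemma is precisely what upgrades these to $\Q$-subspaces and thereby ties $\rk(\preceq'')$ to $\dim_\Q V$.
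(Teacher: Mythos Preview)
Your argument is correct and is essentially a fleshed-out version of the paper's own proof. Both rest on Lemma~\ref{Q-ev} as the key algebraic input. The paper's proof is extremely terse: it simply observes that residue groups of preorders on $G$ are exactly the $\Q$-subspaces (combining Lemma~\ref{Q-ev}, Example~\ref{ex_ab_ord}, and Proposition~\ref{quotient_subgroup}), and then invokes Lemma~\ref{inc_subgroups} to identify chains of preorders between $\preceq$ and an order with chains of subspaces of $G_\preceq$, so that the maximal chain length is $\dim_\Q G_\preceq$.

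Your version makes the reduction to $G_\preceq$ explicit via Proposition~\ref{exact_seq}(i), rewrites $\deg(\preceq)$ as $\sup_{\preceq''\in\Ord(G_\preceq)}\rk(\preceq'')$, and then exhibits a lexicographic order realising the bound. This is the same idea packaged slightly differently, and it has the advantage of handling the infinite-dimensional case more transparently than the paper's one-line argument (where the passage from ``residue groups are subspaces'' to ``$\deg=\dim$'' is left to the reader). One small point: for the lower bound you only need that the tails $\bigoplus_{i\geq j}\Q e_i$ are $\preceq''$-convex (which is immediate), not that they exhaust all convex subgroups; the latter actually follows a posteriori from your upper bound $\rk(\preceq'')\leq\dim_\Q V$.
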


\begin{proof}
By Lemma \ref{Q-ev} and Example \ref{ex_ab_ord}, the residue groups of preorders on $G$ are the subvector spaces of $G$. Therefore, by Lemma \ref{inc_subgroups} and Proposition \ref{quotient_subgroup}, we have 
$\dim_\Q(G_\preceq)=\deg(\preceq)$.
\end{proof}

We have the following inequality relating the rank and the degree of a preorder (this can be seen as the counterpart of the inequality concerning valuations given in Remark \ref{Abhyankar}):
\begin{cor}\label{rank-dim}
Let $G$ be a $\Q$-vector space and $\preceq\in \ZR(G)$. Then
$$\rk(\preceq)+\deg(\preceq)\leq \dim_\Q(G).$$
\end{cor}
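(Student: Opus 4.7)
The plan is to combine Proposition \ref{deg_residue}, which identifies $\deg(\preceq)$ with $\dim_\Q(G_\preceq)$, with the bijection (noted in the remark before) between $\Ra(\preceq)$ and the family of $\preceq$-isolated subgroups of $G$. The key point is that in the $\Q$-vector space setting, all these subgroups are in fact $\Q$-subspaces, so one can count dimensions along a chain.

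First, I would invoke Lemma \ref{isolated}: for every $\preceq' \LEQ \preceq$, the residue group $G_{\preceq'}$ is $\preceq$-isolated, and conversely (by Proposition \ref{lex_order2} applied at each level) every $\preceq$-isolated subgroup arises this way. Combined with Proposition \ref{quotient_subgroup} and Lemma \ref{Q-ev}, every residue group $G_{\preceq'}$ satisfies $\Tor(G/G_{\preceq'})=\{0\}$, hence $G_{\preceq'}$ is a $\Q$-subspace of $G$. Thus the map $\preceq' \mapsto G_{\preceq'}$ gives an order-reversing bijection (by Lemma \ref{inc_subgroups}) between $\Ra(\preceq)$ and a totally ordered chain of $\Q$-subspaces of $G$ whose smallest element is $G_\preceq$ and whose largest element is $G$ (corresponding to $\leq_\emptyset$).

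Next, I would treat the finite-rank case. Setting $r:=\rk(\preceq)$, the chain has exactly $r+1$ elements, giving strict inclusions
$$G_\preceq = H_0 \subsetneq H_1 \subsetneq \cdots \subsetneq H_r = G$$
of $\Q$-subspaces. Each strict inclusion of $\Q$-subspaces raises the dimension by at least $1$, so
$$\dim_\Q(G) \geq \dim_\Q(G_\preceq) + r = \deg(\preceq) + \rk(\preceq),$$
where the equality uses Proposition \ref{deg_residue}. This yields the desired inequality.

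Finally, if $\rk(\preceq)$ is infinite, the same chain argument produces infinitely many strictly nested $\Q$-subspaces of $G$ above $G_\preceq$, forcing $\dim_\Q(G)$ to be infinite and to dominate both $\rk(\preceq)$ and $\deg(\preceq)$ as cardinals. I do not anticipate a serious obstacle here; the only subtlety is ensuring that the identification of $\preceq$-isolated subgroups with $\Q$-subspaces is complete, which follows cleanly from Lemma \ref{Q-ev} applied to each residue group.
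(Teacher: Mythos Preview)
Your proof is correct and follows essentially the same route as the paper: map the chain $\Ra(\preceq)$ into a strictly increasing chain of $\Q$-subspaces via $\preceq'\mapsto G_{\preceq'}$ (using Lemmas \ref{inc_subgroups} and \ref{isolated} together with Lemma \ref{Q-ev}), then bound its length by $\dim_\Q(G)-\dim_\Q(G_\preceq)=\dim_\Q(G)-\deg(\preceq)$ via Proposition \ref{deg_residue}. The only minor excess is that you establish a full bijection with the $\preceq$-isolated subgroups, whereas the injectivity of $\preceq'\mapsto G_{\preceq'}$ (Lemma \ref{inc_subgroups}) already suffices for the inequality.
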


\begin{proof}
By definition, the rank of $\preceq$ is the supremum of the length of chains of preorders $\{\preceq_i\}_{i\in I}$ (i.e. $I$ is totally ordered and $\preceq_i\SEQ \preceq_j$ for every $i<j$) such that $\preceq_i\SEQ \preceq$ for every $i\in I$. By  Lemma \ref{inc_subgroups} and Lemma \ref{isolated}, it is also bounded by the supremum of the lengths of   chains $\{H_i\}_{i\in I}$ of subvector spaces such that
$G_\preceq\subsetneq H_i$ for every $i\in I$. This proves the claim, since $\deg(\preceq)=\dim_\Q(G_\preceq)$.
\end{proof}

\begin{cor}\label{conv3}
Let $G$ be a countable $\Q$-vector space, $\preceq\in\ZR(G)$ and let $(\preceq_n)_{n\in\N}$ be a sequence of preorders on  $G$ that converges to  $\preceq$ for the Patch topology. If $\dim_\Q(G_\preceq)<\infty$,
then, for $n$ large enough,
$$G_{\preceq}\subset G_{\preceq_n}.$$
In any case we have
$$\liminf\deg(\preceq_n)\geq \deg(\preceq).$$
\end{cor}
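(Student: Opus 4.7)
The plan is to combine Remark \ref{conv2} (the explicit description of convergence in the Patch topology) with Proposition \ref{deg_residue} and the remark following Lemma \ref{Q-ev} that the residue group $G_{\preceq_n}$ of any bi-invariant preorder on the $\Q$-vector space $G$ is automatically a $\Q$-subspace of $G$. Since $G$ is countable, I start by fixing a chain $\mathcal G=(\mathcal G_k)_{k\geq 1}$ of finite subsets exhausting $G$ as in Definition \ref{height}, so that the convergence $\preceq_n\to\preceq$ for the Patch topology translates into the statement that for every $k$ there is $N$ with $G_{\preceq_n}\cap\mathcal G_k=G_\preceq\cap \mathcal G_k$ for all $n\geq N$.

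Assume first that $\dim_\Q(G_\preceq)=d<\infty$. I fix a $\Q$-basis $u_1,\ldots,u_d$ of $G_\preceq$, and I choose $k$ large enough so that $\{u_1,\ldots,u_d\}\subset \mathcal G_k$ (such $k$ exists because $\mathcal G$ exhausts $G$). Applying Remark \ref{conv2} to this $k$, I obtain $N$ such that for every $n\geq N$ each $u_i$ lies in $G_{\preceq_n}$. Because $G_{\preceq_n}$ is a $\Q$-subspace of $G$, it must contain the $\Q$-span of $u_1,\ldots,u_d$, which is exactly $G_\preceq$. This proves the first assertion.

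The inequality on degrees then splits naturally in two cases. If $\deg(\preceq)<\infty$, the containment $G_\preceq\subset G_{\preceq_n}$ established above, together with Proposition \ref{deg_residue}, gives
$$\deg(\preceq_n)=\dim_\Q(G_{\preceq_n})\geq \dim_\Q(G_\preceq)=\deg(\preceq)$$
for all $n$ sufficiently large, and hence $\liminf\deg(\preceq_n)\geq \deg(\preceq)$. If instead $\deg(\preceq)=\infty$, I fix an arbitrary integer $d\geq 1$, pick $d$ linearly independent vectors $u_1,\ldots,u_d\in G_\preceq$, and run exactly the same argument: for $n$ large enough, $u_1,\ldots,u_d\in G_{\preceq_n}$, so $\deg(\preceq_n)=\dim_\Q(G_{\preceq_n})\geq d$. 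Letting $d\to\infty$ yields $\liminf\deg(\preceq_n)=+\infty=\deg(\preceq)$.

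There is no real obstacle here: everything reduces to the metric description of the Patch topology in Remark \ref{conv2}. The one point requiring care is that one must genuinely use the $\Q$-linear structure of $G_{\preceq_n}$ (ensured by Lemma \ref{Q-ev} and Example \ref{ex_ab_ord}), which is what allows one to upgrade the containment of a finite basis inside $G_{\preceq_n}$ to the containment of the whole subspace $G_\preceq$.
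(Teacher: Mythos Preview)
Your proof is correct and follows essentially the same route as the paper: fix a chain $(\mathcal G_k)$, use Remark \ref{conv2} to trap finitely many basis vectors of $G_\preceq$ inside $G_{\preceq_n}$, and exploit that $G_{\preceq_n}$ is a $\Q$-subspace (via Lemma \ref{Q-ev}) to upgrade this to a containment of subspaces; the infinite-degree case is handled by the same argument applied to an arbitrary finite independent family. The only cosmetic difference is that in the infinite case the paper packages the argument through the auxiliary spaces $W_k=\Vect_\Q(G_\preceq\cap\mathcal G_k)$ rather than picking $d$ independent vectors directly, but the content is identical.
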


\begin{proof}
Let $\{\mathcal G_k\}_{k\in\N}$ be a chain as in Definition \ref{height}. Let $(u_l)_{l\in L}$ be a $\Q$-basis of $G_\preceq$.\\
Assume that $L$ is finite, and let $k$ be large enough to insure that $u_l\in\mathcal G_{k}$ for every $l\in L$.
By Remark \ref{conv2}, and since $G_{\preceq_n}$ is a $\Q$-vector space for every $n$, we have
$G_\preceq\subset G_{\preceq_n}$ for $n$ large enough.\\
Now assume that $L$ is infinite. For every integer $k$, we set $W_k:=\Vect_\Q(G_\preceq\cap\mathcal G_k)$. Therefore $G_\preceq=\bigcup\limits_{k=1}^\infty W_k=\sup\limits_k W_k$. Thus
$$\deg(\preceq)=\lim_k\dim_\Q(W_k)=\sup_k\dim_\Q(W_k)$$
by Lemma \ref{deg_residue}. But, still by Remark \ref{conv2}, for any fixed $k$,  we have $W_k\subset G_{\preceq_n}$ for $n$ large enough. Thus $\liminf\deg(\preceq_n)\geq \deg(\preceq)$. 
\end{proof}

\begin{ex}\label{ex_deg}
Let $(u_n)\in(\Q^2)^\N$ be a sequence of vectors of norm 1 that converges to a vector $u$ with an irrational slope. We set $\preceq_n:=\leq_{u_n}$ and $\preceq:=\leq_u\in\ZR(\Z^2)$. Then $\deg(\preceq_n)=1$ for every $n$ and $\deg(\preceq)=0$.\\
On the other hand, if we set $\mathcal G_k=\{-k,\ldots, k\}^2$, with $k\in\N$, we have the following property:
$$\forall k\in\N,\ \exists N\in\N,\ \forall n\geq N,\ \ \preceq_{n|_{\mathcal G_k}}\equiv\preceq_{|_{\mathcal G_k}}.$$
Thus $(\preceq_n)$ converges to $\preceq$ for the Patch topology. This shows that the inequality in Corollary \ref{conv3} may be strict.
\end{ex}

Now we consider a totally ordered set $I$ and we denote by $G_I$ a $\Q$-vector space with a basis $\{e_i\}_{i\in I}$. \\
A   subset $A$ of $I$ is called a \emph{upper set} (or just an \emph{upset}) if it satisfies the following property:
$$\forall i\in A,\ \forall j\in I,\ i\leq j\Longrightarrow j\in A.$$
For a given upset $A$ we define the preorder $\leq_A$ as follows:\\
For $u=\sum\limits_{k\in I}u_ke_k$ and $v=\sum\limits_{k\in I} v_ke_k\in G_I$, we define
 $$u<_A v$$ if and only if there is $i_0\notin A$,  such that $u_j=v_j$ for all $j<i_0$, and $u_{i_0}<v_{i_0}$. Then $\leq_A$ is well defined because all but finitely many $u_k$ and $v_k$ are zero. Moreover the residue group of $\leq_A$ is the $\Q$-vector space generated by the $e_j$ for $j\in A$.\\
 
 \begin{lem}
 Let $A\subset B$ be two upsets of $I$. Then
 $$\leq_B\LEQ\leq_A.$$
 \end{lem}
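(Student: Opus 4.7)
The plan is to argue by contraposition using Remark \ref{contr}: showing $\leq_B \LEQ \leq_A$ (i.e., $\leq_A$ refines $\leq_B$) amounts to proving that for every $u, v \in G_I$,
$$u <_B v \;\Longrightarrow\; u <_A v.$$

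First I would unpack the hypothesis. Assume $u <_B v$. By the very definition of $\leq_B$ given just before the lemma, there exists an index $i_0 \notin B$ such that $u_j = v_j$ for all $j < i_0$ and $u_{i_0} < v_{i_0}$. This is a purely set-theoretic witness involving the complement of $B$ in $I$.

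Then I would use the inclusion $A \subset B$, which gives the reverse inclusion $I \setminus B \subset I \setminus A$ on complements. In particular $i_0 \notin B$ forces $i_0 \notin A$. Hence the same index $i_0$ satisfies the two required conditions ($u_j = v_j$ for $j < i_0$, and $u_{i_0} < v_{i_0}$) with respect to the complement of $A$, which by definition means $u <_A v$.

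This establishes the contrapositive form of refinement, and by Remark \ref{contr} we conclude $\leq_B \LEQ \leq_A$. There is essentially no obstacle here: the key point is simply that the witnessing index used in the definition of $<_A$ is required to lie outside $A$, and enlarging the upset from $A$ to $B$ only shrinks the set of admissible witnesses — so a witness that works for $B$ automatically works for $A$.
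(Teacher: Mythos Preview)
Your proof is correct and essentially identical to the paper's own argument: both take $u <_B v$, extract the witnessing index $i_0 \notin B$, observe that $A \subset B$ forces $i_0 \notin A$, and conclude $u <_A v$. The only cosmetic difference is that you explicitly cite Remark~\ref{contr} for the contrapositive characterization of refinement, whereas the paper leaves that implicit.
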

 
 \begin{proof}
 Let $u$, $v\in G_I$ such that $u<_{B} v$. Then there is $j_0\notin B$ such that $u_k=v_k$ for all $k<j_0$ and $u_{j_0}<v_{j_0}$. Since $A\subset B$, 
 $j_0\notin A$ and $u<_A v$. Therefore we have $u\leq_A v$. Thus,  $\leq_B\LEQ\leq_A$.
 \end{proof}

Therefore the subset $E:=\{\leq_A\mid A\text{ upset  of } I\}$ is a totally ordered subset of $\ZR(G_I)$. The set $E':=\{\leq_{\{j\in I\mid j\geq i\}}\mid i\in I\}$ is a subset of $E$, which is totally ordered, order isomorphic to $I$. \\

\begin{ex}If $I=\R$, we obtain a totally ordered subset $E'$ of $\ZR(G_I)$, such that for every $\preceq_1$, $\preceq_2\in E'$, there exists $\preceq'\in E'$ with 
$$\preceq_1\, \boldsymbol{<}\, \preceq'\, \boldsymbol{<}\, \preceq_2.$$
The upsets of $\R$ have the form
$$]x,+\infty[ \text{ or }[x,+\infty[ \text{ for some }x\in\R.$$
Therefore $E$ is order isomorphic to the union of two copies of $\R$, denoted by $\R_1$ and $\R_2$. The elements of $\R_1$ are the open right segments of $\R$, and the elements of $\R_2$ are the closed right segments of $\R$. The order on $\R_1\cup\R_2$ is the inclusion. 
\end{ex}


\begin{lem}
The chain $E=\{\leq_A\}_{A\text{ upset  of } I}$ is a maximal chain.
\end{lem}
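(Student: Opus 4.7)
The plan is to suppose that $\preceq \in \ZR(G_I)$ is such that $E \cup \{\preceq\}$ remains totally ordered and to show that then $\preceq \in E$. For a subset $A \subset I$, write $W_A := \Vect_\Q(\{e_i : i \in A\})$, so that $G_{\leq_A} = W_A$ whenever $A$ is an upset. The essential tool, extracted directly from Lemma~\ref{inc_subgroups}, is the following dichotomy: since $\preceq$ and $\leq_A$ must be comparable, either $\preceq \LEQ \leq_A$ (and then $W_A \subset G_\preceq$), or $\leq_A \LEQ \preceq$ (and then $G_\preceq \subset W_A$).

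I would then introduce the candidate upset $B := \{i \in I : e_i \in G_\preceq\}$ and first verify that $B$ is indeed an upset: given $i \in B$ and $j > i$, applying the dichotomy to $A := \{k \in I : k \geq j\}$ rules out $G_\preceq \subset W_A$ (because $e_i \in G_\preceq \setminus W_A$), forcing $W_A \subset G_\preceq$; in particular $e_j \in G_\preceq$, that is $j \in B$.

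The heart of the argument, and the main obstacle, is to show $G_\preceq = W_B$. The inclusion $W_B \subset G_\preceq$ is immediate from the definition of $B$ together with the fact that $G_\preceq$ is a $\Q$-subspace of $G_I$ (Lemma~\ref{Q-ev}: since $\preceq$ is bi-invariant, the induced order on $G_I/G_\preceq$ makes the quotient torsion-free). For the reverse inclusion I would argue by contradiction: given $u \in G_\preceq \setminus W_B$, decompose $u = u_B + u'$ with $u_B \in W_B$ and $\Supp(u') \cap B = \emptyset$, observe that $u'$ is a nonzero element of $G_\preceq$, and set $i_0 := \min \Supp(u') \notin B$. The dichotomy applied to the upset $A' := \{k \in I : k > i_0\}$ must yield $W_{A'} \subset G_\preceq$ (the alternative $G_\preceq \subset W_{A'}$ fails because $i_0 \in \Supp(u') \setminus A'$), so $u' - u_{i_0} e_{i_0} \in W_{A'} \subset G_\preceq$; subtracting from $u'$ gives $u_{i_0} e_{i_0} \in G_\preceq$, hence $e_{i_0} \in G_\preceq$ (using $\Q$-linearity) and $i_0 \in B$, a contradiction.

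Finally, $\preceq$ and $\leq_B$ are comparable by assumption and have the same residue group $W_B$, so the equality clause of Lemma~\ref{inc_subgroups} gives $\preceq = \leq_B \in E$. I expect the difficulty to lie entirely in the third paragraph: the dichotomy provides only a one-sided inclusion for each upset, and pinning down $G_\preceq$ as exactly $W_B$ requires the precise choice of upset $A' = \{k > i_0\}$ combined with the minimality of $i_0$ in $\Supp(u')$ and the $\Q$-linearity of $G_\preceq$.
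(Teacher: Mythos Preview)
Your proof is correct and follows the same architecture as the paper: define $B=\{i\in I: e_i\in G_\preceq\}$, prove $B$ is an upset via the dichotomy, prove $G_\preceq=W_B$, and conclude $\preceq=\leq_B$ from the equality clause of Lemma~\ref{inc_subgroups}. The only noticeable difference is tactical, in the reverse inclusion $G_\preceq\subset W_B$: the paper applies the dichotomy to $A=\{k\geq i_0\}$, obtains $G_\preceq\subsetneq H_A$, and then finds an intermediate upset $D$ with $B\subsetneq D\subsetneq A$ (using that $u'$ has at least two nonzero coordinates) for which both inclusions fail; you instead apply the dichotomy to $A'=\{k>i_0\}$, force $W_{A'}\subset G_\preceq$, and peel off $e_{i_0}$ directly. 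Your route is slightly cleaner since it avoids the auxiliary upset $D$ and the case $|\Supp(u')|\geq 2$, and you make explicit the final appeal to the equality case of Lemma~\ref{inc_subgroups}, which the paper leaves implicit.
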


\begin{proof} Let $\preceq\in\ZR(G_I)$ such that,  for every upset  $A$, we have
$\preceq\LEQ\leq_A$ or  $\leq_A\LEQ\preceq.$
For a upset  $A$ we set
$H_A:=\Vect\{e_i\mid i\in A\}$.
Therefore, by Lemma \ref{inc_subgroups}, we have
$H_A\subset G_{I\preceq}$ or $G_{I\preceq}\subset H_A.$
By Lemma \ref{Q-ev}, $G_{I\preceq}$ is a $\Q$-subspace of $G_I$. We set
$$B:=\{i\in I\mid e_i\in G_{I\preceq}\}.$$
We claim that $B$ is an upset  of $I$. Indeed, if it is not the case, there exist $i$, $j\in I$ with $i<j$ and $i\in B$, $j\notin B$. Set
$$C:=\{k\in I\mid k\geq j\}.$$
Then $e_j\in H_C$ and $e_i\notin H_C$. By hypothesis we have $G_{I\preceq}\subset H_C$ or $H_C\subset G_{I\preceq}$. But this contradicts the fact that $e_i\in G_{I\preceq}$ and $e_j\notin G_{I\preceq}$. Hence $B$ is an upset.\\
\\
Now we claim that $H_B=G_{I\preceq}$. Indeed, assume that this is not the case. Then there exist $u\in G_{I\preceq}\setminus H_B$ such that $u$ is not a multiple of some $e_i$. We can choose $u$ of the form
$\displaystyle u=\sum_{k=1}^nu_{i_k}e_{i_k}$
where none of the $i_k$ is in $B$. Assume that $i_1<i_2<\cdots <i_n$ and $u_{i_1}\neq 0$. Set
$$A:=\{i\in I\mid i\geq i_1\}.$$
Then $A$ is the smallest upset  containing $B$ and $u$.  We have
$H_B\subset H_A$ or $H_A\subset H_B$
 by Lemma \ref{inc_subgroups}. But we have 
 $G_{I\preceq}\subset H_A$ or $H_A\subset G_{I\preceq}.$
 By definition of $B$ we do not have $H_A\subset G_{I\preceq}.$ Therefore we have $H_B\subsetneq G_{I\preceq}\subsetneq H_A$. Since $u$ is not a multiple of some $e_i$, there is an upset  $D$ with $B\subsetneq D\subsetneq A$. Therefore we do not have
 $G_{I\preceq}\subset H_D$,  neither  $H_D\subset G_{I\preceq}.$
 This contradicts the hypothesis. Therefore $G_{I\preceq}=H_B$ and $\preceq=\leq_B$.
\end{proof}

\begin{rem}
In particular if $I$ is not finite, $I$ contains a countable subset $I'$. But, for a well chosen order $I'$ can be identified with $\Q$. Therefore, the chain $\{\leq_A\}_{A \text{ upset  of } I'}$ is a totally ordered set of preorders on $\ZR(G_I)$ that is not well ordered. Therefore $\ZR(G_I)$ is not a graph.
\end{rem}


\subsection{Description of $\ZR(\Q^n)$} \label{Q^n}
We have the following result:

\begin{thm}\cite[Theorem 4]{R}\label{rob}
Let $\preceq\in\ZR(\Q^n)$. Then there exist an integer $s\geq 0$ and vectors $u_1$,\ldots, $u_s\in\R^n$ such that
$$\forall u,v\in \Q^n,\ u\preceq v \Longleftrightarrow (u\cdot u_1,\ldots, u\cdot u_s)\leq_{\text{lex}} (v\cdot u_1,\ldots, v\cdot u_s).$$
Then we write $\preceq=\leq_{\left(u_1,\dots,u_s\right)}$.
\end{thm}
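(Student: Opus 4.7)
The plan is to induct on $n$, first reducing to the case of an order and then invoking Hölder's theorem to realize each archimedean quotient inside $(\R,+,\le)$. By Lemma~\ref{Q-ev} and Example~\ref{ex_ab_ord}, the residue group $G_\preceq$ is a $\Q$-subspace of $\Q^n$, and $\preceq$ descends to an order $\preceq'$ on $\Q^n/G_\preceq$. When $G_\preceq\neq 0$, I would fix a $\Q$-linear complement $W$ of $G_\preceq$ in $\Q^n$, apply the theorem inductively to $\preceq'$ viewed on $W\cong\Q^{n-\dim_\Q G_\preceq}$ to obtain $u'_1,\dots,u'_s\in\R^{\dim W}$, and extend each $u'_i$ to $u_i\in\R^n$ by declaring it to vanish on $G_\preceq$. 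Since $v\cdot u_i=\ovl v\cdot u'_i$ depends only on the class of $v$ modulo $G_\preceq$, the identity $\preceq\,=\,\leq_{(u_1,\dots,u_s)}$ then reduces to the corresponding identity for $\preceq'$, so the problem is reduced to the case of a non-trivial order on $\Q^n$.

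For such an order, I would use the chain of $\preceq$-isolated subgroups. By Lemma~\ref{Q-ev}, Proposition~\ref{lex_order2} and Theorem~\ref{cor_raf_toset}, these form a totally ordered chain of $\Q$-subspaces
$$0=H_0\subsetneq H_1\subsetneq\cdots\subsetneq H_s=\Q^n,\qquad s=\rk(\preceq)\le n,$$
and each successive quotient $H_i/H_{i-1}$ inherits an archimedean order (otherwise one could interpolate a further isolated subgroup). Hölder's theorem then provides an order-preserving $\Q$-linear embedding $\phi_i:H_i/H_{i-1}\hookrightarrow\R$ for each $i$. Extend $\phi_i$ by any $\Q$-linear lift to $\tilde\phi_i:\Q^n/H_{i-1}\to\R$, and set
$$\psi_{s-i+1}\ :=\ \tilde\phi_i\circ\pi_{i-1}\ :\ \Q^n\lgw\R,$$
where $\pi_{i-1}:\Q^n\to\Q^n/H_{i-1}$ is the canonical projection. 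Every $\Q$-linear map $\Q^n\to\R$ is of the form $v\mapsto v\cdot u$ for a unique $u\in\R^n$, so this produces the desired $u_1,\dots,u_s\in\R^n$.

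To check $\preceq\,=\,\leq_{(u_1,\dots,u_s)}$, take distinct $v,w\in\Q^n$ and let $j$ be the smallest integer with $w-v\in H_j$; then $\ovl{w-v}$ is a nonzero element of the archimedean quotient $H_j/H_{j-1}$, and its sign there coincides with the sign of $w-v$ in $\Q^n$, the $\preceq$-isolation of $H_{j-1}$ being what makes this well-defined. A direct index chase gives $\psi_k(w-v)=0$ for $k\le s-j$ and $\psi_{s-j+1}(w-v)=\phi_j(\ovl{w-v})\neq 0$ with the correct sign, so the first disagreement in the lex comparison of $(v\cdot u_k)_k$ and $(w\cdot u_k)_k$ occurs precisely at position $k=s-j+1$, with the sign dictated by $\preceq$. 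This yields the claimed equality.

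The essential non-trivial input is Hölder's theorem embedding an archimedean totally ordered abelian group into $\R$. Beyond this, care is needed with the index inversion between the chain $H_\bullet$ and the coordinates $u_\bullet$, and with the observation that the arbitrary $\Q$-linear extension of each $\phi_i$ is harmless: its values on elements outside $H_i/H_{i-1}$ only affect $\psi_{s-i+1}$ at positions already resolved by earlier (more significant) coordinates in the lex order.
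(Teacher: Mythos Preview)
The paper does not prove this theorem; it is quoted as \cite[Theorem 4]{R} and used as a black box. Your argument is a correct and essentially standard proof: reduce to an order by passing to $\Q^n/G_\preceq$, use the finite chain of convex $\Q$-subspaces, apply H\"older's theorem on each archimedean quotient, and represent the resulting $\Q$-linear functionals by vectors in $\R^n$. The index bookkeeping and the remark that the arbitrary $\Q$-linear extensions of the $\phi_i$ are harmless for the lexicographic comparison are both handled correctly. One small point you leave implicit is why each $\preceq$-isolated subgroup is a $\Q$-subspace (hence why the chain is finite): this follows because for an order the quotient by a convex subgroup is torsion-free, so Lemma~\ref{Q-ev} applies; you cite the lemma but not this torsion-freeness step.
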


\begin{prop}
For a given non trivial preorder $\preceq\in\Q^n$, let $s$ be the smallest integer $s$ satisfying Theorem \ref{rob}. Then the rank of $\preceq$ is $s$. 
\end{prop}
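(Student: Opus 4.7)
My plan is to use the bijection (from the remark following Proposition \ref{lex_order2}) between $\Ra(\preceq)$ and the set of $\preceq$-isolated subgroups of $\Q^n$, together with Lemma \ref{Q-ev} and Example \ref{ex_ab_ord}, which ensure that such isolated subgroups are in fact $\Q$-subspaces containing $G_\preceq$. Thus the task reduces to showing that $\preceq$ admits exactly $s+1$ isolated $\Q$-subspaces.

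Writing $\preceq = \leq_{(u_1, \ldots, u_s)}$ with $s$ minimal, I would introduce the natural candidates
$$W_i := \{v \in \Q^n \mid v \cdot u_1 = \cdots = v \cdot u_i = 0\}, \quad 0 \leq i \leq s,$$
with $W_0 = \Q^n$ and $W_s = G_\preceq$. First I would check that each $W_i$ is the residue group of $\leq_{(u_1, \ldots, u_i)}$, that this preorder lies in $\Ra(\preceq)$ (the lex order on the longer tuple refines the lex order on the shorter one), and hence that $W_i$ is $\preceq$-isolated by Lemma \ref{isolated}. Then I would argue the $W_i$ are pairwise distinct using the minimality of $s$: if $W_i = W_{i+1}$ for some $i < s$, then $u_{i+1}$ would vanish on $W_i$, so when the lexicographic comparison reaches position $i+1$ (which happens exactly when the vectors to compare differ by an element of $W_i$) that coordinate would contribute nothing, and $u_{i+1}$ could be dropped, contradicting minimality.

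The main step is the converse: every $\preceq$-isolated $\Q$-subspace $H$ of $\Q^n$ equals some $W_i$. I would let $i$ be the largest index in $\{0, \ldots, s\}$ with $H \subset W_i$. If $i = s$, then $H \subset W_s = G_\preceq$, and since isolation forces $G_\preceq \subset H$, we get $H = W_s$. If $i < s$, maximality of $i$ yields $v \in H$ with $v \cdot u_{i+1} \neq 0$, and we may assume $v \cdot u_{i+1} > 0$. For any $w \in W_i$ I would choose an integer $N$ large enough that
$$-N(v \cdot u_{i+1}) < w \cdot u_{i+1} < N(v \cdot u_{i+1}),$$
and observe that since $\pm Nv$ and $w$ all lie in $W_i$, their first $i$ lexicographic coordinates vanish, so the comparison is decided at position $i+1$: $-Nv \prec w \prec Nv$. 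As $\pm Nv \in H$ and $H$ is $\preceq$-isolated, $w \in H$, giving $W_i \subset H$, and hence $H = W_i$.

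Combining both directions, $\Ra(\preceq)$ has exactly $s+1$ elements, so $\rk(\preceq) = s$. The main obstacle is the converse direction, identifying all $\preceq$-isolated $\Q$-subspaces; the sandwich argument, exploiting that scaling $v$ by a large integer dominates any fixed $w \in W_i$ at position $i+1$, is the central technical ingredient.
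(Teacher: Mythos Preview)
Your proof is correct and follows essentially the same approach as the paper. The paper's own proof of this proposition is a one-line assertion that $\Ra(\preceq)=\{\leq_\emptyset,\leq_{u_1},\ldots,\leq_{(u_1,\ldots,u_s)}\}$; the detailed justification---that the $\Ker(\Psi_i)$ (your $W_i$) are pairwise distinct by minimality of $s$ and are exactly the $\preceq$-isolated subspaces, via the same sandwich argument you give---is deferred to the immediately following Proposition~\ref{deg_rank}, whose proof matches yours almost verbatim.
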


\begin{proof}
We have $\#\Ra(\preceq)=s+1$. Indeed $\Ra(\preceq)=\{\preceq_{\emptyset}, \leq_{u_1},\leq_{\left(u_1,u_2\right)},\dots,\leq_{\left(u_1,\dots,u_s\right)}\}$.
\end{proof}

%

\begin{prop} \label{deg_rank}
For a given non trivial preorder $\preceq\in\Q^n$ let $u_1$,\ldots, $u_s$ be vectors such that $\preceq=\leq_{(u_1,\ldots, u_s)}$.
We assume $s$ to be minimal for this property. For $k=0$, \ldots, $s$, let $\Psi_k$ be the  $\Q$-linear map
$$\begin{array}{cccc}\Psi_k : &  \Q^n& \lgw & \R^k\\
 & q& \lgm &(q\cdot u_1,\ldots, q\cdot u_k)\end{array}$$
 where $\Psi_0$ is the zero map.
Then $\Ker(\Psi_s)=G_\preceq$ and the following subgroups are the only $\preceq$-isolated subgroups of $\Q^n$:
$$G_\preceq=\Ker(\Psi_s)\subsetneq\Ker(\Psi_{s-1})\subsetneq\cdots\subsetneq \Ker(\Psi_1)\subsetneq\Ker(\Psi_0)= \Q^n.$$

\end{prop}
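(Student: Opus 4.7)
The plan is to identify each $\Ker(\Psi_k)$ as the residue group of a natural preorder coarsening $\preceq$, and then invoke the bijection between $\Ra(\preceq)$ and the set of $\preceq$-isolated subgroups to conclude exhaustiveness.

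First, for $k=0,1,\ldots,s$, set $\preceq_k := \leq_{(u_1,\ldots,u_k)}$ (with $\preceq_0=\leq_\emptyset$), so that $\preceq_k\LEQ\preceq$ for every $k$ and $\preceq_s=\preceq$. Unpacking the lexicographic definition, $v\sim_{\preceq_k}0$ if and only if $v\cdot u_1=\cdots=v\cdot u_k=0$; hence $G_{\preceq_k}=\Ker(\Psi_k)$, and in particular $G_\preceq=G_{\preceq_s}=\Ker(\Psi_s)$.

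Second, the preceding proposition gives
$$\Ra(\preceq)=\{\preceq_0,\preceq_1,\ldots,\preceq_s\},$$
a chain of $s+1$ pairwise distinct elements (distinctness follows from the minimality of $s$: if two consecutive $\preceq_k$ coincided, Lemma \ref{inc_subgroups} would force the corresponding representation to collapse to fewer than $s$ vectors). Applying Lemma \ref{inc_subgroups} along this chain, the map $\preceq'\mapsto G_{\preceq'}$ is strictly order-reversing, which produces the strict chain of subgroups
$$\Ker(\Psi_s)\subsetneq \Ker(\Psi_{s-1})\subsetneq \cdots\subsetneq \Ker(\Psi_0)=\Q^n,$$
each term being $\preceq$-isolated by Lemma \ref{isolated}.

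Finally, the Remark following the definition of the degree records that $\preceq'\mapsto G_{\preceq'}$ is a bijection between $\Ra(\preceq)$ and the set of $\preceq$-isolated subgroups. Since we have already exhibited $s+1=\#\Ra(\preceq)$ such subgroups, this list is complete, and the result follows. The main (and essentially only) obstacle is confirming that no extra $\preceq$-isolated subgroup can slip in; this is settled cleanly by the above bijection together with the rank computation from the previous proposition, so beyond that the proof is a matter of translating the lexicographic definition into the language of residue groups.
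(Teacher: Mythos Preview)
Your proof is correct and takes a genuinely different route from the paper's. The paper argues directly: after noting $\Ker(\Psi_s)=G_\preceq$ and deducing the strictness of the chain from minimality of $s$, it shows by hand that any $\preceq$-isolated subgroup $V$ with $V_{k+1}\subsetneq V\subseteq V_k$ must equal $V_k$. The key step is an Archimedean squeeze: picking $v\in V\cap(V_k\setminus V_{k+1})$, one has $v\cdot u_{k+1}\neq 0$, so for any $u\in V_k$ there is $m\in\Z$ with $-mv\preceq u\preceq mv$, forcing $u\in V$ by isolation. You instead leverage the structural machinery already in place: the preceding proposition gives $\Ra(\preceq)=\{\preceq_0,\ldots,\preceq_s\}$ with exactly $s+1$ elements, Lemma~\ref{isolated} shows each $G_{\preceq_k}=\Ker(\Psi_k)$ is $\preceq$-isolated, Lemma~\ref{inc_subgroups} makes the chain strict, and the general bijection $\Ra(\preceq)\leftrightarrow\{\preceq\text{-isolated subgroups}\}$ recorded in the Remark finishes the count. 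Your approach is cleaner once that machinery is available and highlights that this proposition is really a specialization of the general correspondence; the paper's hands-on argument has the virtue of being self-contained and making the Archimedean nature of the rank-one quotients explicit. One small quibble: in your parenthetical on distinctness, the invocation of Lemma~\ref{inc_subgroups} is not quite the right hook (that lemma goes from equal residue groups to equal preorders, not directly to a collapse of the representation); but this is moot since the previous proposition already asserts $\#\Ra(\preceq)=s+1$, which is all you need.
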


\begin{defi}
For such a preorder, we set 
$d_k:=\dim_\Q(\Ker(\Psi_{k-1})/\Ker(\Psi_k)).$
In particular we have 
\begin{equation}\label{ab_rr}\sum\limits_{k=1}^s{d_k}+\deg(\preceq)=n.\end{equation}
 Here, the integer $\sum\limits_{k=1}^s{d_k}$ is the analogue of the rational rank of a valuation, and \eqref{ab_rr} is the analogue of the second inequality in Remark \ref{Abhyankar}.
The sequence $(d_1,\ldots, d_s)$ is called the \emph{type} of the preorder.\\
The set of preorders of type $(d_1,\ldots, d_s)$ is denoted by $\ZR^{(d_1,\ldots, d_s)}(\Q^n)$.
\end{defi}

\begin{proof}[Proof of Proposition \ref{deg_rank}]
It is straightforward to see that $\Ker(\Psi_s)=G_\preceq$. \\
Moreover if $V_{k+1}=V_k$, then we have
$$\leq_{(u_1,\ldots, u_n)}=\leq_{(u_1,\ldots, u_{k-1},u_{k+1},\ldots, u_n)}.$$
Therefore, since $s$ is assumed to be minimal, we have $V_{k+1}\subsetneq V_k$ for every $k$.\\
We denote by $V_k$ the space $\Ker(\Psi_k)$. Let $k\geq 1$, and let $V$ be a subspace of $V_k$ such that $V_{k+1}\subsetneq V$. Assume that $V$ is $\preceq$-isolated. There is $v\in V$ such that $v\in V_k\setminus V_{k+1}$. Let $u\in V_k$. Since $v\notin V_{k+1}$, $v\cdot u_{k+1}\neq 0$. Thus there is  $m\in\Z$ such that
$$-m v\cdot u_{k+1}\leq u\cdot u_{k+1}\leq m v\cdot u_{k+1}.$$
Thus, $-mv\preceq u\preceq mv$. Therefore $u\in V$, since  $V$ is $\preceq$-isolated. Thus $V=V_k$. This proves the result.
\end{proof}

\begin{rem}\label{deg_int}
Assume given $u_1$, \ldots, $u_s$ as in Proposition \ref{rob}. Then we can replace $u_k$, for $k\geq 1$, by the orthogonal projection of $u_k$ onto $\R\otimes \Ker(\Psi_1)$. By induction, we may assume that $u_k\in\R\otimes\Ker(\Psi_{k-1})$ for every $k$.\\
 In this case, if we define $e_k=\dim_\Q(\Vect_\Q(u_{k,1},\ldots, u_{k,n}))$,  we have $\dim_\Q \Ker(\Psi_1)=n-e_1$ and $\R\otimes \Ker(\Psi_1)\simeq \R^{n-e_1}$. In particular $e_1=d_1$. Moreover
 $\Ker(\Psi_2)=\Ker(\Psi')$
 where $\Psi'$ is  the  $\Q$-linear map
$$\begin{array}{cccc}\Psi' : &  \Ker(\Psi_1) & \lgw & \R^{s-1}\\
 & q& \lgm &(q\cdot u_2,\ldots, q\cdot u_s)\end{array}$$
 Hence, by induction, we  have $e_k= d_k$ for every $k$.
\end{rem}

Then we have the following description of the topological set $\ZR(\Q^n)$:

\begin{thm}\label{main1} We fix $n\geq 2$ and consider $\ZR(\Q^n)$ endowed with the Patch topology. We have the following properties:
\begin{itemize}
\item[i)]  Every $\preceq\in\ZR(\Q^n)$ is an isolated point if and only if $\deg(\preceq)\geq n-1$. If $\deg(\preceq)\leq n-2$, every open neighborhood of $\preceq$ contains infinitely many preorders of same rank and same degree as $\preceq$.

 \item[ii)]  For $n\geq d\geq 0$, $^{\leq d}\!\ZR(\Q^n)$ is a metric compact totally disconnected space. Therefore, for $d\leq n-2$, it is a Cantor set.
 
 \item[iii)]   For $\preceq\in \ZR(\Q^n)$, the set $\Raf(\preceq)$ is homeomorphic to $\ZR(\Q^{\deg(\preceq)})$.

  \item[iv)]  Let  $\preceq\in \ZR(\Q^n)$ and $d\leq \deg(\preceq)-1$.
  Then  $\Raf(\preceq)\cap {}^{\leq d}\!\ZR(\Q^n)$ is a Cantor set.
 
 \item[v)] The only elements of $\Aut(\Q^n)$ whose action on $\ZR(\Q^n)$ is the identity, is the set of $\Q$-linear maps $x\lgm \la x$ with $\la\in\Q_{>0}$.
 
 \item[vi)] Let $\preceq\in\ZR(\Q^n)$. Then the stabilizer of $\preceq$ under the action of $\Aut(\Q^n)$ is 
 $$\Aut(\Q^n)_\preceq=\{\phi\in\Aut(\Q^n)\mid \phi(G_{\preceq})= G_{\preceq},  \text{ and } \phi_{|_{G/G_\preceq}}=\la 1\!\!1_{|_{G/G_\preceq}} \text{ with }\la>0\}.$$

\item[vii)]  For every $s$, $d$ and $(d_1,\ldots,d_s)\in\Z_{>0}^s$ with $\sum d_k+d=n$, $\Aut(\Q^n)$ acts  transitively on $\ZR^{(d_1,\ldots, d_s)}(\Q^n)$.

 \end{itemize}
 \end{thm}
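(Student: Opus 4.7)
The plan is to use Theorem \ref{rob} throughout to represent any preorder as $\preceq=\leq_{(u_1,\ldots,u_s)}$ for vectors $u_k\in\R^n$, together with Proposition \ref{deg_rank}, which identifies the $\preceq$-isolated subgroups as the chain $\Ker(\Psi_0)\supsetneq\cdots\supsetneq\Ker(\Psi_s)=G_\preceq$. Most of the seven items then reduce to structural statements about this data, with topology controlled through the Patch metric of Proposition \ref{prop_metric} and the lower semicontinuity of $\deg$ from Corollary \ref{conv3}.

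For (i), to isolate $\leq_\emptyset$ I would pick a $\Q$-basis $v_1,\ldots,v_n$ of $\Q^n$ and observe $\bigcap_{i=1}^n(\O_{v_i}\cap\O_{-v_i})=\{\leq_\emptyset\}$, since every such preorder has $v_i\sim 0$ for all $i$. To isolate $\preceq=\leq_{u_1}$ of degree $n-1$, pick a $\Q$-basis $v_1,\ldots,v_{n-1}$ of the hyperplane $G_\preceq$ and $w\in\Q^n$ with $w\cdot u_1>0$; the basic P-open $\bigcap_i(\O_{v_i}\cap\O_{-v_i})\cap\U_w$ forces $G_{\preceq'}\supseteq G_\preceq$ and $w\succ' 0$, pinning down $\preceq'=\preceq$. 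Conversely, when $\deg(\preceq)\leq n-2$, I would produce infinitely many same-rank, same-degree preorders in every P-ball by perturbing the defining vectors: the bound $\sum d_k=n-\deg(\preceq)\geq 2$ ensures that within the real affine variety of choices of $(u_1',\ldots,u_s')$ of the same type whose associated preorder agrees with $\preceq$ on a given $\mathcal G_k$, there is a positive-dimensional locus of valid perturbations (arising either from a quotient with $d_k\geq 2$, or from varying the chain itself when all $d_k=1$).

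Items (ii)--(iv) follow quickly. P-compactness is Theorem \ref{compacity_preorders}, metrizability is Proposition \ref{prop_metric}, P-closedness of $^{\leq d}\!\ZR(\Q^n)$ is Corollary \ref{conv3}, total disconnectedness is Lemma \ref{tot_disc}, and the Cantor property for $d\leq n-2$ follows from (i). For (iii), Proposition \ref{exact_seq}(i) supplies an increasing Z- and I-homeomorphism $\ZR(G_\preceq)\simeq\Raf(\preceq)$, automatically P-homeomorphism since P is generated by Z and I, and $G_\preceq\cong\Q^{\deg(\preceq)}$. Item (iv) combines these: under (iii) the degree of a refinement of $\preceq$ viewed in $\Q^n$ matches its degree viewed in $\Q^{\deg(\preceq)}$, so $\Raf(\preceq)\cap{}^{\leq d}\!\ZR(\Q^n)\cong{}^{\leq d}\!\ZR(\Q^{\deg(\preceq)})$ is Cantor by (ii) once the dimension bound is met.

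The last items concern the $\Aut(\Q^n)$-action. A key simplification is the formula $\preceq_\phi=\leq_{(\phi^T u_1,\ldots,\phi^T u_s)}$, converting the action into the ordinary transpose action on defining real vectors. For (v), applying triviality to every rational rank-$1$ preorder $\leq_v$ for $v\in\Q^n\setminus\{0\}$ gives $\phi^T v\in\R_{>0}v$, so every rational vector is an eigenvector of $\phi^T$, forcing $\phi$ to be a positive homothety. For (vi), Lemma \ref{lemma_stab}(ii) reduces to (v) applied inside $\Q^n/G_\preceq\cong\Q^{n-\deg(\preceq)}$. For (vii), the strategy is to take $\preceq_1,\preceq_2$ of the same type, first construct $\phi_0\in\Aut(\Q^n)$ sending the chain of isolated subgroups of $\preceq_2$ to that of $\preceq_1$ by choosing adapted $\Q$-bases, then adjust within the parabolic subgroup stabilising that chain so that the induced functional $\bar u_k^{(2)}$ becomes a positive multiple of $\bar u_k^{(1)}$ on each quotient $V_{k-1}/V_k\cong\Q^{d_k}$. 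The main obstacle is this last step: since $\bar u_k^{(1)}$ and $\bar u_k^{(2)}$ are $\Q$-linear injections $\Q^{d_k}\hookrightarrow\R$ with generally different images as $\Q$-subspaces of $\R$, bringing them into positive proportion under a $GL_{d_k}(\Q)$-adjustment is the genuine technical heart of the argument.
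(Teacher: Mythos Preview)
Your outline for (i)--(v) follows the paper's closely: the same basis tricks isolate preorders of degree $\geq n-1$, the same cone-perturbation idea handles the converse, and (ii)--(iv) invoke the same chain of references (Proposition~\ref{exact_seq}, Corollary~\ref{conv3}, Lemma~\ref{tot_disc}). Your eigenvector argument for (v) is a clean variant of the paper's contrapositive.

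There is, however, a genuine gap in your (vi), and the paper's one-line proof shares it. Lemma~\ref{lemma_stab}(ii) reduces the problem to computing the stabiliser in $\Aut(\Q^m)$ of the \emph{single} induced order on $\Q^m:=\Q^n/G_\preceq$, whereas (v) only identifies the kernel of the action on all of $\ZR(\Q^m)$; the latter is strictly smaller in general. For the lexicographic order $\preceq=\leq_{((1,0),(0,1))}$ on $\Q^2$ (so $G_\preceq=0$), the shear $\phi=\begin{psmallmatrix}1&0\\1&1\end{psmallmatrix}$ satisfies $\preceq_\phi=\preceq$ yet is not a positive homothety, so statement (vi) is in fact false as written, and no argument along these lines can repair it.

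You correctly isolate the crux of (vii), and you are right to be cautious: that step cannot be completed either. Bringing two injective $\Q$-linear maps $\Q^{d_k}\hookrightarrow\R$ into positive proportion via $GL_{d_k}(\Q)$ forces their images in $\R$ to differ by a positive real scalar, which fails generically. Concretely, $\leq_{(1,\sqrt2)}$ and $\leq_{(1,\sqrt3)}$ both lie in $\ZR^{(2)}(\Q^2)$, but any relation $\phi^T(1,\sqrt2)\in\R_{>0}(1,\sqrt3)$ with $\phi\in GL_2(\Q)$ unwinds to a $\Q$-linear dependence among $1,\sqrt2,\sqrt3,\sqrt6$. The paper asserts the existence of the required rational matrix $A$ without justification, and none exists.
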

 
 \begin{proof}
Let us prove i). Take a basis of $\Q^n$, $u_1$, \ldots, $u_n$. Then $\{\leq_\emptyset\}=\bigcap\limits_{i=1}^n(\O_{u_i}\cap\O_{-u_i})$. Therefore $\leq_\emptyset$ is an isolated point. Now  let $u\in\Q^n$, $u\neq 0$. Let $v_2$, \ldots, $v_n\in\Q^n$ be such that $(u,v_2,\ldots, v_n)$ is an orthogonal basis of $\Q^n$. Then we have
 $$\{\leq_u\}=\U_u\bigcap \left(\bigcap\limits_{i=2}^n \O_{v_i}\cap\O_{-v_i}\right).$$
 Therefore $\leq_u$ is an isolated point of $\ZR^1(\Q^n)$.\\
On the other hand, assume that $\preceq$ is not the trivial preorder nor a preorder of the form $\leq_u$ for some $u$ multiple of a vector in $\Q^n$. We set $s=\rk(\preceq)$ and $d=\deg(\preceq)$. If $s=1$, $\preceq=\leq_u$ for some $u\in\R^n$ that is not a multiple of a vector of $\Q^n$. Therefore, by Proposition \ref{deg_rank}, $d\leq n-2$. If $s\geq 2$, we have that $\deg(\preceq)=\dim_\Q(G_\preceq)\leq s-2\leq n-2$ by Corollary \ref{rank-dim}. Thus we always have $d\leq n-2$.\\
Assume that $\preceq$ is an isolated point and write $\preceq=\leq_{(u_1,\ldots, u_s)}$. Therefore we may assume that there are vectors $v_i$, $w_j\in\Q^n$ such that 
 $$\left(\bigcap\limits_{i=1}^r \O_{v_i}\right)\cap\left(\bigcap\limits_{j=1}^s\U_{w_j}\right)=\{\preceq\}.$$
 We may assume that $v_i\sim_\preceq 0$ for every $i$ and write
 $$E:=\left(\bigcap\limits_{i=1}^r \O_{v_i}\cap\O_{-v_i}\right)\cap\left(\bigcap\limits_{j=1}^s\U_{w_j}\right)=\{\preceq\}.$$ 
 We may also assume that none of the $v_i$ and $w_j$ are collinear.\\
Moreover $v_i\in G_\preceq$ for every $i$. Therefore we will show how to construct infinitely many preorders of rank $s$ and degree $d$ belonging to $E$, and contradicting the assumption on $\preceq$. For this we consider the set
 $$C:=\{x\in\R^n\mid x\cdot w_j> 0 \text{ for every } j\leq s\}.$$
This is a non-empty open set (since $\preceq\in \bigcap\limits_{j=1}^s\U_{w_j}$).  Therefore we may choose $u_1'$, \ldots, $u_{s}' \in C\cap G_\preceq^\perp$, $\Q$-linearly independent (because $\dim_\Q(G_\preceq^\perp)=n-d\geq s$ by Corollary \ref{rank-dim}). Moreover we may choose the $u_i'$ in such a way that the kernel of the  linear map $\Psi$ defined in Proposition \ref{deg_rank} is $d$. Indeed, by Remark \ref{deg_int}, in order to do this, we choose $u_1'$ such that 
$$d_1:=\dim_\Q(\Vect_\Q(u_{1,1}',\ldots, u_{1,n}'))=\dim_\Q(\Vect_\Q(u_{1,1},\ldots, u_{1,n})),$$
and by induction, we choose 
$$u_i'\in\R\otimes (\Vect(u_1,\ldots, u_{i-1})^\perp\cap\Q^n)$$
with 
$$d_i:=\dim_\Q(\Vect_\Q(u_{i,1}',\ldots, u_{i,n}'))=\dim_\Q(\Vect_\Q(u_{i,1},\ldots, u_{i,n})).$$
By Remark \ref{deg_int}, $\sum_i d_i= \deg(\preceq)$, thus we may choose such $u_i'$. And again by Remark \ref{deg_int},  the preorder 
$\leq_{(u_1',\ldots, u_s')}$
has degree $d$. Moreover it has rank $s$ since we have
$$\leq_{u_1'}\SEQ \leq_{(u_1',u_2')} \SEQ \cdots \SEQ \leq_{(u_1',\ldots, u_s')}.$$
 Moreover the residue group of $\leq_{(u_1',\ldots, u_s')}$ contains $G_\preceq$, because the $u_i'$ belong to $G_\preceq^\perp$. Since the $u_i'$ are in $C$ we have
$$\leq_{(u_1',\ldots, u_s')}\in E.$$
Because there are infinitely many ways of choosing the vector $u_1'$ of norm 1 (and therefore of choosing the unique preorder of rank 1 refined by $\leq_{(u_1',\ldots, u_s')}$), $E$ contains infinitely many preorders of rank $s$ and degree $d$. This proves i).\\
\\
The set $^{\leq d}\!\ZR(\Q^n)$ is closed by Corollary \ref{conv3}, therefore it is compact. This set is a metric space. Moreover it is totally disconnected, by Lemma \ref{tot_disc}. Therefore, by i), it is a Cantor set for $d\leq n-2$. \\
\\
Clearly iii) holds by  Proposition \ref{deg_rank} and Proposition \ref{exact_seq}.\\
 \\
We have that $\Raf(\preceq)\cap {}^{\leq d\!}\ZR(\Q^n)$ is homeomorphic to $^{\leq d}\!\ZR(\Q^{\deg(\preceq)})$ by Proposition \ref{exact_seq}. Therefore iv) follows from ii) and iii).\\
\\
Let $\phi$ be defined by $\phi(x)=\la x$, for every $x\in\Q^n$, where $\la>0$. Then, for $\preceq\in \ZR(\Q^n)$, and for every $u$, $v\in\Q^n$, we have $u\preceq v$ if and only if $\la u\preceq \la v$. That is, $\preceq=\preceq_\phi$. On the other hand, assume that $\phi$ is not of this form. Then there is $x\in\Q^n$ such that $x\neq \la\phi(x)$ for all $\la>0$. Thus, there is $u\in\Q^n$, such that $x\cdot u>0$ and $\phi(x)\cdot u<0$. Set $\preceq:=\leq_u$. Then we have $0\prec x$ but $0\succ \phi(x)$.
Therefore $\preceq\neq\preceq_\phi$. This proves v).\\
\\
Therefore, by Lemma \ref{lemma_stab} ii), we have vi).\\
\\
If $\preceq\in \ZR^{(d_1,\ldots, d_s)}(\Q^n)$ and $\phi\in\Aut(\Q^n)$, we have $\preceq_\phi\in \ZR^{(d_1,\ldots, d_s)}(\Q^n)$ by Proposition \ref{deg_rank}.\\
Let $\preceq$, $\preceq'\in \ZR^{(d_1,\ldots, d_s)}(\Q^n)$. We denote by 
$$V_s:=G_\preceq\subsetneq V_{s-1}\subsetneq\cdots \subsetneq V_1\subsetneq V_0:=\Q^n$$
$$(\text{resp. } V_s':=G_{\preceq'}\subsetneq V_{s-1}'\subsetneq\cdots \subsetneq V_1'\subsetneq V_0':= \Q^n)$$
the $\preceq$-isolated (resp. $\preceq'$-isolated) subvector spaces of $\Q^n$. Therefore $\dim_\Q(V_{k-1}/V_{k})=\dim_\Q(V'_{k-1}/V'_{k})=d_k$ for every $k$. If $\preceq=\leq_{(u_1,\ldots, u_s)}$ and $\preceq'=\leq_{(u'_1,\ldots, u'_s)}$, we have $V_1=\lg u_1\rg^\perp\cap\Q^n$ and $V'_1=\lg u'_1\rg\cap\Q^n$. After a $\Q$-linear change of coordinates, we may assume that 
$V_1=V'_1=\{0\}\times\Q^{d_1}$, in particular $u_1=(u_{1,1},\ldots, u_{1,n-d_1},0,\ldots, 0)$ and $u'_1=(u'_{1,1},\ldots, u'_{1,n-d_1},0,\ldots, 0)$.\\
We have $\dim_\Q(\sum\Q u_{1,i})=\dim_\Q(\sum\Q u'_{1,i})=n-d_1$, therefore there exists a $(n-d_1)\times(n- d_1)$-matrix $A$ with entries in $\Q$ such that
$A\begin{bmatrix}u_{1,1}\\ \cdots\\u_{1,n-d_1}\end{bmatrix}=\begin{bmatrix}u'_{1,1}\\ \cdots\\u'_{1n-,d_1}\end{bmatrix}$. Now we apply an induction on $s$, and assume the result is true for $s-1$, that is, there is a linear $\phi':\Q^{d_1}\lgw \Q^{d_1}$, whose matrix is denoted by $B$, such that
$$\leq_{(u_2,\ldots, u_s)_{\phi'}}=\leq_{(u_2',\ldots, u'_s)}.$$
Therefore we consider the $\Q$-linear map $\phi$ whose matrix is $\begin{bmatrix} A & 0\\ 0 & B\end{bmatrix}$, and we have $\preceq_\phi=\preceq'$. This proves vii).
  \end{proof}

  \begin{ex}\label{ex_rank_deg}
  In general $\ZR^s(\Q^n)=\ZR^s(\Z^n)$ is not a closed subset for the Patch topology. For instance let us consider the sequence of rank one orders $(\preceq_n)_{n\in\N^*}$ in $\ZR(\Z^2)$ defined by
  $$\preceq_n=\leq_{u_n}$$ 
  where $u_n=(1,\frac{1}{\sqrt{2}n})$.\\
  Let $\preceq=\leq_{(u,v)}$ where $u=(1,0)$ and $v=(0,1)$.\\
  If we consider the filtration of $\Z^2$ given by $\{\mathcal G_k\}_{k\in\N}$ where $\mathcal G_k$ is the set of vectors whose coordinates are in $\{-k,\ldots, k\}$, we see that $\preceq_n$ and $\preceq$ agrees on $\mathcal G_n$. Therefore $(\preceq_n)_n$ converges to $\preceq$ in the Patch topology. But $\rk(\preceq)=2$.
  \end{ex}
  
  \begin{ex}\label{ex_rank_deg2}
Let $(u_n)\in(\Q^2)^\N$ be a sequence of vectors of norm 1 that converges to a vector $u$ with an irrational slope, and let $v_n\in(\Q^2)^\N$ be a sequence of non zero vectors with $v_n\cdot u_n=0$. Then, as in Example \ref{ex_deg}, the sequence of orders $\preceq_n:=\leq_{(u_n,v_n)}$ converges to $\leq_u$. But we have
$$\forall n\ \rk(\preceq_n)=2\text{ and } \rk(\leq_u)=1.$$
This shows that $\ZR^{\leq s}(\Z^n)$ is not closed. This also shows (along with the previous example), that there is no relation between $\limsup \rk(\preceq_n)$ or $\liminf \rk(\preceq_n)$, and $\rk(\lim_n\preceq_n)$.

\end{ex}
  
  \begin{ex}
  Example \ref{ex_deg} shows that $^d\!\ZR(\Q^n)$ is not closed in general. \\
  Again  Example \ref{ex_deg} shows that $^d\!\ZR(\Q^n)$ is not open neither. Indeed for every $n$ we have  $\deg(\preceq_n)=1$, but $\deg(\lim \preceq_n)=0$. Therefore the complement of $^0\ZR(\Z^2)$ is not closed.
  \end{ex}

%

 Hausdorff-Alexandroff Theorem asserts that any compact metric set is the image under a continuous map of a Cantor set. The following result provides an example of such a map in the case of the spheres $\SS^{n-1}$. This generalizes \cite[Proposition 3.1]{S} where such a result is given for $n=2$.
 \begin{prop}\label{HA}
The set of rank one preorders $\ZR^1(\Q^n)$, endowed with the Inverse topology, is homeomorphic to the euclidean sphere $\SS^{n-1}$.\\
 Moreover the map $\pi : \Ord(\Q^n)\lgw \ZR^1(\Q^n)$, where $\pi(\preceq)$ is the unique preorder of rank one such that $\pi(\preceq)\LEQ\preceq$ for every $\preceq\in\Ord(\Q^n)$,   is a continuous surjective map (for the Inverse topology) between an ultrametric Cantor set and the $(n-1)$-dimensional sphere.
\end{prop}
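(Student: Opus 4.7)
The plan is to handle the two halves separately: first the homeomorphism $\ZR^1(\Q^n)\simeq \SS^{n-1}$, then the properties of $\pi$.

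For the homeomorphism, I would use Theorem~\ref{rob} to write every $\preceq\in \ZR^1(\Q^n)$ as $\leq_u$ for some $u\in \R^n\setminus\{0\}$, the choice of $u$ being unique up to multiplication by $\R_{>0}$. Normalizing produces a bijection $\Phi:\SS^{n-1}\to \ZR^1(\Q^n)$, $u\mapsto\leq_u$. Under $\Phi$, the basic open sets $\U_u\cap \ZR^1(\Q^n)$ of the Inverse topology (for $u\in \Q^n$) correspond exactly to the open hemispheres $H_u=\{w\in \SS^{n-1}:u\cdot w>0\}$ with rational normals. Each such hemisphere is euclidean-open, and conversely every euclidean neighborhood of a point $w_0\in \SS^{n-1}$ contains a finite intersection $\bigcap_j H_{u_j}$ with $u_j\in \Q^n$ that still contains $w_0$; this follows from the density of $\Q^n$ in $\R^n$ by approximating an open euclidean cone around $w_0$ with rational normals. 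Hence $\Phi$ is a homeomorphism.

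To define $\pi$, given an order $\preceq=\leq_{(u_1,\ldots,u_s)}\in \Ord(\Q^n)$ with $s$ minimal, set $\pi(\preceq):=\leq_{u_1}$; by Theorem~\ref{cor_raf_toset} and Proposition~\ref{deg_rank}, this is the unique rank-one element of $\Ra(\preceq)$. For surjectivity, given $\leq_v\in \ZR^1(\Q^n)$, let $H=v^\perp\cap \Q^n$ be its residue group and choose any order $\preceq_2\in \Ord(H)$, which exists by Example~\ref{ex_ab_ord} since $H$ is a $\Q$-subspace, hence torsion-free abelian. Then $\preceq:=\leq_v\circ\preceq_2$ (Definition~\ref{def_isolated}) is an order on $\Q^n$ and, by uniqueness of the rank-one element in $\Ra(\preceq)$, $\pi(\preceq)=\leq_v$.

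The hard step will be the continuity of $\pi$. On $\Ord(\Q^n)$ the Inverse, Zariski and Patch topologies coincide, so $\Ord(\Q^n)$ is a compact metric space (in fact an ultrametric Cantor set by Theorem~\ref{main1}(ii) with $d=0$ together with Proposition~\ref{prop_metric}); hence sequential continuity suffices. I would proceed by contradiction: assuming $\preceq_k\to \preceq_0$ in $\Ord(\Q^n)$ while $\pi(\preceq_k)=\leq_{u_1^{(k)}}$ does not converge to $\pi(\preceq_0)=\leq_{u_1}$ in $\SS^{n-1}$, compactness of $\SS^{n-1}$ furnishes a subsequence $u_1^{(k_m)}\to w$ with $w\neq u_1/\|u_1\|$. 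Density of $\Q^n$ in $\R^n$ lets me pick $v\in \Q^n$ with $v\cdot u_1>0$ and $v\cdot w<0$. Then $v\succ_{\preceq_0}0$, so by Patch convergence $v\succ_{\preceq_{k_m}}0$ for large $m$; but $v\cdot u_1^{(k_m)}<0$ eventually, which forces $v\prec_{\preceq_{k_m}}0$, a contradiction. The subtle point here is that $v\succ_\preceq 0$ is strictly weaker than $v\cdot u_1>0$ (it only requires the first nonzero component $v\cdot u_i$ to be positive), so one cannot read $\pi^{-1}$ of a basic open set off from the basic opens of $\Ord(\Q^n)$ directly; the compactness-and-subsequence trick is precisely what upgrades the condition to a strict sign on $v\cdot u_1^{(k_m)}$ itself.
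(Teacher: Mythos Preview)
Your argument is correct, and the first half (the bijection with $\SS^{n-1}$ and the identification of basic opens with rational hemispheres) matches the paper almost verbatim. Surjectivity of $\pi$ is also handled the same way in spirit.

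Where you diverge is in the continuity of $\pi$. The paper gives a direct open-set argument: fixing $u\in\Q^n$ and $\preceq=\leq_{(u_1,\ldots,u_s)}\in\pi^{-1}(\U_u)$, it chooses a $\Q$-basis $v_1,\ldots,v_n$ of $\Q^n$ with $u_1\cdot(u\pm v_i)>0$ for all $i$, and then shows that
\[
\bigcap_{i=1}^n \U_{u+v_i}\cap\U_{u-v_i}\subset \pi^{-1}(\U_u).
\]
The point is that for any order $\preceq'=\leq_{(u'_1,\ldots)}$ in this intersection one gets $u'_1\cdot(u\pm v_i)\geq 0$ for all $i$, hence $u'_1\cdot u\geq 0$; equality would force $u'_1\cdot v_i=0$ for every $i$, impossible since the $v_i$ span $\Q^n$. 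This is a purely finitary trick that sidesteps exactly the subtlety you flagged (that $v\succ_{\preceq}0$ does not by itself read the sign of $v\cdot u_1$).

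Your route instead exploits that $\Ord(\Q^n)$ is metrizable (so sequential continuity suffices) and that the target $\SS^{n-1}$ is compact (so a non-convergent image sequence has a convergent subsequence with wrong limit $w$). Picking a rational $v$ separating $u_1$ from $w$ then gives the contradiction. This is equally valid and arguably more conceptual; the paper's argument, on the other hand, is self-contained at the level of basic opens and would survive in contexts where metrizability or sequential arguments are unavailable.
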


\begin{proof}
The set $\ZR^1(\Q^n)$ is the set of preorders of the form $\leq_u$ for some non zero $u\in\R^n$. In fact we can choose $u$ to be of norm 1, hence $\ZR^1(\Q^n)$ is in bijection with $\SS^{n-1}$. The Inverse topology on $\ZR^1(\Q^n)$ is generated by the $\U_v$ where $v$ runs over the vectors in $\Q^n$. But the bijection between $\ZR^1(\Q^n)$ and $\SS^{n-1}$ induces a bijection between $\U_v$ and the open half sphere $\{u\in\SS^{n-1}\mid u\cdot v>0\}$. Since $\Q$ is dense in $\R$, the sets $\{u\in\SS^{n-1}\mid u\cdot v>0\}$ where $v$ runs over $\Q^n$, generate the euclidean topology. Therefore, $\ZR^1(\Q^n)$ is homeomorphic to the $(n-1)$-dimensional sphere. \\
In order to prove that  $\pi$ is continuous, it is enough to prove that $\pi^{-1}(\U_u)$ is open in $\Ord(\Q^n)$, for every $u\in\Q^n$. Then, let us fix such a $u\in\Q^n$, $u\neq 0$. Let $\preceq\in\pi^{-1}(\U_u)$. Since $\pi(\preceq)\in\U_u$, we have $\preceq=\leq_{(u_1,\ldots, u_s)}$ where $u\cdot u_1>0$. Let $v_1$, \ldots, $v_n\in\Q^n$ be a basis of $\Q^n$, such that $u_1\cdot (u\pm v_i)>0$ for every $i$. Then $\preceq\in \bigcap_{i=1}^n\U_{u+v_i}\cap\U_{u-v_i}$. Moreover, we have
$$\bigcap_{i=1}^n\U_{u+v_i}\cap\U_{u-v_i}\subset \pi^{-1}(\U_u).$$
Indeed, let $\preceq'\in \bigcap_{i=1}^n\U_{u+v_i}\cap\U_{u-v_i}$, and write $\preceq'=\leq_{(u'_1,\ldots, u'_s)}$.
Then $u'_1\cdot (u\pm v_i)\geq 0$ for every $i$. Then $u'_1\cdot u\geq 0$. If $u_1'\cdot u=0$, we have $u'_1\cdot v_i=0$ for every $i$. This is not possible, because $u'_1\neq 1$ and the $v_i$ form a $\Q$-basis of $\Q^n$. Therefore $u'_1\cdot u>0$ and $\preceq'\in\pi^{-1}(\U_u)$. This shows that $\pi^{-1}(\U_u)$ is open in $\Ord(\Q^n)$.\\
Finally  $\Ord(\Q^n)$ is an ultrametric Cantor set by Theorem \ref{main1}.
\end{proof}

 Now we can represent $\ZR(\Q^n)$ as a tree by Proposition \ref{tree} and  Corollary \ref{tree2}. Every preorder corresponds to a vertex of the graph. For a preorder $\preceq\neq \leq_\emptyset$, we consider the largest preorder $\preceq'$ such that $\preceq'\SEQ\preceq$. Every such a pair $(\preceq,\preceq')$ corresponds to an edge between $\preceq$ and $\preceq'$. Moreover $\ZR(\Q^n)$ is a rooted tree by designing $\leq_\emptyset$ to be the root. 
 \begin{ex}

For $n=1$, $\ZR(\Q)$ consists of three elements: the trivial preorder $\leq_\emptyset$ for which $u\leq_\emptyset v$ for every $u$, $v\in{\R_\geq 0}$, and the orders $\leq_1$ and $\leq_{-1}$. Since $\leq_1$ and $\leq_{-1}$ are the two refinements of $\leq_\emptyset$, $\ZR(\Q)$ is a rooted tree with two vertices:

\begin{figure}[H]\fbox{\begin{tikzpicture}[scale=6.9]
\draw[line width=0.5pt] (0,0) -- (1,0);
\draw[line width=0.5pt] (0,0) -- (-1,0);

\filldraw
(0,0) circle (0.5pt) node[align=left,   below] {$\leq_\emptyset$}
 (1,0) circle (0.5pt) node[align=left,   below] {$\leq_1$}
 (-1,0) circle (0.5pt) node[align=right,   below] {$\leq_{-1}$}; 
 
    \end{tikzpicture}}
\caption{The tree $\ZR(\Q)$}
    \end{figure}

\end{ex}
\FloatBarrier
\begin{ex}
For $n=2$, $\ZR(\Q^2)$ can be described as follows:\\
Every order $\preceq$ on ${\R_{\geq 0}}^n$ has the form $\leq_{u_1,u_2}$ where $u_1$ and $u_2$ are nonzero orthonormal vectors. Since $\preceq$ is a preorder on ${\R_{\geq 0}}^n$ we have that $u_1$ is in the dual of ${\R_{\geq 0}}^n$, so $u_1\in {\R_{\geq 0}}^n$. Now if $u_1=\begin{psmallmatrix} a \\  b \end{psmallmatrix}$ has $\Q$-linearly independent coordinates, $\leq_{u_1}$ is already an order and has no refinement, and the data of $u_2$ is superfluous. If the coordinates of $u_1$ are linearly dependent on $\Q$ then we can choose freely $u_2$ in $\langle u_1\rangle ^\perp$. Since $\| u_2\|=1$ there are two possible choices:\\

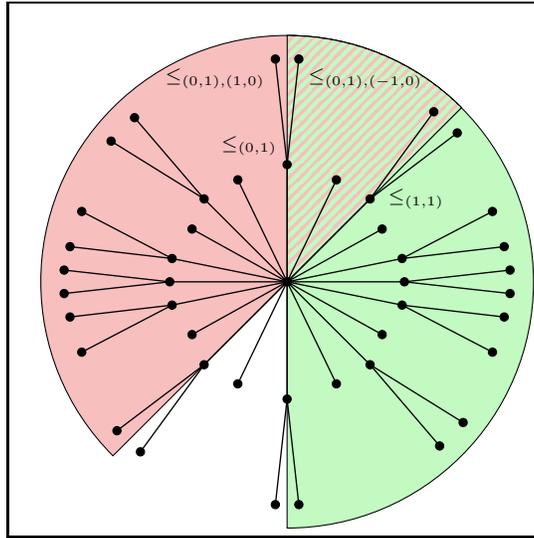
\begin{figure}[H]\fbox{\begin{tikzpicture}[scale=1.56]

   \draw[fill=lightgreen] (0:0) -- (0,2.1) 
  arc[radius = -2.1, start angle= 270, end angle= 90] -- (0:0cm) -- cycle ;
  
  \draw[fill=lightcoral] (0,0) -- (1.486,1.486) 
arc[radius = 2.1, start angle= 45, end angle= 225] -- (0:0cm) -- cycle ;

\draw[pattern=custom north east lines,hatchspread=4pt,hatchthickness=1.6pt,hatchcolor=lightgreen] (0,0) -- (1.486,1.486) 
arc[radius = 2.1, start angle= 45, end angle= 90] -- (0:0cm) -- cycle ;

\draw[line width=0.5pt] (0,0) -- (1,0);

\draw[line width=0.5pt] (0,0) -- (-1,0) ;

\draw[line width=0.5pt] (0,0) -- (0,1) node[above left] {$\scriptstyle\leq_{(0,1)}$} ;

\draw[line width=0.5pt] (0,0) -- (0.81,0.45) ;
\draw[line width=0.5pt] (0,0) -- (-0.81,0.45) ;
\draw[line width=0.5pt] (0,0) -- (-0.81,-0.45) ;

\draw[line width=0.5pt] (0,0) -- (0.42,0.87) ;

\draw[line width=0.5pt] (0,0) -- (-0.42,0.87) ;
\draw[line width=0.5pt] (0,0) -- (-0.42,-0.87) ;

\draw[line width=0.5pt] (0,0) -- (-0.707107,0.707107) ;

\draw[line width=0.5pt] (-1,0) -- (-1.9,0.1) ;
\draw[line width=0.5pt] (1,0) -- (1.9,0.1) ;


\draw[line width=0.5pt] (0,1) -- (0.1,1.9)  node[below  right] {$\scriptstyle\leq_{(0,1), (-1,0)}$} ;
\draw[line width=0.5pt] (0,1) -- (-0.1,1.9) node[below left] {$\scriptstyle\leq_{(0,1), (1,0)}$}   ;

\draw[line width=0.5pt] (0,0) -- (0.707107,0.707107) node[ right] {$\ \scriptstyle\leq_{(1,1)}$};
\draw[line width=0.5pt] (0,0) -- (0.98,0.1986693308) ;

\draw[line width=0.5pt] (0,0) -- (-0.98,0.1986693308) ;
\draw[line width=0.5pt] (0,0) -- (-0.98,-0.1986693308) ;

\draw[line width=0.5pt] (0.707107,0.707107) -- (1.45 ,1.27 ) ; 
\draw[line width=0.5pt] (0.707107,0.707107) -- (1.25, 1.45);

\draw[line width=0.5pt] (-0.707107,-0.707107) -- (-1.45 ,-1.27 ) ; 
\draw[line width=0.5pt] (-0.707107,-0.707107) -- (-1.25, -1.45);

\draw[line width=0.5pt] (0.98,0.1986693308) -- (1.85, 0.3) ;
\draw[line width=0.5pt] (0.98,0.1986693308) -- (1.75, 0.6);

\draw[line width=0.5pt] (0,0) -- (0,-1) ;

\draw[line width=0.5pt] (0,0) -- (0.81,-0.45) ;

\draw[line width=0.5pt] (0,0) -- (0.42,-0.87) ;

\draw[line width=0.5pt] (0,0) -- (-0.707107,-0.707107) ;

\draw[line width=0.5pt] (-1,0) -- (-1.9,-0.1) ;
\draw[line width=0.5pt] (1,0) -- (1.9,-0.1) ;

\draw[line width=0.5pt] (0,-1) -- (0.1,-1.9) ;
\draw[line width=0.5pt] (0,-1) -- (-0.1,-1.9) ;

\draw[line width=0.5pt] (0,0) -- (0.707107,-0.707107) ;
\draw[line width=0.5pt] (0,0) -- (0.98,-0.1986693308) ;

\draw[line width=0.5pt] (0.707107,-0.707107) -- (1.3, -1.4) ; 
\draw[line width=0.5pt] (0.707107,-0.707107) -- (1.5 ,-1.2);

\draw[line width=0.5pt] (-0.707107,0.707107) -- (-1.5 ,1.2 ) ; 
\draw[line width=0.5pt] (-0.707107,0.707107) -- (-1.3, 1.4);

\draw[line width=0.5pt] (0.98,-0.1986693308) -- (1.85, -0.3) ;
\draw[line width=0.5pt] (0.98,-0.1986693308) -- (1.75, -0.6);

\draw[line width=0.5pt] (-0.98,0.1986693308) -- (-1.85, 0.3) ;
\draw[line width=0.5pt] (-0.98,0.1986693308) -- (-1.75, 0.6);

\draw[line width=0.5pt] (-0.98,-0.1986693308) -- (-1.85, -0.3) ;
\draw[line width=0.5pt] (-0.98,-0.1986693308) -- (-1.75, -0.6);
   
\filldraw (0,0) circle (1.pt) 
 (-1,0) circle (1.pt) 
 (1,0) circle (1.pt) 
  (-1.9,0.1) circle (1.pt) 
   (1.9,0.1) circle (1.pt)
   (0.42,0.87) circle (1.pt)
    (0,1) circle (1.pt)
    (0.81,0.45) circle (1.pt)
     (-0.707107,0.707107) circle (1.pt) 

   (1.45 ,1.27 ) circle (1.pt)
         (-1.5 ,1.2 ) circle (1.pt) 
        
        (0.707107,0.707107) circle (1.pt)
          (0.98,0.1986693308) circle (1.pt)
                    (-0.98,0.1986693308) circle (1.pt)
                         (-0.98,-0.1986693308) circle (1.pt)
       (1.25, 1.45) circle (1.pt)
              (-1.3, 1.4) circle (1.pt)
    (1.85, 0.3) circle (1.pt)
     (-1.85, 0.3) circle (1.pt)   
      (-1.85, -0.3) circle (1.pt)         
    (1.75, 0.6) circle (1.pt) 
     (-1.75, 0.6) circle (1.pt)     
       (-1.75, -0.6) circle (1.pt)     
    (0.1,1.9) circle (1.pt)  
     (-0.1,1.9) circle (1.pt)         
    (-1.9,-0.1) circle (1.pt) 
   (1.9,-0.1) circle (1.pt)
   (0.42,-0.87) circle (1.pt)
      (-0.42,0.87) circle (1.pt)
      (-0.42,-0.87) circle (1.pt)
    (0,-1) circle (1.pt)
    (0.81,-0.45) circle (1.pt)
      (-0.81,-0.45) circle (1.pt)
        (-0.81,0.45) circle (1.pt)
     (-0.707107,-0.707107) circle (1.pt) 
      (1.5 ,-1.2 ) circle (1.pt)
         (-1.45 ,-1.27 ) circle (1.pt) 
        (0.707107,-0.707107) circle (1.pt)
          (0.98,-0.1986693308) circle (1.pt)
            (1.3, -1.4) circle (1.pt)
             (-1.25, -1.45)circle (1.pt) 
    (1.85, -0.3) circle (1.pt)         
    (1.75, -0.6) circle (1.pt)     
    (0.1,-1.9) circle (1.pt)  
     (-0.1,-1.9) circle (1.pt)         
;
 \end{tikzpicture}}

\caption{The tree $\ZR(\Q^2)$. The green zone is the set $\O_{(1,0)}$ (if we include the boundary), or $\U_{(1,0)}$ (if we remove the boundary). The red zone is $\O_{(-1,1)}$, and the intersection of both is $\O_{(1,0)}\cap\O_{(-1,1)}$.}
\end{figure}
\end{ex} 

\FloatBarrier

\begin{ex}
In dimension 3, we have the following picture:
\begin{figure}[H]
\fbox{\begin{tikzpicture}[scale=0.87]

\draw[line width=0.5pt] (0,0) -- (2,0);

\draw[line width=0.5pt] (0,0) -- (-1.6,-.7);
\draw[line width=0.5pt] (0,0) -- (-1.69,-1.1);
\draw[line width=0.5pt] (0,0) -- (-0.6,1.5);
\draw[line width=0.5pt] (0,0) -- (0.5,-1.7);
\draw[line width=0.5pt] (0,0) -- (1.92,0.55);
\draw[line width=0.5pt] (0,0) -- (1.79,0.9);
\draw[line width=0.5pt] (0,0) -- (.2,1.9);
\draw[line width=0.5pt] (0,0) -- (-1.7,0.55);
\draw[line width=0.5pt] (0,0) -- (-0.9,-.2);
\draw[line width=0.5pt] (0,0) -- (-.3,-1.2);
\draw[line width=0.5pt] (0.5,-1.7)-- (0.65,-2.5);
\draw[line width=0.5pt] (0.5,-1.7)-- (0.85,-2.45);
\draw[line width=0.5pt] (-1.69,-1.1) -- (-2.5,-1.3);
\draw[line width=0.5pt] (-1.69,-1.1) -- (-2.15,-1.8);
\draw[line width=0.5pt] (1.4,-0.43) -- (2.3,-0.5);
\draw[line width=0.5pt] (1.4,-0.43) -- (2.2,-0.85);
\draw[line width=0.5pt] (-0.6,1.5) --(-0.75,2.3);
\draw[line width=0.5pt] (-0.6,1.5) -- (-1.15,2.15);

\draw[line width=0.5pt] (0,0) -- (-2,0) ;

\draw[line width=0.5pt] (0,0) -- (0,2) ;

\draw[line width=0.5pt] (0,0) -- (1.4,-0.43);

\draw[line width=0.5pt] (0,0) -- (-0.6,-0.57);
\draw[line width=0.5pt] (0,0) -- (0.6,0.57);
\draw[line width=0.5pt] (0,0) -- (-0.6,0.57);

\draw[line width=0.5pt] (0,0) -- (0,-2) ;

\draw[line width=0.5pt] (0,0) -- (0.7,1.3);

\draw[line width=0.5pt] (2,0) -- (2.77,0.3) ;

\draw[line width=0.5pt] (2,0) -- (3,1) ;
\draw[line width=0.5pt] (2,0) -- (3,-1) ;
\draw[line width=0.5pt] (2,0) -- (3.24,0.2) ;
\draw[line width=0.5pt] (2,0) -- (3.23,-0.4) ;
\draw[line width=0.5pt] (2,0) -- (2.81,-0.6) ;
\draw[line width=0.5pt] (2,0) -- (3.25,-0.15) ;
\draw[line width=0.5pt] (2,0) -- (3.18,0.7) ;

\draw[line width=0.5pt] (3.25,-0.15) -- (4,-0.25) ;
\draw[line width=0.5pt] (3.25,-0.15) -- (4,-0.05) ;

\draw[line width=0.5pt] (2.81,-0.6) -- (3.6,-0.5) ;
\draw[line width=0.5pt] (2.81,-0.6) -- (3.6,-0.7) ;

\draw[line width=0.5pt] (3,1) -- (3.8,1.10) ;
\draw[line width=0.5pt] (3,1) -- (3.8,.90) ;

\draw[line width=0.5pt] (3,-1) -- (3.8,-1.10) ;
\draw[line width=0.5pt] (3,-1) -- (3.8,-.90) ;

\draw[line width=0.5pt] (-2,0) -- (-2.77,0.3) ;

\draw[line width=0.5pt] (-2,0) -- (-3,1) ;
\draw[line width=0.5pt] (-2,0) -- (-3,-1) ;
\draw[line width=0.5pt] (-2,0) -- (-3.24,0.2) ;
\draw[line width=0.5pt] (-2,0) -- (-3.23,-0.4) ;
\draw[line width=0.5pt] (-2,0) -- (-2.81,-0.6) ;
\draw[line width=0.5pt] (-2,0) -- (-3.18,0.7) ;
\draw[line width=0.5pt] (-2,0) -- (-3.25,-0.15) ;

\draw[line width=0.5pt] (-3.25,-0.15) -- (-4,-0.25) ;
\draw[line width=0.5pt] (-3.25,-0.15) -- (-4,-0.05) ;

\draw[line width=0.5pt] (-2.81,-0.6) -- (-3.6,-0.5) ;
\draw[line width=0.5pt] (-2.81,-0.6) -- (-3.6,-0.7) ;

\draw[line width=0.5pt] (-3,1) -- (-3.8,1.10) ;
\draw[line width=0.5pt] (-3,1) -- (-3.8,.90) ;

\draw[line width=0.5pt] (-3,-1) -- (-3.8,-1.10) ;
\draw[line width=0.5pt] (-3,-1) -- (-3.8,-.90) ;
\draw[line width=0.5pt] (0,-2) -- (0.3,-2.77) ;

\draw[line width=0.5pt] (0,-2) -- (1,-3) ;
\draw[line width=0.5pt] (0,-2) -- (-1,-3) ;
\draw[line width=0.5pt] (0,-2) -- (0.2,-3.24) ;
\draw[line width=0.5pt] (0,-2) -- (-0.4,-3.23) ;
\draw[line width=0.5pt] (0,-2) -- (-0.6,-2.81) ;
\draw[line width=0.5pt] (0,-2) -- (-0.15,-3.25) ;
\draw[line width=0.5pt] (0,-2) -- (0.7,-3.18) ;

\draw[line width=0.5pt] (-0.15,-3.25) -- (-0.25,-4) ;
\draw[line width=0.5pt] (-0.15,-3.25) -- (-0.05,-4) ;

\draw[line width=0.5pt] (-0.6,-2.81) -- (-0.5,-3.6) ;
\draw[line width=0.5pt] (-0.6,-2.81) -- (-0.7,-3.6) ;

\draw[line width=0.5pt] (1,-3) -- (1.10,-3.8) ;
\draw[line width=0.5pt] (1,-3) -- (.90,-3.8) ;

\draw[line width=0.5pt] (-1,-3) -- (-1.10,-3.8) ;
\draw[line width=0.5pt] (-1,-3) -- (-.90,-3.8) ;
\draw[dashed, ultra thin] (0,3) circle [x radius=10mm, y radius=0.25cm];
\draw[dashed, ultra thin] (0,-3) circle [x radius=10mm, y radius=0.25cm];
\draw[line width=0.5pt] (0,2) -- (0.3,2.77) ;

\draw[line width=0.5pt] (0,2) -- (1,3) ;
\draw[line width=0.5pt] (0,2) -- (-1,3) ;
\draw[line width=0.5pt] (0,2) -- (0.2,3.24) ;
\draw[line width=0.5pt] (0,2) -- (-0.4,3.23) ;
\draw[line width=0.5pt] (0,2) -- (-0.6,2.81) ;
\draw[line width=0.5pt] (0,2) -- (-0.15,3.25) ;
\draw[line width=0.5pt] (0,2) -- (0.7,3.18) ;

\draw[line width=0.5pt] (-0.15,3.25) -- (-0.25,4) ;
\draw[line width=0.5pt] (-0.15,3.25) -- (-0.05,4) ;

\draw[line width=0.5pt] (-0.6,2.81) -- (-0.5,3.6) ;
\draw[line width=0.5pt] (-0.6,2.81) -- (-0.7,3.6) ;

\draw[line width=0.5pt] (1,3) -- (1.10,3.8) ;
\draw[line width=0.5pt] (1,3) -- (.90,3.8) ;

\draw[line width=0.5pt] (-1,3) -- (-1.10,3.8) ;
\draw[line width=0.5pt] (-1,3) -- (-.90,3.8) ;

\draw[line width=0.5pt] (0,0) -- (-1.05,-1.7) ;
\draw[line width=0.5pt] (0,0) -- (-1.6,1.2) ;
\draw[line width=0.5pt] (0,0) -- (1.2,-1.3) ;

\draw[line width=0.5pt] (1.2,-1.3) -- (2,-1.6) ;
\draw[line width=0.5pt] (1.2,-1.3) -- (1.5,-2.1) ;

  \draw[dashed, line width=0.01pt] (0,0) circle (2cm);
  \draw[dashed, line width=0.01pt] (-2,0) arc (180:360:2 and 0.6);
  \draw[dashed,line width=0.01pt] (2,0) arc (0:180:2 and 0.6);
  \fill[fill=black] (0,0) circle (1pt);
\draw[dashed, ultra thin] (3,0) circle [x radius=.25cm, y radius=10mm];

\draw[dashed, ultra thin] (-3,0) circle [x radius=.25cm, y radius=10mm];

\filldraw (2,-1.6) circle (1.pt);
\filldraw (1.5,-2.1) circle (1.pt);
\filldraw (-1.05,-1.7) circle (1.pt);
\filldraw (-1.6,1.2) circle (1.pt);
\filldraw (1.2,-1.3) circle (1.pt);
\filldraw (0,0) circle (1.pt);
\filldraw (2,0) circle (1.pt);
\filldraw (-2,0) circle (1.pt);
\filldraw (3,1) circle (1.pt);
\filldraw (3,-1) circle (1.pt);
\filldraw (2.81,-0.6) circle (1.pt);
\filldraw (3.25,-0.15) circle (1.pt);
\filldraw (3.8,-.90) circle (1.pt);

\filldraw (3.8,-1.10) circle (1.pt);
\filldraw (3.6,-0.7) circle (1.pt);
\filldraw (3.6,-0.5) circle (1.pt);
\filldraw (4,-0.05) circle (1.pt);
\filldraw (4,-0.25) circle (1.pt);
\filldraw (3.18,0.7) circle (1.pt);
\filldraw (3.23,-0.4) circle (1.pt);
\filldraw (3.24,0.2) circle (1.pt);
\filldraw (2.77,0.3) circle (1.pt);

\filldraw (0.65,-2.5) circle (1.pt);
\filldraw (0.85,-2.45) circle (1.pt);
\filldraw (-2.5,-1.3) circle (1.pt);
\filldraw  (-2.15,-1.8) circle (1.pt);
\filldraw  (2.3,-0.5) circle (1.pt);
\filldraw  (2.2,-0.85) circle (1.pt);
\filldraw (-0.75,2.3) circle (1.pt);
\filldraw  (-1.15,2.15) circle (1.pt);
\filldraw (-1.6,-.7) circle (1.pt);
\filldraw (-1.69,-1.1) circle (1.pt);
\filldraw (-0.6,1.5) circle (1.pt);
\filldraw (0.5,-1.7) circle (1.pt);
\filldraw (1.92,0.55) circle (1.pt);
\filldraw (.2,1.9) circle (1.pt);
\filldraw (1.79,0.9) circle (1.pt);
\filldraw (-1.7,0.55) circle (1.pt);
\filldraw (-0.9,-.2) circle (1.pt);
\filldraw (-.3,-1.2) circle (1.pt);
\filldraw (1,3) circle (1.pt);
\filldraw (-1,3) circle (1.pt);
\filldraw (-0.6,2.81) circle (1.pt);
\filldraw (-0.15,3.25) circle (1.pt);
\filldraw (-.90,3.8) circle (1.pt);

\filldraw (-1.10,3.8) circle (1.pt);
\filldraw (-0.7,3.6) circle (1.pt);
\filldraw (-0.5,3.6) circle (1.pt);
\filldraw (-0.05,4) circle (1.pt);
\filldraw (-0.25,4) circle (1.pt);
\filldraw (0.7,3.18) circle (1.pt);
\filldraw (-.4,3.23) circle (1.pt);
\filldraw (.2,3.24) circle (1.pt);
\filldraw (.3,2.77) circle (1.pt);

\filldraw (1.1,3.8) circle (1.pt);
\filldraw (.9,3.8) circle (1.pt);
\filldraw (1,-3) circle (1.pt);
\filldraw (-1,-3) circle (1.pt);
\filldraw (-0.6,-2.81) circle (1.pt);
\filldraw (-0.15,-3.25) circle (1.pt);
\filldraw (-.90,-3.8) circle (1.pt);

\filldraw (-1.10,-3.8) circle (1.pt);
\filldraw (-0.7,-3.6) circle (1.pt);
\filldraw (-0.5,-3.6) circle (1.pt);
\filldraw (-0.05,-4) circle (1.pt);
\filldraw (-0.25,-4) circle (1.pt);
\filldraw (0.7,-3.18) circle (1.pt);
\filldraw (-.4,-3.23) circle (1.pt);
\filldraw (.2,-3.24) circle (1.pt);
\filldraw (.3,-2.77) circle (1.pt);

\filldraw (1.1,-3.8) circle (1.pt);
\filldraw (.9,-3.8) circle (1.pt);
\filldraw (-2.81,-0.6) circle (1.pt);
\filldraw (-3.25,-0.15) circle (1.pt);
\filldraw (-3.8,-.90) circle (1.pt);

\filldraw (-3.8,-1.10) circle (1.pt);
\filldraw (-3.6,-0.7) circle (1.pt);
\filldraw (-3.6,-0.5) circle (1.pt);
\filldraw (-4,-0.05) circle (1.pt);
\filldraw (-4,-0.25) circle (1.pt);
\filldraw (-3.18,0.7) circle (1.pt);
\filldraw (-3.23,-0.4) circle (1.pt);
\filldraw (-3.24,0.2) circle (1.pt);
\filldraw (-2.77,0.3) circle (1.pt);

\filldraw (-3.8,1.10) circle (1.pt);
\filldraw (-3.8,.90) circle (1.pt);

\filldraw (-0.6,-0.57) circle (1.pt);

\filldraw (3.8,1.10) circle (1.pt);
\filldraw (3.8,.90) circle (1.pt);
\filldraw (0,2) circle (1.pt);
\filldraw (0,-2) circle (1.pt);
\filldraw (0.6,0.57) circle (1.pt);
\filldraw (-0.6,0.57) circle (1.pt);
\filldraw (1.4,-0.43) circle (1.pt);
\filldraw (0.7,1.3) circle (1.pt);
\end{tikzpicture}}

\caption{The tree $\ZR(\Q^3)$}.
\end{figure}
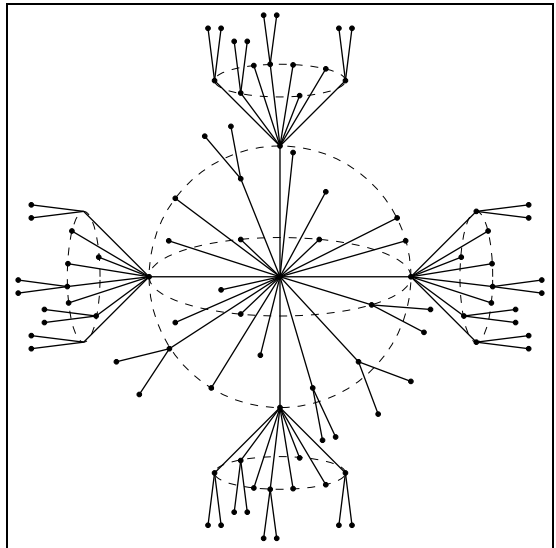

\end{ex}

\subsection{Some non commutative groups}
\subsubsection{The Klein Bottle group}
Let $G=\langle x,y\mid xyx^{-1}=y^{-1}\rangle$. This is the fundamental group of the Klein Bottle.

\begin{lem}\label{lem_conv}
Let $\preceq\in\ZR_l(G)$ with  $x\not\sim_\preceq 1$. Then the subgroup generated by $y$ is a $\preceq$-isolated normal subgroup of $G$.
\end{lem}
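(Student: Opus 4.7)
The normality of $\langle y\rangle$ is immediate from the relation $xyx^{-1}=y^{-1}\in\langle y\rangle$. For the $\preceq$-isolation, every element of $G$ has a unique normal form $v=x^{a}y^{c}$ with $a,c\in\Z$, and from $xy=y^{-1}x$ one deduces by induction the commutation rule $y^{k}x^{a}=x^{a}y^{(-1)^{a}k}$. The plan is to show that whenever $a\neq 0$, the element $v$ is strictly greater than, or strictly less than, every element of $\langle y\rangle$, so that $v$ cannot lie in any interval bounded by two powers of $y$. Because $x^{-1}yx=y^{-1}$ also holds (conjugate $xyx^{-1}=y^{-1}$ by $x^{-1}$), the case $x\prec 1$ will be symmetric to $x\succ 1$ via the automorphism $x\mapsto x^{-1},\,y\mapsto y$, so I may assume $x\succ 1$.

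The key step will be the following lemma: \emph{for every $a\geq 1$ and every $m\in\Z$, $x^{a}y^{m}\succ 1$.} I would prove this by induction on $a$. The inductive step is immediate: $x^{a+1}y^{m}=x\cdot x^{a}y^{m}$, and left-multiplying the inductive hypothesis $x^{a}y^{m}\succ 1$ by $x\succ 1$ gives $x^{a+1}y^{m}\succ x\succ 1$. For the base case $a=1$, I would first handle $y\succeq 1$. If $m\leq 0$, then $y^{-m}\succeq 1$ and left-multiplying $x\succ 1$ by $y^{-m}$ yields $xy^{m}=y^{-m}x\succ y^{-m}\succeq 1$. If $m\geq 1$, I would establish $x\succ y^{m}$ by contradiction: assuming $y^{m}\succeq x$, left-multiplication by $x$ gives $y^{-m}x=xy^{m}\succeq x^{2}$; then left-multiplication by $y^{m}$ gives $x\succeq y^{m}x^{2}=x^{2}y^{m}$, using that $x^{2}$ centralizes $y$ (since $x^{2}y=x(y^{-1}x)=(yx)x=yx^{2}$); finally left-multiplication by $x^{-2}$ yields $x^{-1}\succeq y^{m}\succeq x$, contradicting $x^{-1}\prec 1\prec x$. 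From $x\succ y^{m}$ one then gets $xy^{m}=y^{-m}x\succ 1$ by left-multiplying by $y^{-m}$. The sub-case $y\prec 1$ will reduce to $y\succeq 1$ by setting $y':=y^{-1}$, which satisfies the same relation $xy'x^{-1}=(y')^{-1}$ and has $y'\succ 1$.

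To conclude, I would fix $v=x^{a}y^{c}$ with $a\neq 0$. If $a>0$, then for every $b\in\Z$, $y^{-b}v=x^{a}y^{c-(-1)^{a}b}\succ 1$ by the key lemma, so $v\succ y^{b}$. If $a<0$, applying the lemma to $v^{-1}=y^{-c}x^{-a}$ (where $-a\geq 1$) and a parallel computation would give $v^{-1}y^{b}\succ 1$ for all $b$, equivalently $v\prec y^{b}$. In either case $v$ lies entirely on one side of $\langle y\rangle$, so no element outside $\langle y\rangle$ can be $\preceq$-sandwiched between two elements of $\langle y\rangle$; this proves the subgroup is $\preceq$-isolated. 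The main obstacle is the base case of the key lemma when $y\succeq 1$ and $m\geq 1$: this is where non-commutativity bites, and the trick will be to exploit the commutation $x^{2}y=yx^{2}$ to turn the putative inequality $y^{m}\succeq x$ into the absurdity $x^{-1}\succeq x$.
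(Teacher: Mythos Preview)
Your proof is correct and follows essentially the same strategy as the paper: show that any element $x^{a}y^{c}$ with $a\neq 0$ lies strictly on one side of $\langle y\rangle$, using the commutation rule coming from the defining relation, and reduce the various sign cases by symmetry (you do this via the automorphisms $x\mapsto x^{-1}$ and $y\mapsto y^{-1}$, the paper treats the four sign cases by hand).

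One remark: your contradiction argument in the base case $a=1$, $y\succeq 1$, $m\geq 1$ is more elaborate than necessary. Since $y^{m}\succeq 1$, left-multiplying by $x$ gives $xy^{m}\succeq x\succ 1$ directly; this is exactly what the paper does. Your detour through the centrality of $x^{2}$ is valid but avoidable. Conversely, your treatment of $m\leq 0$ (via $xy^{m}=y^{-m}x$ and left-multiplying $x\succ 1$ by $y^{-m}\succeq 1$) is precisely the paper's argument for obtaining $x\succ y^{k}$ for $k\geq 0$, so the two proofs really coincide once this simplification is made.
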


\begin{proof}
Clearly $\langle y\rangle$ is a normal subgroup of $G$. Let us prove that it is $\preceq$-isolated. \\
We remark that every element of $G$ can be written as $y^mx^n$, with $m$, $n\in\Z$. If $y\sim_\preceq 1$, then if, for some $k$, $l$, $m$, $n\in\Z$ we have 
$$y^k\preceq y^mx^n\preceq y^l$$
then $x^n\sim_\preceq 1$, and $n=0$ since $x\not\sim_\preceq 1$. Thus in this case, $\langle y\rangle$ is $\preceq$-isolated.
 Therefore we may assume that $y\not\sim_\preceq 1$ and $x\not\sim_\preceq 1$.\\
Assume that $y\succ 1$ and $x\succ 1$. Then, for every $k\in\N$, $xy^k\succ 1$. But $xy^k=y^{-k}x\succ 1$. Therefore $x\succ y^k$ for every $k\in\N$. In the same way, $x^{-1}y^{-k}=y^kx^{-1}\prec 1$, and $x^{-1}\prec y^{-k}$ for every $k\in\N$.\\
Now let $n$, $m\in\N$. If $n>0$ and $m\in\Z$, then  
$$y^mx^n\succ y^k\ \  \forall k.$$
In the same way, for every $k\in\N$, $n\in\N$, $m\in\Z$,
$y^{m}x^{-n}\prec y^{-k}$. This proves that $\langle y\rangle$ is $\preceq$-isolated.\\
Now if $y\prec 1$ and $x\prec 1$, we replace $x$ and $y$ by $x^{-1}$ and $y^{-1}$,  and, since the relation $xyx^{-1}=y^{-1}$ can be rewritten as $x^{-1}y^{-1}x=y$,  the result follows from the previous case. \\If $x\prec 1$ and $y\succ 1$, we remark that every element of $G$ can be written as $x^my^n$, with $m$, $n\in\Z$. Since $y^{-k}x^{-1}=x^{-1}y^{k}\succ 1$, we have $x^{-1}\succ y^k$ for every $k\in\N$. In the same way $x\prec y^{-k}$ for every $k\in\N$. Therefore the same reasoning applies. The case $y\succ 1$ and $x\prec 1$ is obtained by replacing $x$ and $y$ by $x^{-1}$ and $y^{-1}$.
\end{proof}
Therefore for every preorder $\preceq$, we have $G_\preceq=\langle 1\rangle$, $\langle y\rangle$, or $G$. \\
If $G_\preceq=G$, $\preceq=\preceq_\emptyset$ is bi-invariant. \\
Now, let $\preceq\in\ZR_l(G)$ and assume $x\sim_\preceq 1$. Since $xy=y^{-1}x$, we have $y^{-1}\preceq xy\preceq y^{-1}$. Therefore $y\sim_\preceq 1$, and $\preceq_\emptyset$ is the only preorder for which $x$ is equivalent to $1$.\\
 If $G_\preceq=\langle y\rangle$, $\preceq$ is the composition of an order on $G/\langle y\rangle\simeq \Z$ with the trivial preorder on $\langle y\rangle$, which is completely determined by the sign of $x$. We denote by $\preceq_{+1}$ (resp. $\preceq_{-1}$) the preorder such that $x$ is positive (resp. negative). Therefore there are two such preorders, and these are bi-invariant.\\
 Finally, if $G_\preceq=\langle 1\rangle$, since $\langle y\rangle$ is $\preceq$-isolated, the order $\preceq$ is lexicographically defined by the following short exact sequence  (see  Proposition \ref{lex_order2}):
\begin{equation}\label{short_klein}1\lgw \langle y\rangle\lgw G\lgw G/\langle y\rangle\simeq \Z\lgw 1\end{equation}  
But, since $\langle y\rangle\simeq\Z$, the only possible orders are determined by their (positive or negative) signs on $x$ and $y$. We denote by $\preceq_{\e_1,\e_2}$ the order for which the sign of $x$ (resp. of $y$) is $\e_1$ (resp. $\e_2$), where $\e_i=\pm 1$. Moreover we have
 $$\preceq_{+1}\SEQ \preceq_{+1,\e_2}\ \ \forall \e_2,$$
 $$\preceq_{-1}\SEQ \preceq_{-1,\e_2}\ \ \forall \e_2.$$ 
Finally, these orders are not bi-invariant since $xyx^{-1}=y^{-1}$.

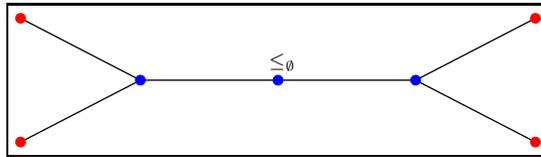
\begin{figure}[H]\fbox{\begin{tikzpicture}[scale=1.83]

\draw[line width=0.5pt] (0,0) node[ above] {$\ \scriptstyle\leq_{\emptyset}$} -- (1,0);

\draw[line width=0.5pt] (0,0) -- (-1,0) ;


\draw[line width=0.5pt] (1,0) -- (1.87,0.45) ;
\draw[line width=0.5pt] (1,0) -- (1.87,-0.45) ;
\draw[line width=0.5pt] (-1,0) -- (-1.87,-0.45) ;
\draw[line width=0.5pt] (-1,0) -- (-1.87,+0.45) ;

\filldraw[blue] (0,0) circle (1.pt) ;
\filldraw[blue] (1,0) circle (1.pt);
\filldraw[blue] (-1,0) circle (1.pt);
\%filldraw[blue] (0,-0.6) circle (1.pt);

\filldraw[red] (1.87,0.45) circle (1.pt);
\filldraw[red]  (-1.87,0.45) circle (1.pt);
\filldraw[red]  (1.87,-0.45) circle (1.pt);
\filldraw[red]  (-1.87,-0.45) circle (1.pt);
\end{tikzpicture}}
\caption{The tree $\ZR_l(G)$ where $G$ is the Klein Bottle group. The bi-invariant preorders are in blue, the other ones in red.}.
\end{figure}

\subsubsection{Some groups with no non-trivial preorders}
If $G$ is a torsion group, we have that $\ZR_l(G)=\{\leq_{\emptyset}\}$. But there are also torsion free groups for which $\ZR_l(G)=\{\leq_\emptyset\}$. One example is given as follows (this example is the group $G'$ of \cite[Example 1.64]{CR} for which it is shown that $\Ord_l(G')=\emptyset$).\\
We consider the Klein bottle group $G$ given in the previous example. If we set
$$a=xy \text{ and } b=y^{-1}xy$$
we obtain the presentation
$G=\lg a,b\mid a^2=b^2\rg.$
We remark that the subgroup $H$ generated by $a^2$ and $ab$, is isomorphic to $\Z^2$.\\
Now we consider the group given in \cite[Example 1.64]{CR}: we consider two copies of $G$, denoted by $G_1$, $G_2$, whose generators are $a_1$, $b_1$ and $a_2$, $b_2$ respectively. We denote by $H_1$ and $H_2$ the respective subgroups isomorphic to $H$. We denote by $G'$ the amalgamated free product $G_1*G_2$ along $H_1\simeq H_2$, where the isomorphism between $H_1$ and $H_2$ is given by
$$a_1^2=(a_2^2)^p(a_2b_2)^q\text{ and } a_1b_1=(a_2)^r(a_2b_2)^s$$
where $p$, $q$, $r$, $s\in\Z$ and $ps-rq=\pm 1$.
We have the presentation
$$G'=\lg a_1,a_2,b_1,b_2\mid a_1^2=b_1^2, a_2^2=b_2^2, a_1^2=(a_2^2)^p(a_2b_2)^q\text{ and } a_1b_1=(a_2)^r(a_2b_2)^s\rg.$$
Now assume that $p$, $q\geq 0$ and $r$, $s\leq 0$ and let $\preceq\in\ZR_l(G')$. Then the first relation in $G'$ implies that $a_1$ and $b_1$ are both non-negative or both non-positive for $\preceq$. In the same way $a_2$ and $b_2$ are both non-negative or both non-positive for $\preceq$. \\
The third relation implies that if $a_2$, $b_2\succeq 1$ (resp. $a_2$, $b_2\preceq 1$), then $a_1\succeq 1$ (resp $a_1\preceq 1$). But the last relation implies that
$$a_2, b_2\succeq 1 \ (\text{resp. } a_2,b_2\preceq 1) \Longrightarrow a_1b_1\preceq 1 \ (\text{resp. } a_1b_1\succeq 1)$$
Therefore we have $a_1$, $b_1\sim_\preceq 1$. This implies that $a_2$, $b_2\sim_\preceq 1$. Therefore $\preceq=\leq_{\emptyset}$.\\
Moreover $G'$ is torsion free, since it is the amalgamated product of two torsion free groups.\\
\\
Now, if we set $G''=G'\times \Z$, $G''$ is a torsion free group, $G'$ is a normal subgroup of $G''$, and $G'\subset G''_\preceq$ for every $\preceq\in\ZR_l(G'\times \Z)$ by the previous reasoning. Therefore $\ZR_l(G'\times\Z)$ is homeomorphic to $\ZR_l(\Z)=\ZR(\Z)$. Thus $\Ord_l(G'')=\emptyset$, and $\ZR_l(G'')=\ZR(G'')\neq \{\leq_\emptyset\}$. 


\section{The Zariski-Riemann space of valuations}
\subsection{From preorders to (monomial) valuations}
\begin{defi}
A pair $(G,\preceq)$ is called a \emph{$l$-group} if $G$ is a group, $\preceq\in\Ord(G)$ (in particular it is bi-invariant), and $G$ is lattice with respect to the order $\preceq$.
\end{defi}

\begin{defi}\cite{Sc}\label{def_val}
Let $\K$ be a division ring and $(G,\preceq)$ be a $l$-group. A \emph{valuation} on $\K$ with values in $G$ is a surjective function $\nu:\K\lgw G\cup\{\infty\}$ such that
\begin{enumerate}
\item[i)] $\nu(0)=\infty\succ u$ for all $u\in G$, and $\nu^{-1}(\infty)=\{0\}$,
\item[ii)] $\nu(uv)=\nu(u)+\nu(v)$ for all $u$, $v\in G$,
\item[iii)] $\nu(u+v)\succeq\min\{\nu(u),\nu(v)\}$ for all $u$, $v\in G$.
\end{enumerate}
In this case, the group $G$ is the \emph{value group} of $\nu$ and is denoted by $\G_\nu$.
\end{defi}

\begin{rem}
The set $V_\nu:=\{x\in\K\mid \nu(x)\geq 0\}$ is a ring with the two following properties:
\begin{enumerate}
\item[a)] $\forall a\in V_\nu$, $\forall b\in\K^*$, $bab^{-1}\in V_\nu$.
\item[b)] $\forall a\in\K^*$ , $a\in V_\nu$ or $a^{-1}\in V_\nu$.
\end{enumerate}
On the other hand, every subring $V\subset\K$ satisfying a) and b) is called a \emph{valuation ring} and there is a $l$-group $G$ and a valuation $\nu:\K\lgw G\cup\{\infty\}$ such that $V=V_\nu$. See \cite{Sc} for the details.
\end{rem}

\begin{defi}
Let $G$ be a  group and $\k$ be a field.
Let us denote by $\K_G^\k$ the division $\k$-algebra of non commutative polynomials with exponents in $G$ and coefficients in $\k$:
$$\K_G^\k:=\left\{\sum_{g\in G}a_gx^g\mid a_g\in \k\right\}$$
where the addition is defined term by term and the multiplication is defined by $x^g x^{g'}:=x^{gg'}$.  It has a multiplicative identity which is $x^1$.
\end{defi}

\begin{defi}\label{val}
Let $G$ be a group and let $\preceq\in \ZR(G)$ such that $(G/G_\preceq,\preceq)$ is a $l$-group. Then $\preceq$ defines a monomial valuation $\nu_\preceq$ on $\K_G^\k$ in the following way:\\
$$\nu_\preceq(x^g)=\ovl g\in G/G_{\preceq}$$
where $\ovl g$ denotes the equivalence class of $g$ under $\sim_\preceq$.
\end{defi}

\begin{defi}
Let $V$ be a valuation ring and let $\p$ be a two-sided prime ideal. Then the localization $V_\p$ is the set of equivalence classes $(v,s)\in V\times (V\backslash\p)$ under the equivalence
$$(v,s)\sim (v',s') \text{ if } vs'=v's.$$
the set of equivalent classes is denoted by $\frac{v}{s}$. This a ring because for every $(v,s)\in V\times (V\backslash\p)$ there exist $(v',s')\in V\times (V\backslash\p)$ such that
$$vs'=sv'.$$
This comes from the fact that $\p$ is a two-sided ideal (see \cite[Lemma 7]{Sc} for the details).
\end{defi}

\begin{defi}\cite[Theorem 2]{Sc}\cite[Proposition 4.1]{Va}
Let $G$ be a group and $H$ be an isolated normal proper subgroup of $G$. Let $V$ be a valuation ring of value group $G$ and let  $\nu:V\lgw G$ be the associated valuation. Let $\p$ be the two-sided prime ideal of $V$ corresponding to $H$, that is $\p=\nu^{-1}(G\backslash H)$. Then $V_\p$ is a valuation ring of value group $G/H$, whose valuation is denoted by $\nu'$ and is defined by 
$$\nu'(v/s)=\nu( v)\text{ mod. }H.$$ 
On the other hand $V/\p$ is a valuation ring of value group $H$, whose valuation is denoted by $\ovl\nu$, and is defined by
$$\forall a\notin\p,\ \ovl\nu(a+\p)=\nu(a).$$
We say that $\nu$ is the composition of $\nu'$ and $\ovl\nu$ and denote
$$\nu=\nu'\circ\ovl\nu.$$

\end{defi}
The following proposition shows that the composition of preorders corresponds to the composition of valuations:

\begin{prop}\label{ca se compose}
Let $G$ be a group 	and $\k$ be a field. Let $\preceq_1\in \Ord(G)$ and $\preceq_2\in \ZR(G)$  with $\preceq_2\,\boldsymbol{\leq}\,\preceq_1$, and such that $G_{\preceq_2}$ is a normal subgroup of $G$. We have three valuations
$$\nu_{\preceq_1}: \K_G^\k\lgw G,$$
$$\nu_{\preceq_2}:\K_{G}^\k\lgw G/G_{\preceq_2},$$
$$\nu_{\preceq_3}:\K_{G_{\preceq_2}}^\k\lgw G_{\preceq_2}$$
where $\preceq_3$ is the restriction of $\preceq_1$ to $G_{\preceq_2}$, that is $\preceq_1=\preceq_2\circ\preceq_3$. 
Then 
$$\nu_{\preceq_1}=\nu_{\preceq_2}\circ\nu_{\preceq_3}.$$
\end{prop}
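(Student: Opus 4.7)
My strategy is to apply the composition theorem \cite[Theorem~2]{Sc} to the valuation $\nu_{\preceq_1}$ and the subgroup $G_{\preceq_2}\subset G$, producing a factorization $\nu_{\preceq_1}=\nu'\circ\ovl{\nu}$, and then to identify $\nu'$ with $\nu_{\preceq_2}$ and $\ovl{\nu}$ with $\nu_{\preceq_3}$.

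To apply the theorem I first verify its hypotheses: $G_{\preceq_2}$ is normal in $G$ by assumption, and since $\preceq_2\LEQ\preceq_1$ it is $\preceq_1$-isolated by Lemma~\ref{isolated}. The theorem therefore provides a valuation $\nu'$ on $\K_G^\k$ with value group $G/G_{\preceq_2}$, defined by $\nu'(f)=\nu_{\preceq_1}(f)\bmod G_{\preceq_2}$, together with a valuation $\ovl{\nu}$ on the residue ring $V_{\nu'}/\m_{\nu'}$ with value group $G_{\preceq_2}$, defined by $\ovl{\nu}(a+\m_{\nu'})=\nu_{\preceq_1}(a)$ for $a\notin\m_{\nu'}$.

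The identification $\nu'=\nu_{\preceq_2}$ is straightforward on monomials: for every $g\in G$ one has $\nu'(x^g)=g\bmod G_{\preceq_2}=\nu_{\preceq_2}(x^g)$ by Definition~\ref{val}. Since both $\nu'$ and $\nu_{\preceq_2}$ are monomial valuations on $\K_G^\k$ with values in $G/G_{\preceq_2}$, agreement on the generators $\{x^g\}_{g\in G}$ forces equality of the associated valuation rings (both equal $\{\sum a_g x^g : a_g\neq 0\Rightarrow \ovl{g}\succeq_2 0\}$), and hence of the valuations themselves.

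To identify $\ovl{\nu}$ with $\nu_{\preceq_3}$ I would describe the residue of $\nu_{\preceq_2}$ concretely: every $f\in V_{\nu_{\preceq_2}}$ splits as $f=f_0+f_+$ where $f_0=\sum_{g\in G_{\preceq_2}}a_g x^g$ and $f_+=\sum_{\ovl{g}\succ_2 0}a_g x^g\in \m_{\nu_{\preceq_2}}$, so that the canonical map $\K_{G_{\preceq_2}}^\k\to V_{\nu_{\preceq_2}}/\m_{\nu_{\preceq_2}}$ is a ring isomorphism. Under this identification, $\ovl{\nu}(x^g+\m_{\nu_{\preceq_2}})=\nu_{\preceq_1}(x^g)=g=\nu_{\preceq_3}(x^g)$ for every $g\in G_{\preceq_2}$, so the same monomial argument yields $\ovl{\nu}=\nu_{\preceq_3}$; combining the two identifications gives $\nu_{\preceq_1}=\nu_{\preceq_2}\circ\nu_{\preceq_3}$. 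The main technical subtlety I anticipate is the non-commutativity of $\K_G^\k$: one must check that $\m_{\nu_{\preceq_2}}$ is a two-sided ideal (which uses that $G_{\preceq_2}$ is normal in $G$) and that the residue identification is truly a ring homomorphism (which uses that $G_{\preceq_2}$ is a subgroup, so closed under $x^g x^{g'}=x^{gg'}$).
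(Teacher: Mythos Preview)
Your proposal is correct and follows essentially the same route as the paper's proof: apply Schilling's composition theorem to the valuation $\nu_{\preceq_1}$ and the isolated normal subgroup $G_{\preceq_2}$, obtain the factorization $\nu_{\preceq_1}=\nu'\circ\ovl{\nu}$, and then identify $\nu'$ with $\nu_{\preceq_2}$ and $\ovl{\nu}$ with $\nu_{\preceq_3}$ by checking agreement on the monomials $x^g$. The only cosmetic differences are that the paper phrases things via the prime ideal $\p=\nu_{\preceq_1}^{-1}(G\setminus G_{\preceq_2})$ and the pair $(V_\p,V/\p)$ rather than via $(V_{\nu'},V_{\nu'}/\m_{\nu'})$, and it does not spell out (as you do) the verification that $G_{\preceq_2}$ is $\preceq_1$-isolated or the two-sidedness issues coming from non-commutativity.
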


\begin{proof}
Let $V$ be the valuation ring of $\nu_1$. The set 
$$\p:=\{f\in V\mid \nu_{\preceq_1}(f)\in G\backslash G_{\preceq_2}\}$$
is a two-sided prime ideal of $V$. The ring $V_\p$ is a valuation ring with value group $G/G_{\preceq_2}$. Its valuation $\nu'$ is the valuation sending an element $\frac{v}{s}  $, for $s\in V\backslash\p$ and $v\in V$, onto the class $\ovl v$  of $v\in G$ in $G/G_{\preceq_2}$. Thus $\nu'_{|\K_G^\k}=\nu_{\preceq_2}$.\\
Now the ring $V/\p$ is a valuation ring with value group $G_{\preceq_2}$. We denote by $\ovl\nu$ its valuation. For an element $v\in V\backslash \p$, $\ovl\nu(v+\p)=\nu_{\preceq_1}(v)$. By construction we have
$$\K_{G_{\preceq_2}}^\k=\K_G^\k/(\p\cap \K_G^\k).$$
Hence, by Definition \ref{val} we have that $\ovl\nu=\nu_{\preceq_3}$.
\end{proof}


\subsection{The Zariski-Riemann space}
From now on we will consider the commutative case, that is, we only consider valuations defined on a field.
\begin{defi}\label{ZR0}
Let $\K$ be a field. Let $\nu$ be a valuation on $\K$, that is, a function $\nu:\K\lgw G\cup\{\infty\}$, where $G$ is a totally ordered abelian group, satisfying Definition \ref{def_val}. For such a valuation, we denote by $V_\nu$ its valuation ring, and by $\m_\nu$ its maximal ideal.\\
We define an equivalence relation on the set of such valuations: two valuations $\nu$ and $\mu$ are equivalent if $V_\nu=V_\mu$ or, equivalently, if there is a non zero constant $\la$ such that $\nu(x)=\la\mu(x)$ for every $x\in \K$. \\
The set of such valuations modulo this equivalence relation is called the Zariski-Riemann space of $\K$ and denote by $\ZR(\K)$.
\end{defi}

In some cases, it is useful to assume that the valuations are trivial on some base field. Therefore we have the following  relative version of the Zariski-Riemann space:

\begin{defi}\label{ZR}
Let $\K$  be a field and $\k$ be a subfield of $\K$.
The Zariski-Riemann space of $\K$ modulo $\k$ is the subset of $\ZR(\K)$ of the valuations $\nu$ such that $\nu_{|_{\k}}\equiv 0$.
It is denoted by $\ZR(\K/\k)$.

\end{defi}

\subsection{Topologies on the Zariski-Riemann space}

\begin{defi}
 We define the \emph{Zariski topology} on $\ZR(\K)$ to be the topology generated  by the sets 
$$\O(x):=\{\nu\in\ZR(\K)\mid  \nu(x)\geq 0\}$$
 where $x$ runs over $\K$.
\end{defi}

\begin{thm}\cite[Th\'eor\`eme 7.1]{Va}\label{clos_zariski}
For every $\nu\in\ZR(\K)$ we have
$$\ovl{\{\nu\}}^Z=\{\mu\in\ZR(\K)\mid \mu \text{ is composed with }\nu\}.$$
\end{thm}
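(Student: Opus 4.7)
The plan is to mirror the proof of Proposition \ref{spe_ord} from the preorder setting. The key observation is that the Zariski topology on $\ZR(\K)$ is generated by the basic opens $\O(x) = \{\nu \mid x \in V_\nu\}$, so by exactly the same argument as in the preorder case one obtains
$$\mu \in \ovl{\{\nu\}}^{Z} \iff V_\mu \subset V_\nu.$$
Indeed, for the forward direction, if $\mu$ lies in the closure and $x \in V_\mu$, then $\mu \in \O(x)$; hence $\O(x)$ is an open neighborhood of $\mu$ and must contain $\nu$, giving $x \in V_\nu$. Conversely, if $V_\mu \subset V_\nu$, then every basic open $\bigcap_{i=1}^k \O(x_i)$ containing $\mu$ satisfies $x_i \in V_\mu \subset V_\nu$ for all $i$, so it also contains $\nu$, and since the $\bigcap \O(x_i)$ form a basis of open neighborhoods this shows $\mu \in \ovl{\{\nu\}}^Z$.

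The remaining step is the standard valuation-theoretic identification
$$V_\mu \subset V_\nu \iff \mu \text{ is composed with }\nu.$$
If $\mu = \nu \circ \ovl\mu$ in the sense of the definition of composition recalled just before Proposition \ref{ca se compose}, then $V_\mu$ is, by construction, the preimage under $V_\nu \to V_\nu/\m_\nu$ of the valuation ring $V_{\ovl\mu}$, so $V_\mu \subset V_\nu$. For the converse, starting from $V_\mu \subset V_\nu$ (both having $\K$ as fraction field), one sets $\p := \m_\nu \cap V_\mu$; since $V_\nu$ is a valuation overring of $V_\mu$, the classical structure of overrings of valuation rings gives that $\p$ is a prime ideal of $V_\mu$, that $(V_\mu)_\p = V_\nu$, and that $V_\mu/\p$ is a valuation ring of the residue field $V_\nu/\m_\nu$; the associated valuation $\ovl\mu$ then satisfies $\mu = \nu \circ \ovl\mu$.

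Combining the two equivalences yields the stated description of $\ovl{\{\nu\}}^Z$. I do not expect a substantive obstacle: the argument is a direct translation of Proposition \ref{spe_ord}, and the identification of the composition relation with the containment $V_\mu \subset V_\nu$ is classical (it is exactly what is recalled before Proposition \ref{ca se compose} and is the content of \cite[Th\'eor\`eme 7.1]{Va} that the paper cites). The only mild subtlety to handle cleanly is the passage between the abstract equivalence class of a valuation and its valuation ring, which however is built into the definition of $\ZR(\K)$ given in Definition \ref{ZR0}.
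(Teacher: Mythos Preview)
Your proof is correct. Note that the paper does not actually supply a proof of this theorem: it is simply quoted from \cite[Th\'eor\`eme 7.1]{Va}. That said, your argument is exactly in the spirit of the paper's surrounding material: the first equivalence $\mu\in\ovl{\{\nu\}}^Z\iff V_\mu\subset V_\nu$ is the direct valuation-theoretic analogue of Proposition \ref{spe_ord}, and the second equivalence $V_\mu\subset V_\nu\iff \mu$ is composed with $\nu$ is precisely what the paper records (with the same localization argument) in Lemma \ref{comp_val}. So your proof is both correct and the natural one in this context.
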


\begin{defi}
Let $x\in\K$. We define
$$\U(x):=\{\nu\in\ZR(\K)\mid  \nu(x)> 0\}.$$
The \emph{Inverse topology} on $\ZR(\K)$ is the topology generated by the $\U(x)$ where $x$ runs over the elements of $\K$.
\end{defi}

\begin{defi}
We call \emph{Patch Topology} on $\ZR(\K)$ the topology generated by the sets $\O(x)$ and $\U(x)$ where $x$ runs over $\K$.
\end{defi}

\begin{defi}
Let $\K$  be a field and $\k$ be a subfield of $\K$.
For $x\in\K$ we set 
$$\mathcal{V}(x)=\{\nu\in\ZR(\K/\k)\mid \exists a\in\k,\ \nu(x+a)>0\}.$$
The \emph{Weak Inverse  topology} on $\ZR(\K/\k)$ is the topology generated by the $\mathcal{V}(x)$ where $x$ runs over $\K$. 
\end{defi}

\begin{prop}\label{refi_top}
The Inverse Topology  on $\ZR(\K/\k)$ is finer than the Weak Inverse topology. Both coincide when 
$\k$ is a finite field.
\end{prop}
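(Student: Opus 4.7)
The plan is to handle the two assertions separately. For the first, I will simply note the identity
$$\mathcal{V}(x) \;=\; \bigcup_{a \in \k} \U(x+a),$$
which realises every Weak-Inverse basic open as a union of Inverse basic opens; this is immediate from the definitions, so the Inverse topology is finer than the Weak Inverse topology.

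For the second assertion, I plan to show that when $\k$ is finite, every basic open $\U(y)$ of the Inverse topology is already open in the Weak Inverse topology. The cases $y \in \k$ are trivial ($\U(0) = \ZR(\K/\k)$, and $\U(y) = \emptyset$ for $y \in \k^*$ since $\nu_{|\k}\equiv 0$), so I may assume $y \in \K \setminus \k$; then $y+a$ is a nonzero element of $\K$ for every $a \in \k^*$, and the inverses $1/(y+a)$ are well defined. The heart of the argument will be the identity
$$\U(y) \;=\; \mathcal{V}(y) \;\cap\; \bigcap_{a \in \k^*} \mathcal{V}\!\left(1/(y+a)\right).$$
Because $\k$ is finite, the right-hand side is a finite intersection of Weak-Inverse basic opens, hence Weak-Inverse open, and this will complete the proof.

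To establish the identity I would first record the easy preliminary that $\nu \in \mathcal{V}(z)$ forces $\nu(z) \geq 0$: the witness $a$ gives either $\nu(z)>0$ (if $a=0$) or, since $\nu(a)=0<\nu(z+a)$, $\nu(z)=0$ (if $a \neq 0$). The inclusion $\U(y) \subseteq \mathcal{V}(y) \cap \bigcap_{a} \mathcal{V}(1/(y+a))$ is then quick: $\nu(y)>0$ yields $\nu \in \mathcal{V}(y)$ trivially, and for $a \in \k^*$ one computes $\nu(y+a)=0$ and $1/(y+a) - 1/a = -y/(a(y+a))$, whose valuation equals $\nu(y)>0$, placing $\nu$ in $\mathcal{V}(1/(y+a))$. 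The reverse inclusion is the decisive step: if $\nu$ lies in the intersection, then $\nu \in \mathcal{V}(y)$ provides some $b \in \k$ with $\nu(y+b)>0$; assuming $b \neq 0$, membership of $\nu$ in $\mathcal{V}(1/(y+b))$ would force $\nu(1/(y+b)) \geq 0$, i.e.\ $\nu(y+b) \leq 0$, contradicting $\nu(y+b)>0$. Hence $b=0$ and $\nu \in \U(y)$.

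The main obstacle is really just locating the auxiliary elements $1/(y+a)$ whose Weak-Inverse basic opens simultaneously contain $\U(y)$ and exclude the parasitic pieces $\U(y+a)$ appearing in the partition $\mathcal{V}(y) = \bigsqcup_{a \in \k} \U(y+a)$; the finiteness of $\k$ is essential to keep the intersection finite, and indeed a similar construction fails when $\k$ is infinite because the residue field of a valuation may contain elements algebraic over $\k$ that are not in $\k$ itself.
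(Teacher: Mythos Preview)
Your proof is correct and follows essentially the same route as the paper: both establish the identity $\U(y)=\V(y)\cap\bigcap_{a\in\k^*}\V\!\left((y+a)^{-1}\right)$ via the same forward and reverse inclusions, with your version being slightly more careful in treating the trivial cases $y\in\k$ explicitly. The only cosmetic difference is that the paper's reverse inclusion shows $\nu(y+a)=0$ for all $a\in\k^*$ and then infers $\nu(y)>0$, whereas you argue directly by contradiction on the witness $b$; these are the same argument.
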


\begin{proof}
We have $$\V(x)=\{\nu\in\ZR(\K/\k)\mid \exists a\in \k,\ \nu(x+a)>0\}=\bigcup\limits_{a \in \k}\U(x+a).$$
This proves the first claim.
\\
We have that
\begin{equation}\label{eq_topo}\U(x)=\V(x)\cap\bigcap_{a\in\k^*}\V((x+a)^{-1}).\end{equation}
Indeed, if $\nu(x)>0$ and $a\in\k^*$, we have
$$\nu\left(\frac{1}{x+a}-\frac{1}{a}\right)=\nu\left(\frac{-x}{x+a}\right)>0.$$
On the other hand, assume
$$\nu\in \V(x)\cap\bigcap_{a\in\k^*}\V((x+a)^{-1}).$$
Let $a\in\k^*$. Since $\nu \in \V((x+a)^{-1})$, $\nu((x+a)^{-1})\geq 0$. Since $\nu\in\V(x)$, $\nu(x+a)\geq 0$ and $\nu((x+a)^{-1})= 0$. In particular $\nu(x+a)=0$ for every $a\in\k^*$. But, because $\nu\in\V(x)$, we have $\nu(x)>0$. This proves \eqref{eq_topo} and the second claim.
\end{proof}

\begin{defi}
Let $\k\subset \K$ be two fields. Let $\L$ be a field. We denote by $\ZR(\K)^\L$ the subset of valuations of $\ZR(\K)$ whose residue field is $\L$. When $\k\subset \L$, we denote by $\ZR(\K/\k)^\L$ the subset of valuations of $\ZR(\K/\k)$ whose residue field is $\L$.
\end{defi}

\begin{prop}\label{refi_top2}
The Zariski and the Weak Inverse topologies coincide on  $\ZR(\K/\k)^\k$.\\
If $\k$ is a finite field, the Zariski and the Inverse topologies coincide on $\ZR(\K/\k)^\k$.
\end{prop}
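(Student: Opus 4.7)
The plan is to prove the first assertion by showing the stronger statement that the subbasic open sets of the two topologies actually coincide on $\ZR(\K/\k)^\k$, namely
\[
\O(x)\cap\ZR(\K/\k)^\k \;=\; \V(x)\cap\ZR(\K/\k)^\k \qquad \text{for every } x\in\K.
\]
Once this is established, the generated topologies on $\ZR(\K/\k)^\k$ agree.

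For the inclusion $\V(x)\subset\O(x)$ (which does not need the residue field hypothesis), I would fix $\nu\in\V(x)$ and choose $a\in\k$ with $\nu(x+a)>0$. If $a=0$, then $\nu(x)>0\geq 0$. If $a\neq 0$, then $\nu(-a)=0$ because $\nu_{|\k}\equiv 0$, and the values $\nu(x+a)$ and $\nu(-a)$ are distinct, so the standard ultrametric identity gives $\nu(x)=\nu((x+a)+(-a))=\min\{\nu(x+a),0\}=0$. Either way, $\nu\in\O(x)$.

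For the reverse inclusion $\O(x)\cap\ZR(\K/\k)^\k\subset \V(x)$, I would use the assumption that the residue field $\k_\nu$ equals $\k$. Given $\nu\in\O(x)\cap\ZR(\K/\k)^\k$, the element $x$ lies in $V_\nu$, so it has an image $\bar x\in V_\nu/\m_\nu=\k$. Taking $a\in\k$ to be this image, we have $x-a\in\m_\nu$, i.e.\ $\nu(x+(-a))>0$, which is precisely the condition $\nu\in\V(x)$. This completes the proof of the first assertion.

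The second assertion then follows immediately: when $\k$ is finite, Proposition \ref{refi_top} tells us that the Inverse and Weak Inverse topologies coincide on $\ZR(\K/\k)$, hence also on the subspace $\ZR(\K/\k)^\k$, and combining this with the first assertion gives the coincidence of the Zariski and Inverse topologies on $\ZR(\K/\k)^\k$. There is no real obstacle here; the content of the argument is the observation that the residue field being equal to $\k$ forces every element of $V_\nu$ to be congruent modulo $\m_\nu$ to a constant in $\k$, which is exactly what is needed to pass from $\O(x)$ to $\V(x)$.
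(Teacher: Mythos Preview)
Your proof is correct and follows essentially the same approach as the paper: both establish the equality $\O(x)\cap\ZR(\K/\k)^\k=\V(x)\cap\ZR(\K/\k)^\k$ for every $x\in\K$ by using the residue field hypothesis in one direction and the ultrametric inequality in the other, and then invoke Proposition~\ref{refi_top} for the finite-field case. Your version is slightly more explicit in the ultrametric step (splitting into $a=0$ and $a\neq 0$ and noting the strict inequality forces $\nu(x)=0$), but the argument is the same.
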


\begin{proof}
 Let $x\in\K$.
Let $\nu\in\O(x)\cap\ZR(\K/\k)^\k$, that is,  $\nu(x)\geq 0$.  Since the residue field of $\nu$ is $\k$,  there is $a\in \k$ such that $\nu(x+a)>0$. Therefore $\nu\in\V(x)$.
On the other hand, if $\nu\in \V(x)$, then there is $a\in\k$ such that $\nu(x+a)> 0$. We have that $\nu(a)=0$ or $\infty$, hence $\nu(x)\geq\min\{\nu(x+a),\nu(a)\}\geq 0$, and $\nu\in\O(x)$. This proves that
$$\O(x)\cap \ZR(\K/\k)^\k=\V(x)\cap\ZR(\K/\k)^\k.$$
This proves the first claim. The second claim comes from Proposition \ref{refi_top}.
\end{proof}

\begin{lem}
Let $\k\subset \K$ be two field. Then $\ZR(\K/\k)$ is closed in $\ZR(\K)$ for the Inverse and the Patch topologies.
\end{lem}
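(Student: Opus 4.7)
The plan is to exhibit $\ZR(\K/\k)$ as an intersection of sets that are closed in the Inverse topology; since the Patch topology is finer than the Inverse topology, any Inverse-closed set is automatically Patch-closed, and the two conclusions will follow at once.

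First, I would rewrite the definition in terms of the basic sets $\O(x)$. A valuation $\nu \in \ZR(\K)$ lies in $\ZR(\K/\k)$ exactly when $\nu(a) = 0$ for every $a \in \k^*$. Since $\k^*$ is closed under taking inverses, the condition $\nu(a) \geq 0$ for all $a \in \k^*$ already forces $\nu(a) = 0$ for all $a \in \k^*$ (applying it to both $a$ and $a^{-1}$). Hence
\[
\ZR(\K/\k) \;=\; \bigcap_{a \in \k^*} \O(a).
\]

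Next I would observe that each $\O(a)$ is closed in the Inverse topology. Indeed, for $a \in \k^*$ we have $a^{-1} \in \K^*$, and directly from the definitions
\[
\O(a)^c \;=\; \{\nu \in \ZR(\K) \mid \nu(a) < 0\} \;=\; \{\nu \in \ZR(\K) \mid \nu(a^{-1}) > 0\} \;=\; \U(a^{-1}),
\]
which is a basic open set of the Inverse topology. Hence $\O(a)$ is Inverse-closed.

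Finally, $\ZR(\K/\k)$ is an intersection of Inverse-closed sets, so it is itself Inverse-closed. Because every Inverse-open set is by definition Patch-open, every Inverse-closed set is Patch-closed, so the same intersection is closed for the Patch topology as well. There is no real obstacle here: the whole argument is a one-line manipulation using the duality $\O(a)^c = \U(a^{-1})$ together with the stability of $\k^*$ under inversion.
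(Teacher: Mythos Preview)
Your proof is correct and follows essentially the same approach as the paper, which simply writes $\ZR(\K/\k)=\bigcap_{x\in\k}(\O(x)\cap\O(x^{-1}))$ and leaves the rest implicit. Your version is in fact slightly more careful: you restrict to $x\in\k^*$ (avoiding the meaningless $\O(0^{-1})$) and spell out the key point $\O(a)^c=\U(a^{-1})$ that makes each $\O(a)$ Inverse-closed.
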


\begin{proof}Indeed
$$\ZR(\K/\k)=\bigcap_{x\in\k}(\O(x)\cap\O(x^{-1})).$$
\end{proof}

\subsection{Compacity of the Zariski-Riemann space}

We mention here the following analogue of Theorem \ref{compacity_preorders}. Its proof is completely  similar to the proof of Theorem \ref{compacity_preorders}. Indeed the proof of Theorem \ref{compacity_preorders} is based on the original proof of the following result:

\begin{thm}\cite{SZ}
The spaces $\ZR(\K)$ and $\ZR(\K/\k)$ are  compact for the Zariski, the Inverse and the Patch topologies.
\end{thm}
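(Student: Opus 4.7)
I would follow the template of Theorem~\ref{compacity_preorders} by embedding $\ZR(\K)$ into a compact product space and identifying its image as a closed subset. For each $\nu \in \ZR(\K)$, define $\chi_\nu \in \{0,1\}^\K$ by $\chi_\nu(x) = 1$ if $x \in V_\nu$ and $\chi_\nu(x) = 0$ otherwise. Since by Definition~\ref{ZR0} an equivalence class of valuations in $\ZR(\K)$ is determined by its valuation ring, this defines an injection $\nu \mapsto \chi_\nu$. Equipping $\{0,1\}$ with the discrete topology, Tychonoff's theorem makes $\{0,1\}^\K$ compact; it then remains to check (a) that the induced subspace topology on $\ZR(\K)$ is the Patch topology, and (b) that the image of the embedding is closed.

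For (a), the subbasic opens for the product topology are the preimages $\pi_x^{-1}(\{1\})$ and $\pi_x^{-1}(\{0\})$ under the coordinate projections. Under our embedding, these correspond respectively to $\{\nu : x \in V_\nu\} = \O(x)$ and, for $x \neq 0$, to $\{\nu : \nu(x^{-1}) > 0\} = \U(x^{-1})$, so together they generate exactly the Patch topology. For (b), a function $\chi \in \{0,1\}^\K$ equals $\chi_\nu$ for some $\nu$ if and only if $V_\chi := \chi^{-1}(\{1\})$ is a valuation ring of $\K$, which amounts to the conditions
\begin{enumerate}
\item $\chi(0) = \chi(1) = 1$;
\item for all $x, y \in \K$: either $\chi(x) = 0$, or $\chi(y) = 0$, or $\chi(x+y) = \chi(xy) = 1$;
\item for all $x \in \K^*$: $\chi(x) = 1$ or $\chi(x^{-1}) = 1$.
\end{enumerate}
Each of these conditions involves only finitely many coordinates, hence defines a closed subset of $\{0,1\}^\K$, and their intersection (the image of $\ZR(\K)$) is closed. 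Thus $\ZR(\K)$ is Patch-compact, and since the Patch topology refines both the Zariski and Inverse topologies, $\ZR(\K)$ is compact for all three.

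For the relative version $\ZR(\K/\k)$, one further imposes the closed condition $\chi(a) = \chi(a^{-1}) = 1$ for every $a \in \k^*$, which encodes the triviality of $\nu$ on $\k$; equivalently, we have already observed that $\ZR(\K/\k)$ is closed in $\ZR(\K)$ for the Patch topology. The main subtlety compared with the preorder case is that we encode $\nu$ by the characteristic function of its valuation ring rather than by a ternary sign function: additivity of the valuation cannot be detected from the signs of $\nu(x)$ and $\nu(y)$ alone, but using $\chi_\nu$ sidesteps this issue because every ring axiom defining a valuation ring is a purely local (finite-coordinate) constraint, exactly as in the proof of Theorem~\ref{compacity_preorders}.
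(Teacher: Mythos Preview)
Your proof is correct and follows essentially the same approach the paper intends: the paper does not write out a proof but cites \cite{SZ} and states that it is ``completely similar'' to Theorem~\ref{compacity_preorders}, which is precisely the Tychonoff-plus-closed-image argument you carry out. Your use of the characteristic function $\chi_\nu\in\{0,1\}^\K$ of the valuation ring, rather than a ternary sign map as in the preorder proof, is the natural adaptation (and you correctly explain why), and your closedness conditions recover exactly the valuation-ring axioms; note that closure under negation follows since condition~(3) forces $-1\in V_\chi$.
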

\subsection{Order on the Zariski-Riemann space}

Therefore we can do exactly as for $\ZR(G)$:

\begin{defi}
Let $\nu_1: \K\lgw G_1$ and $\nu_2:\K\lgw G_2$ be two valuations of $\ZR(\K)$. We say that
$$\nu_1\LEQ\nu_2$$
if there is a valuation $\nu_3$ such that 
$$\nu_2=\nu_1\circ\nu_3.$$
Given two valuations $\nu$ and $\mu\in\ZR(\K)$ we say that $\nu$ and $\mu$ are \emph{comparable} if $\nu\LEQ\mu$ or $\mu\LEQ\nu$. Otherwise we say that they are \emph{incomparable}.
\end{defi}

\begin{lem}\label{comp_val}
Given $\nu$, $\mu\in\ZR(\K)$, the following are equivalent:
\begin{itemize}
\item[i)] $\mu\LEQ \nu$,
\item[ii)] $V_\nu\subset V_\mu$,
\item[iii)] $\m_\mu\subset\m_\nu$.
\end{itemize}
\end{lem}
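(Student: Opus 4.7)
The plan is to establish the cycle $(\mathrm{i}) \Rightarrow (\mathrm{ii}) \Rightarrow (\mathrm{iii}) \Rightarrow (\mathrm{ii}) \Rightarrow (\mathrm{i})$, with the first three implications being elementary and the last one relying on the classical correspondence between valuation overrings of $V_\nu$ and isolated subgroups of the value group $\G_\nu$.

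For $(\mathrm{i}) \Rightarrow (\mathrm{ii})$, suppose $\nu = \mu \circ \nu_3$ for some valuation $\nu_3$. Unwinding the construction of the composition recalled just before Proposition~\ref{ca se compose}, there is an isolated subgroup $H$ of $\G_\nu$ and a two-sided prime ideal $\p$ of $V_\nu$ such that $V_\mu = (V_\nu)_\p$. Since a localization of $V_\nu$ at a prime always contains $V_\nu$, we obtain $V_\nu \subset V_\mu$.

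For $(\mathrm{ii}) \Leftrightarrow (\mathrm{iii})$, I would argue directly. Assuming $V_\nu \subset V_\mu$, pick $x \in \m_\mu$, so $\mu(x) > 0$; if $\nu(x) \leq 0$ then $x^{-1} \in V_\nu \subset V_\mu$, forcing $\mu(x) \leq 0$, a contradiction, so $x \in \m_\nu$. Conversely, assuming $\m_\mu \subset \m_\nu$, pick $x \in V_\nu$ and suppose for contradiction that $x \notin V_\mu$; then $x^{-1} \in \m_\mu \subset \m_\nu$, giving $\nu(x) < 0$ and contradicting $x \in V_\nu$.

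The main step is $(\mathrm{ii}) \Rightarrow (\mathrm{i})$, and the main obstacle is invoking the standard valuation-theoretic fact (see e.g.~\cite{Va}) that when $V_\nu \subset V_\mu$ are two valuation rings of the same field, there exists an isolated subgroup $H$ of $\G_\nu$ such that $V_\mu$ is the localization of $V_\nu$ at the two-sided prime $\p = \nu^{-1}(\G_\nu \setminus H)$, and the induced quotient valuation on $V_\mu$ with value group $\G_\nu/H$ agrees with $\mu$ up to equivalence. Letting $\nu_3$ be the valuation on the residue field $V_\mu/\m_\mu$ (of value group $H$) defined by $\nu_3(\overline{x}) = \nu(x)$ for $x \in V_\mu \setminus \m_\mu$, one obtains $\nu = \mu \circ \nu_3$, hence $\mu \LEQ \nu$.
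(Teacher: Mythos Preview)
Your proof is correct and follows essentially the same approach as the paper. The paper organizes it as $(\mathrm{i})\Leftrightarrow(\mathrm{ii})$ and $(\mathrm{ii})\Leftrightarrow(\mathrm{iii})$; for $(\mathrm{ii})\Rightarrow(\mathrm{i})$ it invokes \cite[Proposition 3.3]{Va} directly (observing that $\m_\mu$ is a prime of $V_\nu$ and $V_\mu=(V_\nu)_{\m_\mu}$) rather than passing through the isolated subgroup $H$, and for $(\mathrm{iii})\Rightarrow(\mathrm{ii})$ it phrases your contrapositive as $(\m_\mu\setminus\{0\})^{-1}\subset(\m_\nu\setminus\{0\})^{-1}$, i.e.\ $\K\setminus V_\mu\subset\K\setminus V_\nu$---but these are the same arguments in slightly different clothing.
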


\begin{proof}
We have $\mu\LEQ \nu$ if and only if $V_\mu$ is a localization of $V_\nu$ at a prime ideal of $V_\nu$. In particular we have $V_\nu\subset V_\mu$.\\
On the other hand, if $V_\nu\subset V_\mu$, the maximal ideal of $V_\mu$, denoted by $\m_\mu$, is a prime ideal of $V_\nu$, and $V_\mu$ is the localization of $V_\nu$ at $\m_\mu$ (see \cite[Proposition 3.3]{Va}). This proves the equivalence of i) and ii).\\
Now if $V_\nu\subset V_\mu$ then $\m_\mu\subset\m_\nu$. On the other hand, if $\m_\mu\subset\m_\nu$, then 
$(\m_\mu\setminus\{0\})^{-1}\subset(\m_\nu\setminus\{0\})^{-1}.$
Thus $V_\nu\subset V_\mu$. This proves the equivalence of ii) and iii).
\end{proof}

\begin{ex}\label{embedding_Zn}
Let $\k$ be a field and $\K=\k(x_1,\ldots, x_n)$ where the $x_i$ are algebraically independent over  $\k$.
Definition \ref{val} shows that there is an injective map 
$$\ZR(\Z^n)\lgw \ZR(\K/\k),$$ whose image is the set of monomial valuations in the coordinates $x_1$, \ldots, $x_n$. It is straightforward to check that this map is continuous for the Zariski, Inverse or Patch topology (when the same topology is considered on both sides), and that this is an increasing map (by Proposition \ref{ca se compose}). Therefore, any choice of generators $x_1$, \ldots, $x_n$ of $\K$ over $\k$ defines such an embedding. 
\end{ex}

\begin{lem}\label{incomparable_val}
Let $\nu$, $\mu\in\ZR(\K)$ be incomparable. Then there exists $f\in \K$ such that $\nu(f)<0$ and $0<\mu(f)$.
\end{lem}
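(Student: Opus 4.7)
The plan is to reduce everything to the characterization of the order on $\ZR(\K)$ via inclusions of valuation rings given by Lemma~\ref{comp_val}, and then exploit the fact that, unlike in the monoid setting of Lemma~\ref{lem2}, we are working inside a field, so we can freely take inverses and products. This will make the argument considerably shorter than its preorder analogue.

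First I would translate the incomparability hypothesis. By Lemma~\ref{comp_val}, $\mu\LEQ\nu$ is equivalent to $V_\nu\subset V_\mu$, and $\nu\LEQ\mu$ is equivalent to $V_\mu\subset V_\nu$. Since $\nu$ and $\mu$ are incomparable, neither inclusion holds, so one can choose
\[
g\in V_\nu\setminus V_\mu,\qquad h\in V_\mu\setminus V_\nu.
\]
Both $g$ and $h$ are nonzero (otherwise their valuation would be $\infty$ under both $\nu$ and $\mu$), and by definition of the valuation rings we have $\nu(g)\geq 0$, $\mu(g)<0$, $\mu(h)\geq 0$, $\nu(h)<0$.

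Next I would set $f:=h g^{-1}\in\K$ and use the multiplicativity axiom (ii) of Definition~\ref{def_val} to compute
\[
\nu(f)=\nu(h)-\nu(g)\leq \nu(h)<0,
\]
since $-\nu(g)\leq 0$, and
\[
\mu(f)=\mu(h)-\mu(g)\geq -\mu(g)>0,
\]
since $\mu(h)\geq 0$ and $-\mu(g)>0$. This gives the desired $f$.

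I do not expect any genuine obstacle: the only subtle point is to make sure the strict signs work out, which they do because in each inequality exactly one of the two summands is strictly signed while the other is merely non-strict in the correct direction. No use of the triangle inequality (axiom iii) is needed; the result is purely an algebraic consequence of Lemma~\ref{comp_val} together with the group-homomorphism property of a valuation on~$\K^{*}$.
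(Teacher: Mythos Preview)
Your proof is correct and essentially identical to the paper's own argument: the paper also picks $u\in V_\mu\setminus V_\nu$ and $v\in V_\nu\setminus V_\mu$ via Lemma~\ref{comp_val} and sets $f=u/v$, leaving the sign verification implicit. Your write-up simply spells out the computation that the paper summarizes with ``and the claim is proved.''
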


\begin{proof}
By Lemma \ref{comp_val}, $\nu$ and $\mu$ are incomparable if and only if there is $u\in V_\mu\setminus V_\nu$ and $v\in V_\nu\setminus V_\mu$. Therefore we set $f=u/v$ and the claim is proved.
\end{proof}

\begin{rem}
If $\nu_1\LEQ \nu_2$ we have that $G_1$ is the quotient of $G_2$ by a subgroup that is $\nu_2$-isolated.
\end{rem}

\begin{lem}\label{lem_borne_inf}
Let $E\subset\ZR(\K)$ be non empty. The set
$$R_E:=\left\{R \text{ subring of }\K\mid \bigcup\limits_{\nu\in E} V_\nu\subset R\right\}$$
is non empty and contains a minimal element. This minimal element is a valuation ring, and its associated valuation is denoted by $\nu_{\inf E}$. If $E\subset \ZR(\K/\k)$ then $\nu_{\inf E}\in\ZR(\K/\k)$.

\end{lem}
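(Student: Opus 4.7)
The approach is to mimic verbatim the proof of the corresponding statement for preorders on a group. I set
\[
V:=\bigcap_{R\in R_E}R.
\]
The set $R_E$ is nonempty because $\K\in R_E$. An arbitrary intersection of subrings of $\K$ is a subring, so $V$ is a subring of $\K$. Since each $V_\nu$ is contained in every $R\in R_E$, we get $V_\nu\subset V$ for every $\nu\in E$, so $V\in R_E$, and by construction it is the minimal element.

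The crucial step is to show that $V$ is a valuation ring of $\K$, i.e. that for every $x\in\K^*$, either $x\in V$ or $x^{-1}\in V$. Fix some $\nu_0\in E$ (possible since $E\neq\emptyset$). For any $x\in\K^*$, since $V_{\nu_0}$ is a valuation ring, we have $x\in V_{\nu_0}$ or $x^{-1}\in V_{\nu_0}$, and because $V_{\nu_0}\subset V$ this gives $x\in V$ or $x^{-1}\in V$. Hence $V$ is a valuation ring, and as recalled after Definition~\ref{def_val} it corresponds to a valuation $\nu_{\inf E}\in\ZR(\K)$ (unique up to the equivalence of Definition~\ref{ZR0}) with $V=V_{\nu_{\inf E}}$.

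Finally, for the relative statement, suppose $E\subset\ZR(\K/\k)$. Then for every $\nu\in E$ and every $a\in\k^*$ we have $\nu(a)=0$, so both $a$ and $a^{-1}$ belong to $V_\nu$, hence to $V$. Therefore every element of $\k^*$ is a unit of $V$, which means $\nu_{\inf E}$ is trivial on $\k$, i.e.\ $\nu_{\inf E}\in\ZR(\K/\k)$. There is essentially no hard step here: what might have looked delicate is avoided by the observation that the valuation--ring dichotomy $x\in V$ or $x^{-1}\in V$ is inherited from any single $V_{\nu_0}\subset V$, so no Zorn--type argument on chains of subrings is needed.
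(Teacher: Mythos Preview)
Your proof is correct and follows the same approach as the paper: define $V$ as the intersection of all rings in $R_E$, observe $V_{\nu_0}\subset V$ for some $\nu_0\in E$, and deduce that $V$ is a valuation ring. In fact you spell out more than the paper does---the paper's proof is a single sentence that leaves implicit both the valuation--ring dichotomy argument and the relative case $E\subset\ZR(\K/\k)$, while you make both explicit.
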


\begin{proof}
The set $E$ is non empty since $\K\in E$. We set $V:=\bigcap\limits_{R\in R_E}R$. Then $V$ is a valuation ring since, for at least one $\nu\in E$, we have $V_\nu\subset V\subset \K$. This proves the lemma.
\end{proof}

\begin{prop}\label{jsl_val}
Let $E\subset\ZR(\K)$ be non empty, and let 
$\nu\in \ZR(\K)$. We have  $$[\forall \mu\in E,\ \nu\LEQ\mu ]\Longleftrightarrow \nu\LEQ\nu_{\inf E}.$$
In particular $\ZR(\K)$ is a join-semilattice, i.e. a partially ordered set in which all subsets have  an infimum.\\
 Moreover, for every $\nu\in\ZR(\K)$, the set
$\{\mu\in\ZR(\K)\mid \mu\LEQ \nu\}$
is totally ordered.\\
The same remains valid if we replace $\ZR(\K)$ by $\ZR(\K/\k)$.
\end{prop}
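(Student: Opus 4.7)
The plan is to mirror the proof strategy used for preorders on groups in Lemma \ref{min et intersection} and Theorem \ref{cor_raf_toset}, since the relevant structural lemmas (\ref{comp_val}, \ref{lem_borne_inf}, \ref{incomparable_val}) have already been set up in exactly parallel form.

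First I would establish the equivalence via the ring-theoretic characterization of $\LEQ$. Fix $\nu\in\ZR(\K)$. By Lemma \ref{comp_val}, the condition $\nu\LEQ\mu$ for every $\mu\in E$ is equivalent to $V_\mu\subset V_\nu$ for every $\mu\in E$, which is equivalent to $\bigcup_{\mu\in E} V_\mu\subset V_\nu$. But this last inclusion says exactly that $V_\nu\in R_E$; by the minimality of $V_{\nu_{\inf E}}$ inside $R_E$ (Lemma \ref{lem_borne_inf}) this is equivalent to $V_{\nu_{\inf E}}\subset V_\nu$, hence, again by Lemma \ref{comp_val}, to $\nu\LEQ\nu_{\inf E}$. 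This shows that $\nu_{\inf E}$ is a greatest lower bound of $E$, so $\ZR(\K)$ is a join-semilattice in the sense used in the paper.

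Next I would prove the totality of $\{\mu\in\ZR(\K)\mid \mu\LEQ\nu\}$ by contradiction, reusing the incomparability criterion. Suppose $\mu_1,\mu_2\LEQ\nu$ were incomparable. By Lemma \ref{incomparable_val} there exists $f\in\K$ with $\mu_1(f)<0$ and $\mu_2(f)>0$. From $\mu_1\LEQ\nu$ and Lemma \ref{comp_val} we have $V_\nu\subset V_{\mu_1}$, so $f\notin V_{\mu_1}$ forces $f\notin V_\nu$, i.e.\ $\nu(f)<0$. From $\mu_2\LEQ\nu$ we have $\m_{\mu_2}\subset\m_\nu$, so $f\in\m_{\mu_2}$ forces $f\in\m_\nu$, i.e.\ $\nu(f)>0$, a contradiction.

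For the relative version the same arguments apply verbatim: Lemma \ref{lem_borne_inf} already guarantees that $\nu_{\inf E}\in\ZR(\K/\k)$ when $E\subset\ZR(\K/\k)$, and Lemmas \ref{comp_val} and \ref{incomparable_val} are stated at the level of $\ZR(\K)$ but restrict to any subset closed under $\inf$. No step of the argument presents a serious obstacle; the only point requiring mild care is to juggle the three equivalent descriptions of $\LEQ$ (in terms of composition, of inclusion of valuation rings, and of inclusion of maximal ideals) without circularity, but this is precisely what Lemma \ref{comp_val} was set up to allow.
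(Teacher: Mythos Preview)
Your proof is correct and follows essentially the same route as the paper: the equivalence is obtained via Lemma \ref{comp_val} and the minimality statement in Lemma \ref{lem_borne_inf}, and the total ordering of $\{\mu\LEQ\nu\}$ is derived from Lemma \ref{incomparable_val} exactly as in Theorem \ref{cor_raf_toset}. Your write-up is simply more explicit where the paper compresses the chain of equivalences into one line and defers the second claim to the earlier argument.
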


\begin{proof}
Indeed,  by Lemmas \ref{comp_val} and \ref{lem_borne_inf}, we have
$$ \nu\LEQ\nu_{\inf E} \Longleftrightarrow V_{\nu_{\inf E}}\subset V_\nu\Longleftrightarrow \left[\forall \mu\in E,\   V_\mu\subset V_\nu\right]\Longleftrightarrow [\forall \mu\in E,\ \nu\LEQ\mu ].$$
This proves the first claim. The second claim comes from Lemma \ref{incomparable_val} exactly as in the proof of Theorem \ref{cor_raf_toset}.\end{proof}


\begin{defi}
Let $\K$ be a field and $\nu\in\ZR(\K)$. The rank of $\nu$ is the rank of its value group (that is, the ordinal type of the totally ordered set of its proper isolated subgroups). It is denoted by $\rk(\nu)$.\\
 The degree of $\nu$ is the transcendence degree of $\k_\nu$ over its prime field.\\
When $\k$ is a subfield of $\K$, the (transcendence) degree of $\nu$ is the transcendence degree of $\k\lgw \k_\nu$, and is denoted by $\tdr \nu$.
\end{defi}

\begin{rem}\label{Abhyankar}
Let $\k\lgw \K$ be a field extension of finite transcendental degree. Let $\nu\in \ZR(\K/\k)$ with value group $G$. Then we have
$$\rk(\nu)+\tdr\nu\leq \rr(\nu)+\tdr\nu\leq \tdr(\K)$$
by \cite[Corollary to Theorem 1.20]{Va}. In particular $G$ can be embedded in $\Q^{\tdr(\K)}$. 
\end{rem}

\begin{rem}
Let   $\nu\in\ZR(\K/\k)$. Then $\rk(\nu)>1$ if and only if there exists a non trivial valuation $\nu'\in\ZR(\K/\k)$ such that $\nu'\SEQ\nu$. Therefore the rank one valuations are the minimal valuations $\nu$ such that $\nu_\emptyset\SEQ\nu$.\\
More generally,  $\rk(\nu)$ corresponds to the ordinal type of the maximal chain of valuations between the trivial valuation and $\nu$. Therefore this is the natural analogue of the rank of a preorder.
\end{rem}

\begin{rem}
Let $\nu\in\ZR(\K/\k)$. For any $\ovl\nu\in\ZR(\k_\nu/\k)$, the composition $\nu\circ\ovl\nu$ is well defined. If $\ovl\nu$ is the trivial valuation then $\nu=\nu\circ\ovl\nu$. \\
On the other hand, if $\nu=\nu\circ\ovl\nu$, then $\ovl\nu$ is the trivial valuation. Therefore $\nu\in\ZR(\K/\k)$ is a maximal element if and only if $\ZR(\k_\nu/\k)$ contains only the trivial valuation. And this is the case only if $\k\lgw\k_\nu$ is algebraic.\\
Therefore the maximal elements of $\ZR(\K/\k)$ are the valuations $\nu$ such that $\k\lgw\k_\nu$ is algebraic, that is the valuations of degree 0.\\
More generally, $\tdr\nu$ corresponds to the ordinal type of the maximal chain of valuations between $\nu$ and a valuation $\nu'$ with $\tdr\nu'=0$. Therefore $\tdr\nu$ is the natural analogue of the degree of a preorder.
\end{rem}

\begin{cor}
Let $\k\lgw \K$ be a field extension of finite transcendental degree. Then $\ZR(\K/\k)$ is a rooted graph where the vertices are the valuations on $\ZR(\K/\k)$, the root is the trivial valuation, and for every pair of valuations $(\nu,\mu)$, there is an edge between $\nu$ and $\mu$ if $\nu$ and $\mu$ are comparable and there is no other valuation between them (with respect to the order on $\ZR(\K/\k)$).
\end{cor}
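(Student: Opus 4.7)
My plan is to apply Proposition \ref{tree} directly to $\ZR(\K/\k)$, after verifying its two hypotheses in the valuation setting. The order structure is already available: Proposition \ref{jsl_val} shows that $\ZR(\K/\k)$ is a join-semilattice in which the set $\{\mu\mid \mu\LEQ \nu\}$ is totally ordered for every $\nu$, and the trivial valuation $\nu_\emptyset$, whose valuation ring is $\K$ itself, will serve as the root since it is the unique minimum.

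The essential input from the finiteness hypothesis is Remark \ref{Abhyankar}, which gives $\rk(\nu)\leq \tdr(\K) < \infty$ for every $\nu\in\ZR(\K/\k)$. Combined with the classical order-reversing bijection between valuations $\mu$ with $\mu\LEQ\nu$ and isolated subgroups of the value group $G_\nu$ (cf.\ \cite[Proposition 4.1]{Va}), this means that the chain of such $\mu$ is finite, of length exactly $\rk(\nu)+1$. Triviality on $\k$ is automatically preserved under composition and its inverse, so this bijection restricts to $\ZR(\K/\k)$. Hypothesis i) of Proposition \ref{tree} follows at once: for a non-trivial $\nu$, the valuations strictly below $\nu$ correspond to the non-zero isolated subgroups of $G_\nu$, a non-empty finite chain, whose minimum element yields a maximal $\mu\SEQ\nu$.

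For hypothesis ii), given $\nu_1\SEQ\nu_2$ I would write $\nu_2=\nu_1\circ\ovl\nu$ with $\ovl\nu$ a non-trivial element of $\ZR(\k_{\nu_1}/\k)$; the valuations $\mu$ satisfying $\nu_1\SEQ\mu\LEQ\nu_2$ correspond to the isolated subgroups of $G_{\ovl\nu}$ strictly smaller than $G_{\ovl\nu}$, again a non-empty finite chain (since $\rk(\ovl\nu)\leq\rk(\nu_2)<\infty$), and its smallest non-zero element produces the required minimal $\mu$. With both hypotheses of Proposition \ref{tree} verified, its argument transfers verbatim: for any two valuations one reduces to the totally ordered case by taking the meet $\nu_1\wedge\nu_2$ provided by Proposition \ref{jsl_val}, and uniqueness of the connecting path follows from the minimality arguments in that proof. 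The only subtle point, and the main obstacle one should guard against, is checking that all the isolated subgroups appearing along the way parameterize valuations still trivial on $\k$; once that is confirmed, the rooted tree structure with root $\nu_\emptyset$ is immediate.
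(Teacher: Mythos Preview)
Your approach is correct and essentially coincides with the paper's: the paper simply says the result follows from Remark \ref{Abhyankar}, the two remarks after it, and Proposition \ref{jsl_val}, by re-running the proof of Proposition \ref{tree}. You have made explicit exactly what that entails---the finite-rank bound from Remark \ref{Abhyankar} forces the chain $\{\mu\mid\mu\LEQ\nu\}$ to be finite, which immediately yields both hypotheses i) and ii) of Proposition \ref{tree} (indeed, a finite non-empty totally ordered set has both a maximum and a minimum, so your detour through $\ovl\nu$ for hypothesis ii) is not even needed).
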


\begin{proof}
This comes directly from the last three remarks and Proposition \ref{jsl_val}, following the same proof as the one of Proposition \ref{tree}.
\end{proof}

\begin{rem}
We can make the similar reasoning for $\ZR(\K)$. A valuation $\nu\in\ZR(\K)$ has no refinement if and only if $\ZR(\k_\nu)$ contains only the trivial valuation. But any characteristic zero field contains non trivial valuations (any $p$-adic valuation on $\Q$, and any extension of it on a characteristic zero field). For $p>0$,  $\ZR(\F_p)$ contains only the trivial valuation, and this remains true for $\ZR(\K)$ when $\F_p\lgw \K$ is algebraic. Therefore, the maximal elements $\nu$ of $\ZR(\K)$ are the valuations for which $\k_\nu$ is an algebraic extension of $\F_p$.
\end{rem}

Now we can prove the analogue of Theorem \ref{clos_zariski} for the Inverse topology and the Weak Inverse topology:
\begin{thm}
Let $\K$ be a field and $\k$ a subfield of $\K$. We have:
$$\forall\nu\in\ZR(\K/\k),\ \ 
\ovl{\{\nu\}}^{I}=\ovl{\{\nu\}}^{WI}=\{\mu\in\ZR(\K/\k)\mid \mu \LEQ\nu\},$$
$$\forall\nu\in\ZR(\K),\ \ 
\ovl{\{\nu\}}^{I}=\{\mu\in\ZR(\K)\mid \mu \LEQ\nu\}.$$
\end{thm}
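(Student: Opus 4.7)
The plan is to verify the chain of inclusions
\[
\{\mu \mid \mu \LEQ \nu\} \;\subset\; \ovl{\{\nu\}}^{I} \;\subset\; \ovl{\{\nu\}}^{WI} \;\subset\; \{\mu \mid \mu \LEQ \nu\}
\]
in $\ZR(\K/\k)$. The middle inclusion is immediate from Proposition \ref{refi_top}: the WI topology is coarser than the I topology, so its closures are larger. The statement for $\ZR(\K)$ involves only the I topology and follows directly from the outer two arguments specialized to the basis $\{\U(x)\}$: one has $\mu \in \ovl{\{\nu\}}^I$ iff for every $x$, $\mu(x) > 0$ implies $\nu(x) > 0$, iff $\m_\mu \subset \m_\nu$, iff $\mu \LEQ \nu$ by Lemma \ref{comp_val}.

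For the first (easy) inclusion, assume $\mu \LEQ \nu$, so $\m_\mu \subset \m_\nu$ by Lemma \ref{comp_val}. If $\mu \in \V(x)$, there is $a \in \k$ with $x + a \in \m_\mu \subset \m_\nu$, hence $\nu \in \V(x)$; the analogous step with $a = 0$ handles the $\U(x)$-basis. Every basic open set containing $\mu$ therefore contains $\nu$, so $\mu \in \ovl{\{\nu\}}^I \subset \ovl{\{\nu\}}^{WI}$.

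The main step is the third inclusion, which I prove by contrapositive: assuming $\mu \not\LEQ \nu$, I exhibit $z \in \K$ with $\mu \in \V(z)$ but $\nu \notin \V(z)$. If $\mu$ and $\nu$ are incomparable, Lemma \ref{incomparable_val} supplies $f \in \K$ with $\nu(f) < 0 < \mu(f)$; then $\mu \in \V(f)$ via $a=0$, while for every $b \in \k$ one has $\nu(f+b) = \nu(f) < 0$, so $\nu \notin \V(f)$. If instead $\nu \SEQ \mu$, the composition $\mu = \nu \circ \bar\mu$ features a nontrivial valuation $\bar\mu$ on $\k_\nu$ that is trivial on the image of $\k$; pick any nonzero $\alpha \in \m_{\bar\mu}$ (necessarily $\alpha \notin \k$, since $\bar\mu$ vanishes on $\k$), and lift it to some $z \in V_\nu$ whose residue in $\k_\nu$ is $\alpha$. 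Then $z \in \m_\mu$ gives $\mu \in \V(z)$, while for every $b \in \k$ the residue of $z+b$ in $\k_\nu$ equals $\alpha + b \neq 0$, forcing $\nu(z+b) = 0$ and hence $\nu \notin \V(z)$.

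The main obstacle is precisely this refinement case: the WI-basis $\{\V(x)\}$ only detects elements whose residue in $\k_\nu$ lies in $\k$, so separating $\nu$ from a strict refinement $\mu$ requires producing an element of $\m_\mu$ whose residue genuinely escapes $\k$ — and the nontriviality of $\bar\mu$ together with its triviality on $\k$ is exactly what provides such an element.
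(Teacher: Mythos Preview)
Your proof is correct and follows essentially the same approach as the paper: the same chain of inclusions, the same use of Lemma \ref{incomparable_val} in the incomparable case, and the same underlying idea in the refinement case. The only difference is presentational: in the case $\nu\SEQ\mu$, the paper assumes $\mu\in\ovl{\{\nu\}}^{WI}$ and shows directly that any $x\in\m_\mu$ must lie in $\m_\nu$ (since the $a\in\k$ witnessing $\nu\in\V(x)$ is forced to be $0$), whereas you run the contrapositive and explicitly construct the separating element $z$ via the composition $\mu=\nu\circ\bar\mu$. These are inverse to one another and use the same ingredient---namely that an element of $\m_\mu\setminus\m_\nu$ has residue in $\k_\nu$ lying outside $\k$.
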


\begin{proof}
Let $\mu\LEQ\nu$, that is $V_\nu\subset V_\mu$.  Let $x\in \K$ such that $\mu\in\U(x)$. Then there is $a\in\K$ such that $\mu(x+a)>0$. Therefore $\nu(x+a)> 0$ since $\m_\mu\subset \m_\nu$, and $\nu\in\U(x)$. Therefore $\{\mu\in\ZR(\K/\k)\mid \mu \LEQ\nu\}\subset \ovl{\{\nu\}}^{WI}$.\\
Now, if $\mu$ and $\nu$ are incomparable, there is $x\in\K$ such that $\mu(x)>0$ and $\nu(x)<0$ by Lemma \ref{incomparable_val}. Therefore $\mu\in\U(x)$ and $\nu\notin \U(x)$. Hence $\mu\notin \ovl{\{\nu\}}^{WI}$.\\
Finally, let $\nu\LEQ\mu$, that is $\m_\nu\subset\m_\mu$. Let $x\in\m_\mu$, i.e. $\mu(x)>0$. Then $\mu\in\U(x)$. If $\mu\in \ovl{\{\nu\}}^{WI}$, then $\nu\in\U(x)$, and there is $a\in\k$ such that $\nu(x+a)>0$. Therefore, by hypothesis, $\mu(x+a)>0$ and necessarily $a=0$. This shows that $\m_\mu\subset \m_\nu$, hence $\m_\mu=\m_\nu$ and $\mu=\nu$. This proves the result for the Weak Inverse topology.\\
For the Inverse topology, the proof is similar.
\end{proof}

\begin{rem}
Let $\k\lgw\K$ be a field extension. The analogue of the action of $\Aut(G)$ over $\ZR_*(G)$, is the left action of $\Gal(\K/\k)$ over $\ZR(\K/\k)$ defined as follows:
$$\forall \s\in\Gal(\K/\k), \forall \nu\in\ZR(\K,\k),\forall x\in \K,\ (\s\cdot\nu)(x):=\nu(\s^{-1}(x)).$$
For instance, if $\K=\k(x_1,\ldots, x_n)$ where the $x_i$ are algebraically independent over $\k$, $\Gal(\K/\k)$ is the Cremona group $Cr_n(\k)$ of $\mathbb P^n(\k)$. This group contains the subgroup of monomial bijections of the form
$$(x_1,\ldots, x_n)\lgm (x_1^{a_{11}}x_2^{a_{12}}\ldots x_n^{a_{1n}}, \ldots, x_1^{a_{n1}}x_2^{a_{n2}}\ldots x_n^{a_{nn}})$$
where the matrix $(a_{ij})\in\Mat_n(\Z)$  is invertible in $\Mat_n(\Z)$. Therefore $\Aut(\Z^n)\subset Cr_n(\k)$. Moreover the action of $\Aut(\Z^n)$ on $\ZR(\Z^n)$ is induced by the action of $\Aut(\Z^n)$ on $\ZR(\K/\k)$ via the embedding introduced in Example \ref{embedding_Zn}.
\end{rem}


\subsection{Metric on the Zariski space in the countable case}

\begin{defi}\label{filtration}
Let $\K$ be a countable field, and let $\{F_n\}_{n\in\N}$ be a filtration of $\K$ by finite sets. That is, the $F_n\subset \K$ are finite, $F_n\subset F_{n+1}$ for every $n$, and $\bigcup\limits_n F_n=\K$. Moreover we assume that, for all $x\in F_n$, $x\neq 0$, we have $x^{-1}\in F_n$.\\
For $x\in \K$ we set $\haut(x):=\min\{n\in\N\mid x\in F_n\}$.
\end{defi}

\begin{defi} 
Let $\K$ be a countable field and $\{F_n\}$ be a filtration of $\K$ as in Definition \ref{filtration}.\\
 For $\nu$, $\mu\in \ZR(\K)$, $\nu\neq\mu$, we set
$$d(\nu,\mu)=\frac{1}{n}$$
if for every $x\in\K$ with $\haut(x)< n$, we have
$$\nu(x)>0\Longrightarrow \mu(x)>0$$
$$\nu(x)=0\Longrightarrow \mu(x)=0,$$
and there is $x\in\K$, with $\haut(x)=n$, such that one of these implications is not satisfied.\\
If $\nu=\mu$ we set
$$d(\nu,\mu)=0.$$
\end{defi}


\begin{thm}\label{patch_metric}
Let  $\K$ be a countable field. We have
\begin{itemize}
\item[i)] The function $d$ is an ultrametric on $\ZR(\K)$.
\item[ii)] The topology induced by $d$ coincides with the Patch topology on $\ZR(\K)$. In particular it does not depend on the choice of the filtration $\{F_n\}_{n\in\N}$.
\end{itemize}
\end{thm}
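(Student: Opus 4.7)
The plan is to mirror the proof of Proposition \ref{prop_metric} with valuations replacing preorders, using the sign function $\operatorname{sgn}\nu(x)\in\{-1,0,1\}$ as the analogue of $\nu_{\preceq}(u)$. The first (and conceptually key) step is to rewrite the condition in the definition of $d$ in a manifestly symmetric form. Since the filtration $\{F_n\}$ is closed under taking inverses of nonzero elements, if $x\in F_n\setminus\{0\}$ then $x^{-1}\in F_n$. Hence the two implications ``$\nu(x)>0\Rightarrow\mu(x)>0$'' and ``$\nu(x)=0\Rightarrow\mu(x)=0$'' for all $x$ with $\haut(x)<n$ together force $\operatorname{sgn}\mu(x)=\operatorname{sgn}\nu(x)$ for all such $x$ (the case $\nu(x)<0$ follows by applying the first implication to $x^{-1}$). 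Thus $d(\nu,\mu)\le 1/n$ iff $\operatorname{sgn}\nu(y)=\operatorname{sgn}\mu(y)$ for every $y\in F_{n-1}$, a symmetric condition.

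From this reformulation, i) is immediate: $d$ is non-negative and symmetric by construction, reflexivity holds by definition, and for the ultrametric inequality, if $d(\nu_1,\nu_2)\le 1/n$ and $d(\nu_2,\nu_3)\le 1/n$ then the sign functions agree with $\nu_2$'s on all of $F_{n-1}$, hence agree with each other, giving $d(\nu_1,\nu_3)\le 1/n$.

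For ii), I would show both inclusions between bases. First, for $\nu\in\ZR(\K)$ and $n\in\N^*$, the open ball $B(\nu,1/n)$ equals
$$\bigcap_{\substack{x\in F_{n-1}\\ \nu(x)>0}}\U(x)\ \cap\ \bigcap_{\substack{x\in F_{n-1}\\ \nu(x)<0}}\U(x^{-1})\ \cap\ \bigcap_{\substack{x\in F_{n-1}\setminus\{0\}\\ \nu(x)=0}}\bigl(\O(x)\cap\O(x^{-1})\bigr),$$
which is a finite intersection of Patch-open sets because $F_{n-1}$ is finite; hence every $d$-open ball is Patch-open. Conversely, given any $x\in\K$ and any $\nu\in\U(x)$ (respectively $\nu\in\O(x)$), set $n=\haut(x)+1$; then for every $\mu\in B(\nu,1/n)$ we have $\operatorname{sgn}\mu(x)=\operatorname{sgn}\nu(x)$, which forces $\mu(x)>0$ (respectively $\mu(x)\ge 0$). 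Thus $B(\nu,1/n)\subset\U(x)$ (respectively $\subset\O(x)$), showing that every generator of the Patch topology is open for $d$.

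I do not expect any real obstacle here: the only subtle point is the symmetric reformulation of the defining condition of $d$, which relies essentially on the assumption in Definition \ref{filtration} that $F_n$ is closed under inverses. Once that is observed, everything proceeds formally by the same bookkeeping as in Proposition \ref{prop_metric}, with $\{\O(x),\U(x),\O(x)\cap\O(x^{-1})\}$ playing the role of $\{\O_u,\U_u\}$.
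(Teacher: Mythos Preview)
Your proposal is correct and follows the same approach as the paper: show that each subbasic Patch-open set $\U(x)$, $\O(x)$ contains a $d$-ball around each of its points, and conversely express each $d$-ball as a finite intersection of such sets. Your sign-function reformulation is a welcome addition --- the paper asserts symmetry of $d$ as ``clear'' without justification, but symmetry genuinely requires the closure of the $F_n$ under inverses, exactly as you observe; one cosmetic point is that the set you write down (indexed by $F_{n-1}$) is the \emph{closed} ball $\{d\le 1/n\}$ rather than the open ball $\{d<1/n\}$, but this is harmless since in this discrete-valued ultrametric both families give the same neighborhood basis.
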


\begin{proof}
Clearly $d$ is non negative, reflexive and symmetric. The ultrametric inequality is straightforward to check.
Therefore we only need to prove ii).\\
\\
Now let $n\in\N^*$ and $\nu$, $\mu\in \ZR(\K)$. \\
For all $x\in \K$, let $\nu\in\U(x)$. Then
$$B\left(\nu, \frac{1}{\haut(x)}\right)\subset \U(x).$$
Indeed, if $\mu\in B\left(\nu, \frac{1}{\haut(x)}\right)$, we have
$$\nu(x)>0\Longrightarrow \mu(x)>0.$$
Hence the $\U(x)$ are open for the topology induced by $d$, and the topology induced by $d$ is finer than the  I-topology on $X$.\\
Now let $x\in\K$, $\nu\in\O(x)$, and $\mu\in B\left(\nu, \frac{1}{\haut(x)}\right)$. Then we have
$$\nu(x)\geq 0\Longrightarrow  \mu(x)\geq 0.$$
Therefore
$$B\left(\nu, \frac{1}{\haut(x)}\right)\subset \O(x),$$
and the topology induced by $d$ is finer than the  Z-topology on $X$. Hence, the topology induced by $d$ is finer than the Patch topology.\\
\\
On the other hand, we have that
 $\mu\in B(\nu,\frac{1}{n})$ if and only if, for every $x\in\K$ with $\haut(x)\leq n$,
$$\nu(x)>0\Longrightarrow \mu(x)>0,$$
$$\nu(x)=0\Longrightarrow \mu(x)=0.$$
 Therefore we have
 $$B\left(\nu,\frac{1}{n}\right)=\bigcap_{x,\haut(x)\leq n, \nu(x)>0}\U(x)\cap  \bigcap_{x,\haut(x)\leq n, \nu(x)=0}\left(\O(x)\cap\O(x^{-1})\right).$$
And this ball is open in the  Patch topology because this intersection is finite. Therefore both topologies coincide.
\end{proof}

\begin{cor} Let $\k\lgw\K$ be a field extension where $\k$ is a finite field and $\K$ is countable. Then
 the Zariski topology on $\ZR(\K/\k)^\k$ is a metric topology.
\end{cor}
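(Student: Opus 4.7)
The plan is to combine the two ingredients already proved in the excerpt. First, I would invoke Proposition \ref{refi_top2}: since $\k$ is finite, the Zariski and the Inverse topologies agree on $\ZR(\K/\k)^\k$. Because the Patch topology is, by definition, the coarsest topology refining both Zariski and Inverse, these three topologies coincide on $\ZR(\K/\k)^\k$ as soon as the Zariski and Inverse ones do. Hence the Zariski topology on $\ZR(\K/\k)^\k$ coincides with the subspace topology induced by the Patch topology on $\ZR(\K)$.

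Next I would apply Theorem \ref{patch_metric}: since $\K$ is countable, one can choose a filtration $\{F_n\}_{n\in\N}$ of $\K$ by finite subsets, stable under inversion, and build the associated ultrametric $d$ on $\ZR(\K)$. That theorem states that $d$ generates precisely the Patch topology on $\ZR(\K)$. Restricting $d$ to the subset $\ZR(\K/\k)^\k\subset\ZR(\K)$ yields an ultrametric on $\ZR(\K/\k)^\k$ whose induced topology is the subspace Patch topology.

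Combining these two facts, the restriction of $d$ to $\ZR(\K/\k)^\k$ induces exactly the Zariski topology on $\ZR(\K/\k)^\k$, which therefore is a metric (in fact ultrametric) topology.

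There is no real obstacle here: the statement is a direct corollary of the two preceding results, and the only thing to verify is that the subspace topology passes correctly through the identification. Since Patch, Zariski and Inverse all coincide on $\ZR(\K/\k)^\k$, this verification is immediate.
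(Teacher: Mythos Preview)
Your proof is correct and follows exactly the same approach as the paper, which simply states that the result ``comes from Theorem \ref{patch_metric} and Proposition \ref{refi_top2}.'' You have merely made explicit the intermediate step that once Zariski and Inverse coincide on $\ZR(\K/\k)^\k$, the Patch topology (being generated by the union of their subbases) also coincides with them there.
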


\begin{proof}
This comes from Theorem \ref{patch_metric} and Proposition \ref{refi_top2}.
\end{proof}

\subsection{Cantor sets}
We have the following lemma:
\begin{lem}\label{tot_disc_val}
Let $E\subset \ZR(\K/\k)$. Then $E$ is totally disconnected for the Patch topology.
\end{lem}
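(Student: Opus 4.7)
The plan is to mimic the proof of Lemma \ref{tot_disc} for preorders. Since total disconnectedness is inherited by subspaces, it suffices to prove that $\ZR(\K/\k)$ itself is totally disconnected for the Patch topology; equivalently, for any two distinct valuations $\nu_1, \nu_2 \in \ZR(\K/\k)$, I want to produce a partition of $\ZR(\K/\k)$ into two disjoint Patch-clopen sets, one containing $\nu_1$ and the other containing $\nu_2$.

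The key observation is that for every nonzero $f \in \K^*$, the sets $\O(f)$ and $\U(f^{-1})$ form a clopen partition of $\ZR(\K/\k)$. Indeed, $\O(f)$ is Z-open (hence P-open) by definition, and $\U(f^{-1})$ is I-open (hence P-open). They are disjoint, because $\nu(f) \geq 0$ and $\nu(f^{-1}) > 0$ would force $\nu(f) \geq 0$ and $-\nu(f) > 0$ simultaneously. They cover everything: for any $\nu$, either $\nu(f) \geq 0$ (so $\nu \in \O(f)$) or $\nu(f) < 0$, i.e.\ $\nu(f^{-1}) > 0$ (so $\nu \in \U(f^{-1})$). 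So it remains only to find a suitable $f$ separating $\nu_1$ and $\nu_2$.

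I would split into two cases. If $\nu_1$ and $\nu_2$ are incomparable, then Lemma \ref{incomparable_val} produces $f \in \K^*$ with $\nu_1(f) < 0$ and $\nu_2(f) > 0$, so $\nu_2 \in \O(f)$ and $\nu_1 \in \U(f^{-1})$. If instead they are comparable, say $\nu_1 \SEQ \nu_2$, then by Lemma \ref{comp_val} the inclusion $V_{\nu_2} \subsetneq V_{\nu_1}$ is strict, so there exists $f \in V_{\nu_1} \setminus V_{\nu_2}$; such an $f$ is nonzero and satisfies $\nu_1(f) \geq 0$ and $\nu_2(f) < 0$, placing $\nu_1 \in \O(f)$ and $\nu_2 \in \U(f^{-1})$. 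Either way, the clopen partition $\{\O(f), \U(f^{-1})\}$ separates $\nu_1$ from $\nu_2$.

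Intersecting this partition with $E$ yields a partition of $E$ into two disjoint Patch-clopen subsets separating any prescribed pair of distinct points. Hence no two distinct points of $E$ lie in the same connected component, so $E$ is totally disconnected. There is no genuine obstacle here: the argument is a direct transcription of Lemma \ref{tot_disc}, with the pair $(\O_u, \U_{u^{-1}})$ replaced by $(\O(f), \U(f^{-1}))$, and Lemmas \ref{comp_val} and \ref{incomparable_val} playing the role used to produce the separating element.
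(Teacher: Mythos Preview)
Your proof is correct and follows essentially the same approach as the paper: exhibit the clopen partition $\O(f)\cup\U(f^{-1})$ separating any two distinct valuations. The only difference is that the paper avoids your comparable/incomparable case split by observing directly that $V_{\nu_1}\neq V_{\nu_2}$ forces (after possibly swapping) $V_{\nu_2}\not\subseteq V_{\nu_1}$, giving $f\in V_{\nu_2}\setminus V_{\nu_1}$ in one stroke.
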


\begin{proof}
Let $\nu$, $\mu\in\ZR(\K/\k)$, $\nu\neq \mu$. Therefore $V_\nu\neq V_\mu$; for instance $V_\mu\nsubseteq V_\nu$. Thus  there is $x\in \K$ such that $\nu(x)<0$ and $\mu(x)\geq 0$. Thus, $\nu\in \U(x^{-1})$ and $\mu\in\O(x)$. But
$$\U(x^{-1})\cup \O(x)=\ZR(\K/\k) \text{ and }\U(x^{-1})\cap\O(x)=\emptyset.$$
This proves the claim.
\end{proof}
Therefore, when $\K$ is a countable field, $\ZR(\K/\k)$ is a metric compact totally disconnected space for the Patch topology. A natural question is to investigate when this is a Cantor space, or when a closed subset $E$ of $\ZR(\K/\k)$ is a Cantor space. This happens if and only if $\ZR(\K/\k)$ (or $E$) is a perfect space.
\begin{ex}
When $x$ is a single indeterminate and $\k$ is algebraically closed, $\ZR(\k(x),\k)$ is not a perfect space. Indeed, for every $\nu\in\ZR(\k(x),\k)$, $\nu$ being non trivial, there is a unique $y\in \K$ such that $\K=\k(y)$ and $\nu(y)>0$. Such a $y$ can be chosen as $x^{-1}$ or $x+a$  for some $a\in\k$. Moreover, for such a $y$, there is a unique valuation $\nu\in\ZR(\K/\k)$ such that $\nu(y)>0$, since $\k$ is algebraically closed. We denote by $\nu_{a}$  the unique valuation such that $\nu(x+a)>0$, and by $\nu_-$ the unique valuation such that $\nu_-(x)<0$. Therefore we have 
$$\ZR(\k(x)/\k)=\{\nu_0\}\cup\{\nu_-\}\bigcup_{a\in\k}\{\nu_{a}\}.$$
Moreover, $\{\nu_-\}=\U(x^{-1})$
and, for every $a\in\k$, 
$\{\nu_{a}\}=\U(x+a)$
are open sets.
\end{ex}


\begin{ex}
Let $\k$ be a finite or countable field, and $\K=\k(x,y)$ where $x$ and $y$ are algebraically independent over $\k$. Let $\nu$ be the monomial valuation defined by
$$\nu(x)=1 \text{ and } \nu(y)=1.$$
We have $\tdr\nu=1$.
Then we claim that $\nu$ is the limit of valuations  of transcendence degree 0. In particular the inequality about the degree in Corollary \ref{conv3} does not hold for valuations. To show this, we consider two cases (depending on whether  $\k$ is finite or not):\\
$\bullet$ If $\k$ is countable, we consider a filtration of $\k$ by finite sets $\k_n$, and we set
$$F_n:=\{P/Q\mid P,Q\in \k_n[x,y], \deg(P), \deg(Q)\leq n\}.$$
Since the $\k_n$ are finite, we may choose, for every integer $n$, $a_n\in\k$ such that $x+a_ny$ does not divide any nonzero homogeneous form of any polynomial $P\in\k_n[x,y]$ of degree $\leq n$. We denote by $\nu_n$ the monomial valuation defined by
$$\nu_n(y)=1 \text{ and } \nu_n(x+a_ny)=\sqrt{2}.$$ 
Then, for $P\in\k_n[x,y]$ of degree $\leq n$, we write
$$P=P_k(x,y)+P_{k+1}(x,y)+\cdots$$
where $P_j$ is a homogeneous polynomial of degree $j$, and $P_k\neq 0$. Then $\nu(P)=k$.\\
Now
$$P_j(x,y)=c_{j}y^j+P_{j,1}(x,y)(x+a_ny)$$
with $c_j\in \k$.
Since  $x+a_ny$ does not divide  any nonzero homogeneous form of any polynomial in $\k_n[x,y]$ of degree $\leq n$, we have that $c_j\neq 0$ as soon as $P_j\neq 0$. Since $P_{j,1}(x,y)$ is a homogeneous polynomial of degree $j-1$, we have $\nu_n(P_j)=j$ when $P_j\neq 0$, and $\nu_n(P)=k$. Therefore 
$$\nu_n(R)=\nu(R), \ \ \ \forall R\in F_n.$$
This shows that the sequence $(\nu_n)$ converges to $\nu$ for the Patch topology. We remark that the $\nu_n$ are rational valuations and $\nu$ is not (the transcendence degree of $\nu$ is 1). This shows that $\ZR(\k(x,y)/\k)^{\k}$ is not closed when $\k$ is infinite. Even more, this shows that the set of valuations of transcendence degree equal to  0 is not closed for the Patch topology.\\
$\bullet$  If $\k$ is finite, we consider a filtration of $\k$ by finite sets $\k_n$ as before, and we set
$$F_n:=\{P/Q\mid P,Q\in \k_n[x,y], \deg(P), \deg(Q)\leq n\}.$$
For every integer $n$, we consider an irreducible polynomial $P_n(T)\in\k[T]$ of degree $>n$. The polynomial $p_n(x,y):=y^{\deg(P_n)}P_n(x/y)$ is an irreducible  homogenous polynomial of degree $\deg(P_n)>n$. For every $f\in\k[x,y]$, we consider the $p_n$-expansion of $f$:
$$f=\sum_{i=0}^kf_ip_n^i$$
where $\deg(f_i)<\deg(p_n)$ for every $i$. Then we define the valuation $\nu_n$ by
$$\nu_n(f):=\min\{\nu(f_i)+\d_n i\}$$
where $\d_n>\deg(P_n)$ and $\d_n\in\R\setminus\Q$. Then $\nu_n\neq \nu$ and the sequence $(\nu_n)_n$ converges to $\nu$ for the Patch topology.
Moreover, in this case, $\k_{\nu_n}\simeq \k[T]/(P_n(T))$ is a non trivial algebraic extension of $\k$. Thus we have
$$\tdr\nu_n=0 \text{ and } \rr(\nu_n)=\dim_{\Q}(\Q+\Q\d_n)= 2.$$
But $\tdr\nu=1$, therefore, the set of valuations of transcendence degree equal to  0 is not closed for the Patch topology.\\
\\
Let us remark that this example can be easily extended to $\k(x_1,\ldots, x_n)$ where the $x_i$ are algebraically independent over $\k$, by considering the monomial valuation $\nu$ defined by
$$\nu(x_1)=\nu(x_2)=1$$
and choosing the $\nu(x_i)$, $i\geq 3$, such that
$1,\nu(x_3),\ldots, \nu(x_n)$
are $\Q$-linearly independent.
\end{ex}

\begin{rem}
In the previous example we have $\rk(\nu_n)=2$ while $\rk(\nu)=1$. Therefore the inequality about the rank in Corollary \ref{conv3} does not hold for valuations.
\end{rem}

We have the following lemma:

\begin{lem}\label{rat_closed}
Let $\k$ be a finite field and $\K$ be any field extension of $\k$. Then $\ZR(\K/\k)^\k$ is a compact subset of $\ZR(\K,\k)$ for the Zariski Topology. 
\end{lem}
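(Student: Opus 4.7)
My plan is to show $\ZR(\K/\k)^\k$ is Zariski-closed in $\ZR(\K/\k)$; compactness will then follow from the compactness of $\ZR(\K/\k)$ for the Zariski topology.

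The main idea is to exploit that a finite field $\k = \mathbb{F}_q$ is cut out in any extension by a single polynomial identity, namely $T^q - T = \prod_{a\in\k}(T-a)$. For any $\nu \in \ZR(\K/\k)$ and any $x \in V_\nu$, the residue $\bar x \in \k_\nu$ lies in $\k$ if and only if $\bar x$ is a root of $T^q - T$, i.e. if and only if $\nu(x^q - x) > 0$. Hence
\[
\ZR(\K/\k)^\k = \{\nu \in \ZR(\K/\k) \mid \forall x \in \K,\ \nu(x) \geq 0 \Rightarrow \nu(x^q - x) > 0\}.
\]

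Taking complements in $\ZR(\K/\k)$ (and noting that for $x \in \k$ the implication is trivially satisfied since $x^q - x = 0$ and $\nu(0) = \infty$), I would write
\[
\ZR(\K/\k) \setminus \ZR(\K/\k)^\k = \bigcup_{x \in \K \setminus \k} \bigl\{\nu \mid \nu(x) \geq 0 \text{ and } \nu(x^q - x) \leq 0\bigr\}.
\]
For each $x \in \K \setminus \k$, the element $x^q - x$ is nonzero, so the condition $\nu(x^q - x) \leq 0$ is equivalent to $\nu((x^q - x)^{-1}) \geq 0$. Therefore each set in the union equals $\O(x) \cap \O((x^q - x)^{-1})$, which is a finite intersection of Zariski-basic open sets, hence Zariski-open.

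Thus the complement is Zariski-open as a union of open sets, which shows $\ZR(\K/\k)^\k$ is Zariski-closed in the compact space $\ZR(\K/\k)$, and therefore Zariski-compact. The only substantive step is the polynomial characterization of $\k$ inside $\k_\nu$; everything else is a rewriting of one open set as another, so I do not anticipate any real obstacle.
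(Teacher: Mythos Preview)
Your proof is correct and is essentially the paper's argument: the paper writes the complement as $\bigcup_{y\in\K}\bigcap_{a\in\k}\bigl(\O(y+a)\cap\O((y+a)^{-1})\bigr)$, using directly that $\k$ is finite to make the inner intersection finite, while you package that same finite intersection into the single element $x^q-x=\prod_{a\in\k}(x-a)$ and write each piece as $\O(x)\cap\O((x^q-x)^{-1})$. The two open sets coincide, so the arguments are the same up to this cosmetic repackaging.
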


\begin{proof}
We remark that
$$\nu\notin\ZR(\K/\k)^{\k}\Longleftrightarrow \exists y\in\K,\ \forall a\in \k,\ \  \nu(y+a)=0.$$
Therefore
$$\ZR(\K/\k)^\k=\left(\bigcup_{y\in \K}\bigcap_{a\in\k}(\O(y+a)\cap\O((y+a)^{-1}))\right)^c$$
is closed if $\k$ is finite for the Zariski Topology by Proposition \ref{refi_top}. Therefore it is compact.
\end{proof}

Therefore we can formulate the following conjecture.\\

\noindent\textbf{Conjecture A.} Let $\k$ be a field, and let $\K$ be a countable field extension of $\k$ of transcendence degree at least 2.  Then $\ZR(\K/\k)^\k$ is a perfect set for the Patch Topology. Therefore, when $\k$ is finite, it is a Cantor set for the Patch and the Zariski Topologies.\\
\\
We give a proof of this conjecture in the following case:

\begin{thm}\label{cantorset1}
Let $n\geq 2$ and $\k$ be a countable field. 
 Then the  set $\ZR(\k(x_1,\dots,x_n)/\k)^{\k}$ is a totally disconnected perfect metric set for the Patch topology.
Moreover, if $\k$ is finite, it is a Cantor set for the Patch and the Zariski topologies.

\end{thm}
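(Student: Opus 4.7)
The proof breaks into four items: I need to show that $\ZR(\K/\k)^\k$ with $\K=\k(x_1,\ldots,x_n)$ is (i) metric, (ii) totally disconnected, (iii) perfect for the Patch topology, and, when $\k$ is finite, also (iv) compact. Parts (i) and (ii) are immediate from what is already in hand: (i) follows from Theorem \ref{patch_metric} applied to $\ZR(\K)$ (and restricted to the subset), while (ii) follows from Lemma \ref{tot_disc_val}. For (iv), when $\k$ is finite, Lemma \ref{rat_closed} shows that $\ZR(\K/\k)^\k$ is Zariski-closed in $\ZR(\K/\k)$, and Proposition \ref{refi_top2} states that the Zariski and Patch topologies coincide on $\ZR(\K/\k)^\k$; hence $\ZR(\K/\k)^\k$ is a Patch-closed subset of the Patch-compact space $\ZR(\K/\k)$, so it is compact. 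The substance of the theorem lies in (iii).

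For perfectness, I fix $\nu \in \ZR(\K/\k)^\k$ together with a basic Patch-neighborhood $U = \bigcap_{i=1}^r \O(f_i)\cap\bigcap_{j=1}^s \U(g_j)$ of $\nu$, and aim to exhibit $\mu \neq \nu$ in $U \cap \ZR(\K/\k)^\k$. The plan is to generalize the two explicit constructions carried out in the example preceding Lemma \ref{rat_closed}. When $\k$ is infinite but countable, one perturbs $\nu$ by introducing an extra irrational relation on value-group generators (after choosing an element $a \in \k$ such that a prescribed finite list of auxiliary polynomials is not divisible by a certain linear form), producing a rank-augmented monomial-type valuation whose residue field remains $\k$. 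When $\k$ is finite, one composes $\nu$ with a rank-one valuation defined via an irreducible polynomial of sufficiently large degree, again keeping the residue field equal to $\k$. In either case the new valuation is forced to agree with $\nu$ on the signs of all the $f_i,g_j$, hence to lie in $U$, while remaining manifestly distinct from $\nu$.

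The main obstacle is extending these explicit perturbations from the monomial case treated in the example to an arbitrary rational $\nu$, whose internal structure may involve higher rank, non-monomial leading terms, and intricate composition data. My plan is to use the finite set of constraints defining $U$ together with a fixed exhaustion of $\K$ by finite subsets (as in Definition \ref{filtration}) to reduce the construction to finitely many conditions: after choosing enough auxiliary generators, $\nu$ can be described, on the relevant finite piece, by finitely many "leading-term" data, and the hypothesis $n\geq 2$ guarantees that at least one degree of freedom remains in which to perturb $\nu$ while preserving its behavior on $F$ and keeping the residue field equal to $\k$. Once perfectness is secured, the final "Cantor set" conclusion for $\k$ finite follows from the standard fact that a non-empty totally disconnected compact metric perfect space is a Cantor set, and the transfer of the Cantor property from the Patch to the Zariski topology uses Proposition \ref{refi_top2}.
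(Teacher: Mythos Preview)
Your handling of (i), (ii), and (iv) matches the paper and is fine. The gap is in (iii).

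You correctly identify that the crux is perturbing an \emph{arbitrary} rational valuation $\nu$ on $\k(x_1,\ldots,x_n)$ inside a given basic Patch-neighborhood while keeping the residue field equal to $\k$, and you also correctly observe that the explicit constructions in the example preceding Lemma~\ref{rat_closed} only treat a very special monomial $\nu$. But your proposed bridge from that example to the general case is not an argument: phrases like ``after choosing enough auxiliary generators, $\nu$ can be described, on the relevant finite piece, by finitely many `leading-term' data'' and ``$n\geq 2$ guarantees that at least one degree of freedom remains'' do not name any actual construction. For a general rational $\nu$ (possibly of higher rank, with value group not commensurable with $\Z^n$, and not monomial in any coordinate system), there is no evident analogue of ``choose $a\in\k$ avoiding finitely many linear forms'' or ``pick an irreducible $P_n$ of large degree.'' The two example tricks genuinely do not generalize as stated.

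What the paper uses, and what your proposal is missing, is the MacLane--Vaqui\'e machinery of \emph{key polynomials}. One picks a key polynomial $T\in\k(x_1,\ldots,x_{n-1})[x_n]$ for $\nu$, and for $\delta>\nu(T)$ with $\delta\notin\Gamma_\nu\otimes_\Z\Q$ defines the augmented valuation $\nu_\delta$ via $T$-expansions: $\nu_\delta\bigl(\sum_l p_lT^l\bigr)=\min_l\{\nu(p_l)+l\delta\}$. Two nontrivial facts are then invoked from \cite{Va1} (and \cite{McL}): first, because $T$ is a key polynomial, $\nu(P)=\min_l\{\nu(p_l)+l\nu(T)\}$, which lets one check that $\nu_\delta$ stays in the prescribed neighborhood $\bigcap_j\U(b_j)$ for $\delta-\nu(T)$ small enough; second, the irrationality condition on $\delta$ forces $\nu_\delta$ to be rational. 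Neither of these ingredients appears in your plan, and without them you have no mechanism to produce the required $\mu\neq\nu$ in $U\cap\ZR(\K/\k)^\k$.
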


\begin{proof}
By Lemma \ref{tot_disc_val}, $\ZR(\k(x_1,\dots,x_n)/\k)^{\k}$ is totally disconnected. Since $\k$ is countable, the patch topology is a metric topology by Theorem \ref{patch_metric}. Therefore, we only need to prove that $\ZR(\k(x_1,\dots,x_n)/\k)^{\k}$ is a perfect  space.\\
\\
 Now assume that $\ZR(\k(x_1,\ldots,x_n)/\k)^\k$ is not perfect. Thus,  there exist $a_1,\dots,a_s,b_1,\dots,b_m \in \k(x_1,\dots,x_n)$ such that the set 
$$E:=\bigcap\limits_{i=1}^s\O(a_i)\cap \bigcap\limits_{j=1}^m\U(b_j)$$
is finite and no empty. Even if it means to add some points $a_i$ or $b_j$, we may assume that $E$ has exactly one element, that we denote by $\nu$.\\
 Since $\nu$ is rational, for all $i$, there exists $\lambda_i \in \k$ such that $\nu(\lambda_i +a_i)>0$. Therefore 
 $$\nu\in \bigcap\limits_{i=1}^s\U(a_i+\la_i)\cap \bigcap\limits_{j=1}^m\U(b_j)\subset A=\bigcap\limits_{i=1}^s\O(a_i)\cap \bigcap\limits_{j=1}^m\U(b_j).$$
Hence, we may assume that
$$E=\{\nu\}= \bigcap\limits_{j=1}^{m}\U(b_j).$$
Let $T$ be a key polynomial associated to $\nu$ with respect to the variable $x_n$. For every polynomial $P\in \k(x_1,\dots,x_{n-1})[x_n]$, we consider the $T$-expansion of $P$:
$$P=\sum \limits_{l=0}^dp_lT^l$$
with $\deg_{x_n}(p_l)<\deg_{x_n}(T)$ for all $l$. Let $G$ be an ordered group strictly containing $\G_\nu\otimes_\Z\Q$, and let $\d\in G\setminus \G_\nu\otimes_\Z\Q$ be such that $\d>\nu(T)$. We set
$\nu_\delta(P)=\min\limits_{0\leq l \leq d}\{\nu(p_l)+l\delta\}$.\\
Since $T$ is a key polynomial,  by \cite{Va1}[Lem 1.1], for such a  $P\in\k(x_1,\dots,x_{n-1})[x_n]$, we have
$$\nu(P)=\min\limits_{0\leq l \leq d}\{\nu(p_l)+l\nu(T)\}.$$
Let $r$ be the least integer such that $\nu(P)=\nu(p_r)+r\nu(T)$. Then, for $\d-\nu(T)>0$ small enough, we still have $\nu_\d(P)=\nu(p_r)+r\d$.\\
 Let $Q\in \k(x_1,\dots,x_{n-1})[x_n]$, whose $T$-expansion is $Q=\sum_{l=0}^eq_l T^l$, and let $s$  be the least integer such that $\nu(Q)=\nu(q_s)+s\nu(T)$. Assume that
$\nu(P/Q)>0$. Then for $\d-\nu(T)>0$ small enough, we have
$$\nu_\d(P/Q)=\nu_\d(P)-\nu_\d(Q)=\nu(p_r)-\nu(q_s)+(r-s)\d>0.$$
If $r-s>0$, we have $\nu_\d(P/Q)>\nu(P/Q)>0$. If $r-s<0$, we have $\nu(P/Q)>0$, hence $\frac{\nu(p_r)-\nu(q_s)}{s-r}>\nu(T)$. Therefore, for $\d<\frac{\nu(p_r)-\nu(q_s)}{s-r}$ we have $\nu_\d(P/Q)>0$.\\
 We write $b_j=\frac{P_j}{Q_j}$ for every $j$, where $P_j$, $Q_j\in \k(x_1,\ldots, x_{n-1})[x_n]$. Then, for $\d-\nu(T)>0$ small enough, we have $\nu_\d(P_j/Q_j)>0$. Moreover, since $\d\notin \G_\nu\otimes_\Z\Q$, the valuation $\nu_\d$ is a rational valuation by \cite[Th\'eor\`eme 1.12, Proposition 1.13]{Va1} (see also \cite[Theorem 12.1]{McL}), hence $\nu_\d\in E$. Finally, since $\d>\nu(T)$, $\nu_\d\neq\nu$. This contradicts the fact that $E$ contains only one element in $\ZR(\k(x_1,\ldots, x_n)/\k)^\k$. Thus $\ZR(\k(x_1,\ldots, x_n)/\k)^\k$ is a perfect space.\\
 Hence it is a Cantor set, when $\k$ is finite by Lemma \ref{rat_closed} (for both topologies by Proposition \ref{refi_top2}).
 \end{proof}

For $\ZR(\K/\k)$ we have the following conjecture.\\

\noindent\textbf{Conjecture B.} Assume that $\k$ is a finite or countable field and $\K$ a countable field extension of $\k$ of transcendence degree at least 2. Then $\ZR(\K/\k)$ is a Cantor set for the Patch topology.\\
\\
We prove here an important case of this conjecture:


\begin{thm}\label{cantorset2}
Assume that $\k$ is a finite or countable field and $\K$ a finitely generated field extension of $\k$ of transcendence degree at least 2. Then $\ZR(\K/\k)$ is a Cantor set for the Patch topology.
\end{thm}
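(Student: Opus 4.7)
Since $\k$ is at most countable and $\K/\k$ is finitely generated, $\K$ is countable. Combining Theorem~\ref{patch_metric}, Lemma~\ref{tot_disc_val}, and Samuel--Zariski compactness, $\ZR(\K/\k)$ is a compact, metric, totally disconnected space in the Patch topology, so it remains to show that $\ZR(\K/\k)$ is perfect: given $\nu\in\ZR(\K/\k)$ and a basic Patch-open neighborhood $U=\bigcap_{i=1}^s\O(a_i)\cap\bigcap_{j=1}^m\U(b_j)$ containing $\nu$, I will produce some $\nu'\in U\setminus\{\nu\}$. The argument splits according to the transcendence degree $\tdr\nu:=\tdr_\k(\k_\nu)$ of the residue field.

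\emph{Case A ($\tdr\nu\geq 1$).} The residues $\overline{a_i}\in\k_\nu$ for indices $i$ with $\nu(a_i)=0$ generate a finitely generated $\k$-subalgebra $R\subset\k_\nu$. By Noether normalization $R$ is integral over a polynomial subring $\k[y_1,\ldots,y_d]\subset R$; a non-trivial monomial valuation on $\k(y_1,\ldots,y_d)/\k$ non-negative on $\k[y_1,\ldots,y_d]$ extends through the integral closure and then to $\k_\nu$, yielding a non-trivial $\bar\nu\in\ZR(\k_\nu/\k)$ with $\bar\nu(\overline{a_i})\geq 0$. The composition $\nu':=\nu\circ\bar\nu$ strictly refines $\nu$, hence $\nu'\neq\nu$, and belongs to $U$: $\nu'(a_i)\geq 0$ holds automatically when $\nu(a_i)>0$, and equals $\bar\nu(\overline{a_i})\geq 0$ when $\nu(a_i)=0$, while $\nu'(b_j)>0$ follows from $\m_\nu\subset\m_{\nu'}$.

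\emph{Case B ($\tdr\nu=0$).} Now $\k_\nu/\k$ is algebraic. For each $i$ with $\nu(a_i)=0$, take $P_i\in\k[T]$ to be the minimal polynomial of $\overline{a_i}$ over $\k$; then $\nu(P_i(a_i))>0$, and the ultrametric argument applied to the leading monomial $a_i^{\deg P_i}$ (whose coefficient is a $\k^\times$-unit of valuation $0$) shows that $\mu(P_i(a_i))>0$ forces $\mu(a_i)\geq 0$ for any $\mu\in\ZR(\K/\k)$. Hence $U$ contains the smaller Patch neighborhood
$$U' := \bigcap_{i:\,\nu(a_i)>0}\U(a_i)\,\cap\,\bigcap_{i:\,\nu(a_i)=0}\U(P_i(a_i))\,\cap\,\bigcap_{j=1}^{m}\U(b_j),$$
consisting solely of $\U$-type constraints. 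Choose a transcendence basis $x_1,\ldots,x_n$ of $\K/\k$ with $n\geq 2$, set $L:=\k(x_1,\ldots,x_n)$, and apply the key-polynomial argument from the proof of Theorem~\ref{cantorset1} to the restriction $\nu_0:=\nu|_L$: the rationality hypothesis there was invoked only for the $\O$-to-$\U$ reduction, which has just been carried out differently here. Thus, given a key polynomial $T\in\k(x_1,\ldots,x_{n-1})[x_n]$ for $\nu_0$ and $\delta>\nu_0(T)$ close to $\nu_0(T)$ with $\delta\notin\G_\nu\otimes_\Z\Q$, the augmented valuation $\nu_0^\delta\in\ZR(L/\k)$ is distinct from $\nu_0$ and arbitrarily Patch-close to it. Extending $\nu_0^\delta$ along the finite algebraic extension $\K/L$ produces finitely many valuations on $\K$; for $\delta$ close enough to $\nu_0(T)$, one such extension $\nu^\delta$ is Patch-close to $\nu$ in $\ZR(\K/\k)$, whence $\nu^\delta\in U'$, and $\nu^\delta\neq\nu$ because $\nu^\delta|_L=\nu_0^\delta\neq\nu_0=\nu|_L$.

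The main difficulty is this last extension step in Case~B: verifying that some extension of the perturbed $\nu_0^\delta$ to $\K$ remains in $U'$, i.e.\ continues to satisfy the $\U$-constraints indexed by elements of $\K\setminus L$. This amounts to a sequential-lifting statement for the continuous finite-to-one restriction map $\ZR(\K/\k)\to\ZR(L/\k)$ between compact Hausdorff spaces, a fact established by the classical Krasner-type continuity-of-roots argument applied to the minimal polynomial over $L$ of a primitive element of the (separable part of the) extension $\K/L$.
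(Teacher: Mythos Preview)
Your approach differs substantially from the paper's. Case~A is correct and clean: composing $\nu$ with a non-trivial $\bar\nu\in\ZR(\k_\nu/\k)$ that is nonnegative on the finitely many residues $\overline{a_i}$ yields $\nu'=\nu\circ\bar\nu\in U\setminus\{\nu\}$, and your verification of the $\O$- and $\U$-conditions is right.

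Case~B has a genuine gap at precisely the step you flag. After the (correct and elegant) reduction to a neighborhood $U'$ built only from $\U$-conditions on finitely many elements $c_1,\ldots,c_r\in\K$, you perturb $\nu_0=\nu|_L$ to $\nu_0^\delta\neq\nu_0$ on $L=\k(x_1,\ldots,x_n)$ and then need an extension $\nu^\delta$ of $\nu_0^\delta$ to $\K$ with $\nu^\delta(c_\ell)>0$ for every $\ell$. Since the $c_\ell$ lie in $\K$ and not in $L$, nothing in the key-polynomial construction on $L$ controls them; what you are implicitly using is that the restriction map $r\colon\ZR(\K/\k)\to\ZR(L/\k)$ is \emph{open} for the Patch topology, so that perturbations of $\nu_0$ lift to perturbations of the given $\nu$ rather than of some other point of the fiber $r^{-1}(\nu_0)$. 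Compactness together with finiteness of fibers gives only that a subsequence of arbitrary extensions converges to \emph{some} extension of $\nu_0$, which is useless when that fiber has several elements. Your appeal to a ``classical Krasner-type continuity-of-roots argument'' does not close this: Krasner's lemma is a statement about \emph{complete} valued fields and has no off-the-shelf analogue for the Patch topology on the Riemann--Zariski space of a non-complete field. Openness of $r$ may well hold, but establishing it is a separate nontrivial task, not a citation.

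The paper's proof avoids the case split and the extension problem entirely, working directly inside $\K$ by commutative algebra. Given $E=\bigcap_i\O(a_i)\cap\bigcap_j\U(a'_j)\ni\nu$, it forms the finitely generated $\k$-algebra $A\subset V_\nu$ on the $a_i$, $a'_j$, and generators $x_k$ of $\K$ (adjusted so $\nu(x_k)\geq 0$), normalizes to $B$, and localizes at the center $\p$ of $\nu$. The Noetherian local ring $B_\p$ is not a valuation ring, so some $y\in\K$ has $y,y^{-1}\notin B_\p$; an integrality computation shows $yB_\p[y]+\p B_\p[y]\subsetneq B_\p[y]$ and symmetrically for $y^{-1}$, and valuations dominating primes containing these give distinct $\nu_1,\nu_2\in E$, separated by $\nu_1(y)>0>\nu_2(y)$.
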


\begin{proof}
As for Theorem \ref{cantorset1}, we only have to show that $\ZR(\K/\k)$ is a perfect space. Assume, aiming for contradiction, that this space is not perfect. Then there exist $a_1,\dots,a_s,a'_1,\dots,a'_m\in \K$ such that
$$E:=\bigcap\limits_{i=1}^s\O(a_i)\cap \bigcap\limits_{j=1}^m\U(a'_j)$$
is non empty and contains a unique element $\nu$. Let $x_1$, \ldots, $x_d$ be elements of $\K$ such that $\K=\k(x_1,\ldots, x_d)$. By replacing $x_k$ by $x_k^{-1}$, we may assume that $\nu(x_k)\geq 0$ for every $k$. We denote by $A$ the $\k$-algebra generated by the $x_k$, the $a_i$ and the $a'_j$.
 Then $A$ is an integral domain whose field of fractions is $\K$. We have that, for every $a\in A$, $\nu(a)\geq 0$. We set 
 $$I:=\{a\in A\mid \nu(a)>0\}.$$
 This is a prime ideal of $A$ containing the $a'_j$. We denote by $B$ the normalization of $A$. Then $B$ is Noetherian since $A$ is finitely generated over $\k$ (see \cite[Theorem 4.6.3]{HS} for instance). Moreover the ideal $\p:=\{b\in B\mid \nu(b)\geq 0\}$ is a prime ideal of $B$ lying over $I$.  \\ 
 We have that $\dim(B_\p)=\dim(A)=\tdr(\K)=n\geq 2$. Then $B_\p$ is not a valuation ring since $B_\p$ is Noetherian, therefore there exists $y\in\K$ such that $y\notin B_\p$ and $y^{-1}\notin B_\p$. \\
 We claim that $yB_\p[y]+\p B_\p[y]\neq B_\p[y]$. Indeed, if
 $$1=y(b_0+b_1y+\cdots+b_my^m)+p_0+p_1y+\cdots+p_my^m$$
 for some $b_k\in B_\p$ and $p_l\in \p$, we would have 
 $$(1-p_0)y^{-m-1}-(b_0+p_1)y^{-m}+\cdots-(b_{m-1}+p_m)y^{-1}-b_m=0$$
 and $y^{-1}$ would be integral over $B_\p$ since $1-i_0\notin \p$ is invertible in $B_\p$. But this is not possible since $B_\p$ is  integrally closed and $y^{-1}\notin B_\p$.\\
 In the same way, $y^{-1}B_\p[y^{-1}]+\p B_\p[y^{-1}]\neq B_\p[y^{-1}]$.\\
 Now let $\q_1$ (resp. $\q_2$) be a prime ideal of $B_\p[y]$ (resp. $B_\p[y^{-1}]$) containing $yB_\p[y]+\p B_\p[y]$ (resp. $y^{-1}B_\p[y^{-1}]+\p B_\p[y^{-1}]$). Then there exists a valuation ring $V_1$ in $\K$, whose maximal ideal $\m_{V_1}$ satisfies $\m_{V_1}\cap B_\p[y]=\q_1$ (see for example \cite[Theorem 6.3.2]{HS}). Therefore the associated valuation $\nu_1$ satisfies 
 $\forall q\in\q, \nu_1(q)>0$. Therefore $\nu_1(a_i)\geq 0$ for every $i$, and $\nu_1(a'_j)>0$ for every $j$.
 In the same way, there exists a valuation ring $V_2$ in $\K$, whose maximal ideal $\m_{V_2}$ satisfies $\m_{V_2}\cap B_\p[y^{-1}]=\q_2$, and its associated valuation $\nu_2$ satisfies  $\nu_1(a_i)\geq 0$ for every $i$, and $\nu_1(a'_j)>0$ for every $j$.
 But $\nu_1\neq \nu_2$ because $\nu_1(y)>0$ and $\nu_2(y^{-1})>0$. This contradicts the fact that $E$ is a singleton. Therefore $\ZR(\K/\k)$ is a perfect set. 
 \end{proof}

\FloatBarrier

\end{document}